\def \cA{\mathcal{A}}
\def \cC{\mathcal{C}}
\def \cD{\mathcal{D}}
\def \cB{\mathcal{B}}
\def \cS{\mathcal{S}}
\def \cF{\mathcal{F}}
\def \cG{\mathcal{G}}
\def \cJ{\mathcal{J}}
\def \P{\mathsf P}
\def \E{\mathsf E}
\def \N{\mathbb{N}}
\def \R{\mathbb{R}}
\def \ud{\mathrm{d}}
\def \e{\mathrm{e}}
\newcommand{\eps}{\varepsilon}
\newtheorem{theorem}{Theorem}[section]
\newtheorem{lemma}[theorem]{Lemma}
\newtheorem{proposition}[theorem]{Proposition}
\newtheorem{definition}[theorem]{Definition}
\newtheorem{remark}[theorem]{Remark}
\title[Nash equilibria for dividend distribution with competition ]{Nash equilibria for dividend distribution with competition}
\author[De Angelis, Gensbittel, Villeneuve]{Tiziano De Angelis, Fabien Gensbittel, St\'ephane Villeneuve}
\subjclass[2010]{91A15, 93E20, 91G50, 60J60, 35R35}
\keywords{Singular controls, nonzero-sum games, Nash Equilibrium, dividend problem, free boundary problems, randomised strategies}
\address{T.\ De Angelis: School of Management and Economics, Dept ESOMAS, University of Torino, Corso Unione Sovietica, 218 bis 10134, Torino, Italy; Collegio Carlo Alberto, Piazza Arbarello 8, 10122, Torino, Italy}
\email{\href{mailto:}{tiziano.deangelis@unito.it}}
\address{F.\ Gensbittel: Toulouse School of Economics, 1 esplanade de l’université 31000 Toulouse, France.}
\email{\href{mailto:}{fabien.gensbittel@tse-fr.eu}}
\address{S.\ Villeneuve: Toulouse School of Economics, 1 esplanade de l’université 31000 Toulouse, France.}
\email{\href{mailto:}{stephane.villeneuve@tse-fr.eu}}
\date{\today}
\numberwithin{equation}{section}
\begin{document}

\begin{abstract}
We construct Nash equilibria in feedback form for a class of two-person stochastic games of singular control with absorption, arising from a stylized model for corporate finance.
More precisely, the paper focusses on a strategic dynamic game in which two financially-constrained firms operate in  
the same market. The firms distribute dividends and are faced with default risk. The strategic interaction arises from the fact that if one firm defaults, the other one becomes a monopolist and increases its profitability. The firms choose their dividend distribution policies from a class of randomised strategies and we identify two types of equilibria, depending on the firms' initial endowments. In both situations the optimal strategies and the equilibrium payoffs are found explicitly. 
\end{abstract}

\maketitle

\section{Introduction}

\subsection{Motivation.} 
In this paper we consider a 2-player nonzero-sum stochastic game that arises from a stylized model of dividend distribution for two competing firms. We build on the mathematical framework of De Finetti’s classical dividend problem \cite{DeFinetti} which was formulated as a stochastic control problem by Jeanblanc and Shiryaev \cite{JS} and Radner and Shepp \cite{RS}. 

In \cite{JS} and \cite{RS} a firm's capital evolution is described by an arithmetic Brownian motion (aBm) which, for the sake of argument, we denote by $(X_t)_{t\ge 0}$. The constant drift represents the firm’s profitability per unit time. The Brownian motion carries the uncertainty. External financing is costly, thus creating a precautionary demand for cash. The firm's manager decides on the distribution of dividends to the shareholders with the goal of maximizing the total discounted amount of dividends paid until the firm's default. The firm lives until its default time so that there is a tradeoff between maximizing dividend payments at each time and keeping the firm alive for as long as possible.

The dividend problem is perhaps the most popular application of stochastic control theory in corporate finance. One of the reasons is that it yields a stock price model which endogenises the firm's valuation---denoting $\hat v$ the value function of the control problem, the stock price dynamics reads $S_t=\hat v(X_t)$. This is not the case, for example, in the celebrated Black and Scholes model, where the stock price dynamics is given exogenously with no real connection to the firm's financial performance.

There exists an extremely vast literature around the dividend problem, which covers, for example, more general jump-diffusive dynamics of $(X_t)_{t\ge 0}$, multi-dimensional models with stochastic interest rates, constraints on the admissible dividend distributions, cash injections, etc. Accounting for these developments is difficult because of the rapidly expanding field and it falls outside the scope of our work. We refer the reader to the review paper \cite{Avanzi} for a detailed overview of the field until 2010 and to the introduction of \cite{BDeA} for an overview of some recent results. 

From our point of view, an obvious limitation of the classical dividend distribution model is that a single firm operates in isolation in a market with no other economic agent. Our paper is 
the first one to study how the presence of a competitor will impact the optimal dividend distribution when two firms interact strategically.

\subsection{A summary of the model and economic insight.}

We consider two identically-efficient firms acting on a single-good market in which the demand for the good is random. Both firms have a capital evolution which is driven by the same aBm (i.e., with the same drift and diffusion coefficients). Each firm's manager chooses how to distribute dividends to the shareholders and dividends are subtracted from the firms' capital. Each firm lives until its default time. 

Firms {\em are not} identical, because they may have different initial endowments and they can choose different dividend policies, say $(L_t)_{t\ge 0}$ and $(D_t)_{t\ge 0}$. In particular, we denote by $(X^L_t)_{t\ge 0}$ and $(Y^D_t)_{t\ge 0}$ the capital net of dividend payments for the first and the second firm, respectively. These are the firms' controlled dynamics. Although all ingredients will be formally introduced in Section \ref{sec:setup}, it is worth mentioning here that we allow for the most general class of dividend distribution policies by considering {\em singular} controls (i.e., non-decreasing, right-continuous processes). 

At time zero, the two firms are in duopoly but if/when one of the two firms defaults, the surviving firm becomes a monopolist with an increase of its profitability. 
We model the duopoly/monopoly transition with a change of drift in the aBm of the surviving firm: the drift changes from an initial value $\mu_0$ to a larger value $\hat \mu$. In this context, each firm's manager must decide how to optimally distribute dividends in order to maximize the total discounted amount of dividends paid until the default time of one of the two firms. The prospect of becoming a monopolist and the presence of a rival exacerbate the standard trade-off between exerting controls (paying dividends) and keeping a high level of cash reserves. 

Despite the perceived simplicity of our model (firms' capitals have the same dynamic evolution in the absence of dividend payments) we observe non-trivial deviations from the optimal dividend policy of the De Finetti’s dividend model. In particular, we find a Nash equilibrium in which at least one of the two firms' dividend distribution policy depends on a {\em moving} threshold. The value of the threshold depends on the relative amount of capital held by one firm as compared to the other. This is in contrast with the classical solution of the single-firm dividend problem, where the threshold is fixed. Moreover, our equilibrium shows that the firm with the largest initial endowment may use this initial asymmetry to its own advantage and induce the poorer firm to forego the option of becoming monopolist. 
In a nutshell, we conclude that cash-rich firms are less likely to pay dividends, because their shareholders have more interest in waiting for competitors to go bankrupt than in receiving early dividends. This observation connects us to an emerging literature concerned with the study of cash-rich firms who may engage in predatory strategies to drive poorer rivals out of the market and thus benefit from monopolistic profits (particularly in the digital economy). In the deep-pocket theory of predation (see \cite{Kobayashi} for a general presentation), predatory behavior may arise when a firm adopts a strategy intended to induce the exit from the market of a financially constrained competitor by depletion of its resources. The Nash equilibrium we find in our model goes in this direction. 

We do not claim that our Nash equilibrium is unique but it holds for all choices of the model's parameters (positive drift and diffusion coefficients of the aBm, initial firms' endowments, discount rate). Moreover, the dividend distribution policies at equilibrium and the firm's equilibrium payoffs are essentially explicit (see Theorem \ref{thm:NE1} and Proposition \ref{prop:SC}).

\subsection{A summary of our mathematical results.}

We construct Nash equilibria for a two-player nonzero-sum game of singular controls with an exogenous absorbing boundary for the state-dynamics of each player (absorption is due to default). The game is played in continuous time and the underlying dynamics is driven by a Brownian motion. We assume that players have complete information about the dynamics of the system (including the initial states and all parameters involved), about the class of admissible controls and about the game's payoffs. We also assume that each player can observe her opponent's actions. Players' admissible strategies are drawn from a class of randomised strategies introduced in Definition \ref{def:random_strategies}. 

In Section \ref{sec:strategies}, we carefully relate randomised (and pure) strategies to randomised (and pure) controls and to the resulting payoffs. Not all pairs of randomised strategies produce a well-defined dynamics of our system and therefore we need to introduce a notion of {\em control-inducing} pairs (cf.\ Definition \ref{def:theta_r}), related to the existence of a suitable fixed point in the space of paths.

We find two types of Nash equilibria, depending on the initial endowment/state of the two firms/players. Both equilibria are constructed explicitly relying upon free boundary methods and stochastic calculus. When the initial endowments of the two firms are different, we solve two interconnected free boundary problems. The free boundary for the ``poorer'' firm is constant whereas the free boundary of the ``richer'' firm moves with the state-variable associated to the other firm's level of capital (the free boundary is monotone and its inverse moves with the difference in the amount of capital held by the two firms). Along the equilibrium trajectory, the poorer firm acts as if it were alone in the single-good market by following the classical optimal policy from \cite{JS} and \cite{RS}. The richer firm instead controls the level of its cash reserves in order to stay ahead of its rival, making sure that the other firm defaults first. It is important to notice that, although we consider deviations from equilibrium in the general class of randomised strategies, at equilibrium the richer firm uses a {\em pure strategy} while the poorer firm uses a {\em pure control} (cf.\ Definitions \ref{def:random_strategies} and \ref{def:random_controls}, respectively). We emphasize that this structure is {\em not} due to some sort of restriction that we impose on players' action sets but rather it is one particular instance of a game in randomised strategies that admits an equilibrium in a smaller class of strategies.   

The use of randomised strategies is instead fundamental in order to construct a symmetric equilibrium when the two firms have the same initial endowment. The firms are completely symmetric at time zero. As soon as one of them distributes dividends, the symmetry is broken and the game is back into the previous asymmetric situation. The firm that makes the first dividend payment effectively accepts to be in a disadvantaged position compared to the other one. However, waiting forever will eventually lead both firms to default and yield zero payoff for both. In this context, we construct an equilibrium in which firms start making dividend payments at a randomised stopping time and we characterise explicitly the optimal intensity of stopping. After the first dividend payment is performed by one of the two players, further actions of each player are obtained following ideas similar to those from the asymmetric setting.

\subsection{Our contribution to the literature.}
 
There exists an abundant literature on single-agent singular control problems, dating back to seminal work by, e.g., Bather and Chernoff \cite{BaCh}, Benes et al.~\cite{Be}, Karatzas \cite{Ka} and many others. As already explained, the two papers \cite{JS} and \cite{RS} set the benchmark case for the analysis of corporate cash management in continuous-time, based on the work of De Finetti's \cite{DeFinetti}. Starting from this framework we investigate the impact of introducing competition in problems of singular control with absorption. 

From a mathematical perspective, the literature on nonzero-sum stochastic games of singular control is still in its infancy. Kwon and Zhang \cite{Kw} find Markov perfect equilibria in a game of competitive market share control, in which each player can make irreversible investment decisions via singular controls as well as deciding to strategically exit the market. 
De Angelis and Ferrari \cite{DeAFe2} and Dammann et al.\ \cite{DRV} obtain Nash equilibria in the class of Skorokhod-reflection policies for a nonzero-sum game where two players control the same one-dimensional state dynamics. In \cite{DeAFe2} an equilibrium is found by establishing a connection between the nonzero-sum game of monotone controls and a nonzero-sum stopping game. In \cite{DRV} an equilibrium is found by solving a free boundary problem. There are two important differences of our work compared to \cite{DeAFe2}, \cite{DRV} and \cite{Kw}. First, the two players in those papers control the same dynamics and no default may occur (in \cite{Kw} players may decide to exit the game and therefore the transition from duopoly to monopoly occurs {\em only} because of optimality considerations). In our paper instead each player controls her own dynamics and default may occur also in the absence of control (actually, controlling will increase the probability of default). Second, each player in \cite{DeAFe2} and \cite{DRV} chooses a point on the real line and exerts control in order to reflect the dynamics at that point (one player pushes the dynamics upwards and the other one pushes it downwards); equilibria in \cite{Kw} are sought in a class of {\em barrier} strategies, which is close in spirit to the classical Skorokhod reflection. Instead, we have 2-dimensional dynamics and we {\em do not} restrict our class of strategies to those triggered by thresholds (i.e., we determine our Nash equilibria by allowing deviations in a general class of randomised strategies).

Another related paper is by Ekstr\"om and Lindensj\"o \cite{EL}. They study an $N$-player competitive game in an extraction problem from a common resource with Brownian dynamics. Ekstr\"om and Lindensj\"o find a Nash equilibrium for a class of Markov strategies of bang-bang type with regular controls. That is, each player extracts at the maximum (bounded) rate when the controlled dynamics is above a certain critical value. Since the game is symmetric, it turns out that all players act simultaneously (i.e., they all choose the same critical value). All players control the same dynamics and therefore they all default at the same time once the resources are depleted. Again, our setup is different because the dynamics in \cite{EL} is 1-dimensional and equilibria are sought in the class of threshold policies.

Finally, Steg \cite{Steg-a} studied $N$-player games of irreversible investment in the context of capital accumulation (without default). He finds equilibria in {\em open-loop} strategies, in the sense that players choose their investment policies based only on information contained in a commonly observed filtration (e.g., that generated by a commonly observed stochastic process). Differently from our notion of strategy (Definition \ref{def:random_strategies}), in \cite{Steg-a} players do not react to possible deviations of their opponents from equilibrium trajectories. They only play what we call {\em pure controls} in Definition \ref{def:random_controls}. Moreover, in his Introduction, Steg observes that ``{\em even to specify sensible feedback strategies poses severe conceptual problems}'' in games with singular controls (this was brought up by Back and Paulsen \cite{BP} with reference to an earlier paper by Grenadier \cite{Gr}). In this sense, our paper (and in particular Section \ref{sec:strategies}) provides a rigorous framework for the study of stochastic games of singular controls with feedback strategies.

\subsection{Structure of the paper.}

The paper is organised as follows. In Section \ref{sec:setup} we set up the problem and recall some useful facts about the classical dividend problem. In Section \ref{sec:strategies} we introduce the class of admissible actions for the two firms and the associated payoffs. We study in detail the relationship between the notions of randomised/pure strategies/controls, control-inducing pairs and Nash equilibria (cf., in particular, Lemma \ref{lem:deviations}). In Section \ref{sec:NEasym} we construct an equilibrium for the game with firms having different initial endowments. In Section \ref{sec:sym} we construct a symmetric equilibrium for firms with the same initial endowment. In Section \ref{sec:conclusions} we briefly discuss limitations and possible extensions of our model. A short Appendix with a small technical result and a summary of frequently used notations completes the paper.

\section{Problem setting.}\label{sec:setup}
We have two firms operating on the same market and whose cash reserves increase at a rate $\mu_0>0$ but are subject to a volatility $\sigma>0$. A firm defaults if its cash reserves drop below zero. In that case, the surviving firm becomes a monopolist, resulting in a higher rate of increase of its cash reserve, i.e., $\hat \mu>\mu_0$. For simplicity we keep the same volatility also for the monopolist but, as it will become clear later, a change of volatility brings no substantial difference in our analysis.

More formally, we consider the space $\Omega=C_0([0,\infty))$ of continuous functions $\varphi:[0,\infty)\to\R$ with $\varphi(0)=0$  endowed with the $\sigma$-algebra $\cF$ generated by all finite-dimensional cylinders (cf., e.g., \cite[Ch. 2.2]{KS}) and the Wiener measure $\P$. The canonical process $(B_t)_{t\ge 0}$ on $\Omega$ is a standard $1$-dimensional Brownian motion, and $(\cF_t)_{t\ge 0}$ with $\cF_t\coloneqq\sigma(B_s, 0\leq s \leq t)$ is the canonical {\em raw} filtration. 
We denote by $(X_t)_{t\ge 0}$ and $(Y_t)_{t\ge 0}$ the cash reserve dynamics of the first and second firm, respectively. Then, for $t\ge 0$ and $x,y\in[0,\infty)$, we have
\begin{equation}\label{eq:XY}
X_t=x+\mu_0 t+\sigma B_t -L_t\quad\text{and}\quad Y_t=y+\mu_0 t+\sigma B_t -D_t,
\end{equation}
where $L_t$ is the cumulative amount of dividends paid by the first firm up to time $t$, and $D_t$ is the analogue for the second firm. We will often use $X^L$ and $Y^D$ to emphasise the dependence of the processes on their controls.

As explained in the Introduction, we are interested in understanding predatory strategic behavior. That behaviour would typically require a firm to be able to react to their competitor's actions. For that reason, we assume that both firms can observe the Brownian process $B$ and the cumulative amount of dividends paid by the other firm up to time $t$. As a result, each firm can compute the cash reserve of the other firm. We also assume that each firm can act strategically and adjust dynamically the dividend payments in reaction to both the Brownian fluctuations and, crucially, to the past dividend payments made by the other firm. Moreover, firms can randomize their dividend payments across different strategies. Before rigorously defining randomised strategies, which are used to determine how the firms select the processes $L$ and $D$, we first define the class from which processes $L$ and $D$ are actually drawn. The formal definition of randomised strategies together with the associated payoffs will be given in Section \ref{sec:strategies}.

\begin{definition}[Admissible controls]\label{def:div}
A pair of processes $(L_t,D_t)_{t\ge 0}$ is called a {\em pair of admissible controls} if
$L$ and $D$ are non-decreasing, adapted to $(\cF_t)_{t\ge 0}$ and right-continuous, with\footnote{Here we formally denote by $L_{0-}$ and $D_{0-}$ the values of the processes before a possible jump at time zero.} $L_{0-}=D_{0-}=0$. Moreover, letting $(x)^+\coloneqq \max\{0,x\}$, it must hold
\begin{equation}\label{eq:def0}
L_t-L_{t-}\le (X^L_{t-})^+\quad\text{ and }\quad D_t-D_{t-}\le (Y^D_{t-})^+,\quad\text{for all $t\ge 0$, and all $\omega \in \Omega$.}
\end{equation}
\end{definition}
\medskip

Condition \eqref{eq:def0} ensures that the firms cannot pay dividends in excess of their cash reserve. 
We denote default times by $\gamma_X$ and $\gamma_Y$, with\footnote{Condition \eqref{eq:def0} could be replaced by the weaker condition $X^L_{\gamma_X}=0$ on $\{ \gamma_X<\infty\}$, or equivalently $L_t-L_{t-}\le X^L_{t-}$ for all $t\leq \gamma_X$, as only the trajectory up to $\gamma_X$ is relevant. Our choice is motivated by Definition \ref{def:random_strategies} of  strategies as functionals on the canonical space, in which for simplicity we avoid to define strategies only up to a stopping time.}
$\gamma_X=\inf\{t\ge 0: X^L_t\le 0\}$ and $\gamma_Y=\inf\{t\ge 0: Y^D_t\le 0\}$.
Finally, we denote $\P_{x,y}(\,\cdot\,)=\P(\,\cdot\,|X_{0-}=x,Y_{0-}=y)$.

Given a pair $(L,D)$ of admissible controls, the expected payoffs $\cJ^1$ and $\cJ^2$, for the first and second firm, respectively, read
\begin{equation}\label{eq:J12}
\begin{aligned}
&\cJ^1_{x,y}(L,D):=\E_{x,y}\Big[\int_{[0,\gamma_X\wedge\gamma_Y]}\e^{-r t}\ud L_t+1_{\{\gamma_Y<\gamma_X\}}\e^{-r\gamma_Y}\hat v(X^L_{\gamma_Y})\Big],\\
&\cJ^2_{x,y}(D,L):=\E_{x,y}\Big[\int_{[0,\gamma_X\wedge\gamma_Y]}\e^{-r t}\ud D_t+1_{\{\gamma_X<\gamma_Y\}}\e^{-r\gamma_X}\hat v(Y^D_{\gamma_X})\Big].
\end{aligned}
\end{equation}
Here, $r>0$ is a discount rate and $\hat v$ is the value function of the classical dividend problem with cash reserves growing at the rate $\hat \mu$. That, is
\begin{equation}\label{eq:JS}
\hat v(x):=\sup_{\xi}\E_{x}\Big[\int_{[0,\gamma_C]}\e^{-rt }\ud \xi_t\Big],
\end{equation}
with underlying dynamics given by 
$C^\xi_t=x+\hat \mu t+\sigma B_t-\xi_t,\quad t\ge 0$,
and with $\gamma_C=\inf\{t\ge 0: C^\xi_t\le 0\}$. The supremum is taken over all admissible controls, according to Definition \ref{def:div}. Here we emphasise the dependence of the problem's structure on the drift of the underlying process, because we will later use results for the dividend problem when the drift is either $\mu_0$ or $\hat \mu$. In particular, we will use the notation $\hat v(x)=w(x;\hat \mu)$ and $C^\xi=C^{\hat \mu;\xi}$, when convenient. An account of useful facts about the classical dividend problem will be provided below in Section \ref{sec:divid}.

The integral term in each one of the two payoffs in \eqref{eq:J12} is the discounted value of the cumulative dividends paid by the firm until both firms are active. At the (random) time $\gamma_X\wedge\gamma_Y$ one or both of the two firms goes bankrupt and the surviving firm becomes a monopolist with a larger cashflow rate $\hat \mu$. 
Notice that if $\gamma_X=\gamma_Y$ then no firm survives and the continuation payoff for both players is zero. This is induces no loss of generality because the monopolist's payoff is equal to zero when the initial cash reserve is zero, i.e., $\hat v(0)=0$.
If instead $\gamma_X<\gamma_Y$, the remaining firm (i.e., firm $2$) at time $\gamma_X$ is faced with the classical dividend problem but with an initial cash reserve $Y^D_{\gamma_X}$. Hence, the payoff at time $\gamma_X$ reads $\hat v(Y^D_{\gamma_X})$. Analogous considerations justify the continuation payoff $\hat v(X^L_{\gamma_Y})$ for firm $1$ in the event $\gamma_Y<\gamma_X$.

In what follows we will unambiguously refer to the first and second firm as first and second player, respectively.
By the symmetry of the set-up it is clear that the player with the largest initial cash reserve has an advantage on her opponent with a lower risk of being in liquidation ex-ante. We will show that this allows a rather explicit construction of a Nash equilibrium. Furthermore, in the completely symmetric situation in which $x=y$, the use of randomised strategies will be key to the construction of a {\em symmetric} equilibrium.

\subsection{Useful facts about the classical dividend problem.}\label{sec:divid}
Here we recall a few well-known results concerning the classical dividend problem for a generic drift $\mu>0$ of the cash reserve. We start by introducing notations which will be used throughout the paper. 
Given a set $A\subset\R^2$ (or $A\subset\R$) we denote its closure by $\overline A$. Given a function $f:\R^2\to\R$ and open set $A\subset\R^2$ (or $f:\R\to\R$ and $A\subset\R$) we write $f\in C^k(A)$ for $k\in\N$ to indicate that $f$ is $k$ times continuously differentiable in $A$. We write $f\in C^k(\overline A)$ to indicate that the function $f$ with all its $k$ derivatives admit a continuous extension to the boundary $\partial A$. Given two open sets $A\subsetneq B$ in $\R^2$ (or in $\R$), letting $E:=B\setminus \overline A$, for $k\in\N$ we use the notation $f\in C^k(\overline A\cup \overline E)$ to indicate that $f\in C^k(\overline A)\cap C^k(\overline E)$ with derivatives which may be discontinuous across the boundary $\partial A$. 

In the notation of \eqref{eq:JS} we consider a generic value function $w(x;\mu)$ when the underlying dynamics $C^{\mu;\xi}$ has drift $\mu>0$ (so that for \eqref{eq:JS} we have $\hat v(x)=w(x;\hat \mu)$). All the results listed here can be found, for instance, in \cite[Ch.\ 2.5.2]{Schmidli}. 

It is well-known that the optimal dividend policy in the classical dividend problem is of the form
\begin{equation}\label{eq:xi*}
\xi^*_t=\xi^*_t(\mu):=\sup_{0\le s\le t} \Big(x-a_*+\mu s+\sigma B_s\Big)^+,\quad \xi^*_{0-}=0,
\end{equation}
where $a_*=a_*(\mu)$ is an optimal boundary and dividends are paid so that the cash reserve process $(C^{\mu;\xi^*}_t)_{t\ge 0}$ is reflected downwards at $a_*$. The solution is generally constructed by showing that the value function $w$ belongs to the class $C^2([0,\infty))$ and that the pair $(w,a_*)$ is the unique solution of the free boundary problem
\begin{equation}\label{eq:fbpdiv}
\begin{aligned}
&\tfrac{\sigma^2}{2}w''(x;\mu)+\mu w'(x;\mu)-rw(x;\mu)=0,\quad x\in(0,a_*(\mu)),\\
&\tfrac{\sigma^2}{2}w''(x;\mu)+\mu w'(x;\mu)-rw(x;\mu)\le 0,\quad x\in[a_*(\mu),\infty),\\
&w'(x;\mu)\ge 1\:\text{for all}\: x\in[0,\infty),\\
&w'(x;\mu)> 1\iff x\in(0,a_*(\mu)),\\
&w(0;\mu)=0.
\end{aligned}
\end{equation}
The value of the optimal boundary $a_*(\mu)$ is determined by the smooth-pasting condition 
\begin{equation}\label{eq:smoothp}
w''(a_*(\mu);\mu)=0
\end{equation} 
and it can be calculated explicitly as $a_*(\mu)=2(\beta_1-\beta_2)^{-1}\log( -\beta_2/\beta_1)$ where $\beta_1=\beta_1(\mu)>0>\beta_2(\mu)=\beta_2$ are the two roots of the equation $\frac{\sigma^2}{2}\beta^2+\mu\beta-r=0$. In order to simplify the notation, we omit the dependence on $\mu$ from $w$ and $a_*$ when no confusion shall arise. 

For $x\in(0,a_*)$ the expression for $w$ reads
\begin{equation}\label{eq:w}
w(x)=C\big(e^{\beta_1x}-e^{\beta_2x}\big),
\end{equation}
where $C=C(\mu)>0$ is a constant that can be determined explicitly. 
Finally, we notice that the conditions $w'(a_*)=1$ and $w''(a_*)=0$ and the first equation in \eqref{eq:fbpdiv} imply $rw(a_*)=\mu$. Since $w$ is non-decreasing, then
\begin{equation}\label{eq:wnondecr}
rw(x)>\mu\iff x\in(a_*,\infty).
\end{equation}
Moreover, the condition $w'(x)=1$ for $x\ge a_*$ leads to $w(x)=(x-a_*)+w(a_*)$ for $x\ge a_*$.
Then, simple algebra yields, for $x\in[0,\infty)$,
\begin{equation}\label{eq:Lw}
\tfrac{\sigma^2}{2}w''(x)+\mu w'(x)-rw(x)=-r[x-a_*]^+=-[rw(x)-\mu]^+,
\end{equation}
where $[p]^+:=\max\{p,0\}$.

There are two particular values of $\mu$ which will crop up in our analysis below, i.e., $\mu=\mu_0$ and $\mu=\hat \mu$, corresponding to the drift for the duopoly and for the monopoly, respectively. Then we denote
\begin{equation}\label{eq:dnot}
\begin{aligned}
\hat v(x):=w(x;\hat \mu),\quad &v_0(x):=w(x;\mu_0),\quad \hat a:=a_*(\hat \mu),\quad a_0:=a_*(\mu_0)\\
&\hat \xi_t:=\xi^*_t(\hat \mu)\quad\text{and}\quad \xi^0_t:=\xi^*_t(\mu_0).
\end{aligned}
\end{equation}

\section{Strategies and equilibria.}\label{sec:strategies}
We introduce randomised strategies and define the associated non-zero sum game. All the equilibria that we construct in the subsequent sections will be Nash equilibria for such a game. 

For a proper definition of strategy, in addition to the class $C_0([0,\infty))$ we need the class $D_0^+([0,\infty))$ of right-continuous non-decreasing functions $\zeta:[0,\infty)\to[0,\infty)$ with $\zeta(0-)=0$. 
We introduce the canonical space $C_0([0,\infty))\times D_0^+([0,\infty))$ equipped with the Borel $\sigma$-algebra. The coordinate mapping on the canonical space is denoted ${\mathbb W}_t(\varphi,\zeta):=(\varphi(t),\zeta(t))$ for any $(\varphi,\zeta)\in C_0([0,\infty))\times D_0^+([0,\infty))$ and $t\in[0,\infty)$. Its raw filtration is denoted $(\cF^{\mathbb W}_t)_{t\ge 0}$.
 
Notice that the space $(\Omega,\cF)$ introduced in the previous section is a subspace of the canonical space and $\varphi=B$ is the first component of the coordinate mapping. 
An element $(\varphi,\zeta)$ of the canonical space has to be interpreted as the pair formed by a realized trajectory of the process $B$ and a realized trajectory of a control, say $D$ for Player 2 (resp.\ $L$ for Player 1). 
That pair is the information available to Player 1 (resp.\ Player 2) during the game and a strategy will be a map from the canonical space to the set of control trajectories. There is no natural reference probability on $D_0^+([0,\infty))$, because the trajectory of the control of a player can be chosen freely. As a consequence there is no natural reference probability on the canonical space. For that reason, our analysis of strategies will be performed pathwise and the connection with the probabilities will appear only when defining the payoffs of the game. 

We say that a mapping
$\chi:[0,\infty)\times C_0([0,\infty))\times D_0^+([0,\infty))\to \R$
is {\em non-anticipative} if $\chi(t,\varphi,\zeta)$ is $\cF^{\mathbb W}_t$-measurable for all $t\ge 0$. In order to avoid further notation, in what follows we treat mappings $\Phi:[0,\infty)\times C_0([0,\infty))\to \R$ as mappings defined on the canonical space but with no dependence on the coordinate process $\zeta$.

The next definition of randomised strategy follows an idea proposed by Aumann \cite{Aumann}: a randomised strategy is a family of strategies depending on an auxiliary randomisation variable $u$. Since we consider firms with different initial endowments, we formally introduce the class of admissible randomised strategies for the problem starting from an arbitrary initial point $x$.
\begin{definition}[Randomised Strategy] \label{def:random_strategies}
A measurable mapping $(u,t,\varphi,\zeta)\mapsto \Xi(u,t,\varphi,\zeta)$ with
$\Xi: [0,1]\times[0,\infty)\times C_0([0,\infty))\times D_0^+([0,\infty))\to [0,\infty)$
is an admissible {\em randomised strategy} with initial condition $x$ if:
\begin{itemize}
\item[(i)] $\Xi(u,\cdot,\cdot,\cdot)$ is non-anticipative for each $u\in[0,1]$, 
\item[(ii)] $t\mapsto \Xi(u,t,\varphi,\zeta)$ is right-continuous, non-decreasing for $(u,\varphi,\zeta)\in[0,1]\times C_0([0,\infty))\times D_0^+([0,\infty))$, 
\item[(iii)] For all $(u,\varphi,\zeta) \in [0,1]\times C_0([0,\infty))\times D_0^+([0,\infty))$ and $t\geq 0$. 
\begin{equation*}
\Xi(u,t,\varphi,\zeta)- \Xi(u,t-,\varphi,\zeta)  \leq \left( x+\mu_0 t + \sigma \varphi(t)- \Xi(u,t-,\varphi,\zeta) \right)^+,
\end{equation*}
with the convention $\Xi(u,0-,\varphi,\zeta)=0$.
\end{itemize}
The set of admissible {\em randomised strategies} with initial condition of the state dynamics equal to $x$ is denoted $\Sigma_R(x)$. The subset of $\Sigma_R(x)$
of mappings $\Xi(u,t,\varphi,\zeta)=\Psi(t,\varphi,\zeta)$
which {\em do not depend} on the randomisation parameter $u$ is denoted $\Sigma(x)$ and its elements are called {\em pure} admissible strategies.
\end{definition}
\medskip

The two players use mutually independent randomisation devices. To emphasise that, we let the $i$-th player's randomised strategy depend on a variable $u_i \in [0,1]$, $i=1,2$. The next definition will cast rigorously the following heuristics: when a player plays a randomised strategy, for a fixed value of $u$ and a fixed $\zeta$, the realised trajectory $(t,\varphi)\mapsto\Xi(u,t,\varphi,\zeta)$ is an admissible control according to Definition \ref{def:div}; letting the variable $u$ vary on $[0,1]$ randomises such control.
We now introduce a family of randomised controls which depends on {\em two} auxiliary variables $(u_1,u_2)\in [0,1]^2$. Elements from this family represent realised controls when {\em both players} play randomised strategies.
\begin{definition}[Randomised Control] \label{def:random_controls}
A measurable mapping $(u_1,u_2,t,\varphi)\mapsto \Upsilon(u_1,u_2,t,\varphi)$, 
with $\Upsilon: [0,1]^2\times[0,\infty)\times C_0([0,\infty))\to [0,\infty)$
is an admissible {\em randomised control} with initial condition $x$ if:
\begin{itemize}
\item[(i)] $\Upsilon(u_1,u_2,\cdot,\cdot)$ is non-anticipative for each $(u_1,u_2)\in[0,1]^2$, 
\item[(ii)] $t\mapsto \Upsilon(u_1,u_2,t,\varphi)$ is right-continuous and non-decreasing for any $(u_1,u_2,\varphi)\in[0,1]^2\times C_0([0,\infty))$, 
\item[(iii)] For all $(u_1,u_2,\varphi) \in [0,1]^2\times C_0([0,\infty))$ and $t\geq 0$. 
\begin{equation*}
\Upsilon(u_1,u_2,t,\varphi)- \Upsilon(u_1,u_2,t-,\varphi)  \leq \left( x+\mu_0 t + \sigma \varphi(t)- \Upsilon(u_1,u_2,t-,\varphi) \right)^+,
\end{equation*}
with the convention $\Upsilon(u_1,u_2,0-,\varphi)=0$.
\end{itemize}
The set of admissible {\em randomised controls} with initial condition of the state dynamics equal to $x$ is denoted $\cD_R(x)$.
The subset of $\cD_R(x)$ of mappings $\Upsilon(u_1,u_2,t,\varphi)=\Phi(t,\varphi)$ which {\em do not depend} on the randomisation parameters $(u_1,u_2)$ is denoted $\cD(x)$ and its elements are called {\em pure} admissible controls. 
\end{definition}
\medskip

\begin{remark}
Notice that the set $\cD(x)$ is exactly the set of controls introduced in Definition \ref{def:div}. Indeed, in the notation of Definition \ref{def:div} the coordinate mapping $\varphi$ corresponds to $B$ and a $(\cF_t)$-adapted, c\`adl\`ag process $L$ can be expressed as a non-anticipative mapping $(t,B)\mapsto\Phi(t,B)$. 
\end{remark}
\medskip

Every pair $(\Phi,\Psi)\in \cD(x)\times \Sigma(y)$ induces a unique pair of (pure) admissible controls: 
$(t,\varphi)\rightarrow (\Phi(t,\varphi), \Psi(t,\varphi,\Phi(t,\varphi))$.
However, it is well-known  that not every pair of pure strategies, or more generally any pair of randomised strategies, induces a pair of admissible controls\footnote{For example, let us consider the pure strategies $\Psi^\dagger(t,\varphi,\zeta)=1_{\{\zeta(0)>0\}}$ (``I move only if you move") and $\Psi^\sharp(t,\varphi,\zeta)=1_{\{\zeta(0)=0\}}$ (``I move only if you don't move"), which do not depend on $\varphi$. 
It is easy to verify that there exists no pair $(L,D)$ induced by $(\Psi^\dagger,\Psi^\sharp)$. Indeed, both strategies $\Psi^\dagger$ and $\Psi^\sharp$ take their values in $\{0,1\}$ (i.e., constant processes), but none of the pairs $(L,D) \in \{(0,0),(0,1),(1,0),(1,1)\}$ satisfies $L=\Psi^\dagger(\cdot,\cdot,D)$ and $D=\Psi^\sharp(\cdot,\cdot,L)$. 
More general discussions on technical and conceptual issues related to strategies in continuous-time games can be found in \cite{Neyman} and \cite{PTZ}.}. That leads us to consider the subset of $\Sigma_R(x)\times \Sigma_R(y)$ defined below:
\begin{definition}[Control-inducing pairs]\label{def:theta_r}
We let $\Theta_R(x,y)$ be the collection of all pairs $(\Xi_1,\Xi_2)\in \Sigma_R(x)\!\times\! \Sigma_R(y)$ for which there exists $(L,D) \in \cD_R(x)\!\times\! \cD_R(y)$ such that, for all $(u_1,u_2) \in [0,1]^2$, $t\geq 0$ and $\varphi \in C_0([0,\infty))$, the map $[0,t] \ni s \rightarrow (L(u_1,u_2,s,\varphi),D(u_1,u_2,s,\varphi))$ is the unique solution pair of the system:
\begin{equation*}
\left\{ 
\begin{matrix} 
L(u_1,u_2,s,\varphi)=\Xi_1(u_1,s,\varphi,D(u_1,u_2,s,\varphi)), \\ 
D(u_1,u_2,s,\varphi)=\Xi_2(u_2,s,\varphi,L(u_1,u_2,s,\varphi)), 
\end{matrix} 
\right. 
\quad\forall s\in [0,t].
\end{equation*}
\end{definition}
\medskip

Notice that uniqueness of the solution of the fixed point problem appearing in the definition of $\Theta_R(x,y)$ is required {\em for any time interval} $[0,t]$. This is needed because a situation may arise in which there are several fixed points on $[0,t]$ and all but one of them will later lead to non-existence of the fixed point (say at some $t'>t$). In that situation, global uniqueness holds but local uniqueness fails. Problems of existence and uniqueness of such fixed points in continuous-time games have been discussed extensively in the literature. We point the interested reader to \cite{PTZ} and references therein for a survey of different approaches. 
 
Player 1's payoff associated to a pair $(\Xi_1,\Xi_2) \in \Theta_R(x,y)$ is defined as the expected payoff associated to the induced control pair $(L,D)\in\cD_R(x)\times\cD_R(y)$, evaluated along the trajectory of the underlying Brownian motion (and analogously for Player 2). 
Compared to the formulation of the two players' payoffs given in Equation \eqref{eq:J12}, we need to expand the probability space in order to account for the randomisation of the strategies. At the payoffs' level, that corresponds to integrating the variables $(u_1,u_2)$ over the square $[0,1]^2$. 

More precisely, we consider the enlarged probability space 
\begin{equation*}
(\bar \Omega, \bar \cF, \bar \P):=(\Omega\times [0,1]\times [0,1], \cF\otimes \cB([0,1])\otimes \cB([0,1]), \P\otimes \lambda \otimes \lambda),
\end{equation*}
with canonical element $\bar \omega=(\omega,u_1,u_2)$, where $\cB([0,1])$ denotes the Borel $\sigma$-field and $\lambda$ the Lebesgue measure. We denote $\bar \P_{x,y}(\,\cdot\,)=\bar \P(\,\cdot\,|X_{0-}=x,Y_{0-}=y)$ and $\bar \E_{x,y}$ the corresponding expectation operator.
The two random variables $U_i(\bar \omega)=u_i$ for $i=1,2$  defined on $(\bar \Omega,\bar \cF, \bar \P)$, are uniformly distributed on $[0,1]$, mutually independent and independent of the Brownian motion $B=B(\omega)$. 

On this space, a pair of randomised controls $(L,D)\in\cD_R(x)\times\cD_R(y)$ is identified with random processes on $\bar \Omega$ through the relations 
$L_t =L(U_1,U_2,t,B)$ and $D_t=D(U_1,U_2,t,B)$, $\forall t\geq 0$.
Moreover, for every pair $(u_1,u_2)$ the maps $L^{u_1,u_2}=L(u_1,u_2,\cdot,\cdot)$ and $D^{u_1,u_2}=D(u_1,u_2,\cdot,\cdot)$ are identified with admissible controls in the sense of Definition \ref{def:div}.

\begin{definition}[Players' payoffs] \label{def:payoffs}
For $(L,D)\in\cD_R(x)\times\cD_R(y)$, with a slight abuse of notation we set 
\begin{equation}\label{eq:payoff0}
\begin{aligned}
\cJ^1_{x,y}(L,D)&\coloneqq \bar \E_{x,y}\Big[\int_{[0,\gamma_X\wedge\gamma_Y]}\e^{-r t}\ud L_t+1_{\{\gamma_Y<\gamma_X\}}\e^{-r\gamma_Y}\hat v(X^L_{\gamma_Y})\Big]\\
&=\int_0^1 \int_0^1 \cJ^1_{x,y}(L^{u_1,u_2},D^{u_1,u_2}) \ud u_1 \ud u_2,
\end{aligned}
\end{equation}
where $\cJ^1_{x,y}(L^{u_1,u_2},D^{u_1,u_2})$ has been introduced in \eqref{eq:J12} 
and the second equality in \eqref{eq:payoff0} follows from Fubini's theorem.
Player $2$'s payoff associated to a pair $(L,D)$ is defined analogously.

Given a pair $(\Xi_1,\Xi_2) \in \Theta_R(x,y)$ of randomised strategies with associated $(L,D)\in\cD_R(x)\times\cD_R(y)$, we identify (again with a small abuse of notation)
\begin{equation}\label{eq:payoff}
\begin{aligned}
\cJ^1_{x,y}(\Xi_1,\Xi_2)\coloneqq\cJ^1_{x,y}(L,D)\quad\text{and}\quad\cJ^2_{x,y}(\Xi_2,\Xi_1)\coloneqq\cJ^2_{x,y}(D,L).
\end{aligned}
\end{equation}
The payoffs associated to a pair $(\Xi_1,\Xi_2) \notin \Theta_R(x,y)$ are defined as
$\cJ^1_{x,y}(\Xi_1,\Xi_2)=\cJ^2_{x,y}(\Xi_2,\Xi_1)\coloneqq-\infty$ (in the same spirit as in \cite{PTZ}).
\end{definition}
\medskip

Next we introduce the general notion of equilibrium used in this paper.
\begin{definition}[Nash Equilibrium in randomised strategies]\label{def:NE3}
Given $(x,y)\in [0,\infty)^2$, a pair $(\Xi^*_1,\Xi^*_2) \in \Sigma_R(x)\times \Sigma_R(y)$ is a Nash equilibrium if and only if for all $(\Xi_1,\Xi_2) \in \Sigma_R(x)\times \Sigma_R(y)$
\begin{equation*}
\cJ^1_{x,y}(\Xi_1,\Xi^*_2)\le\cJ^1_{x,y}(\Xi^*_1,\Xi^*_2)\quad\text{and}\quad \cJ^2_{x,y}(\Xi_2,\Xi^*_1)\le\cJ^2_{x,y}(\Xi^*_2,\Xi^*_1). 
\end{equation*}
\end{definition}
\medskip

We notice that a different definition of admissible strategy profile and equilibrium can be found in, e.g., \cite{Neyman}. A player's strategy in \cite{Neyman} is admissible if it induces a well-defined dynamics for {\em any} choice of the opponent's strategy. In our context, that definition is too restrictive. Indeed the equilibrium pairs $(\Xi^*_1,\Xi^*_2)$ that we find in this paper are not admissible in the sense of Neyman's but they are control-inducing in the sense of our Definition \ref{def:theta_r}. From now on we often use the notation for the game payoffs introduced in \eqref{eq:payoff}.

Randomised and pure strategies/controls are linked and this link leads to some useful considerations about Nash equilibria. First of all we notice that $\cD(x)\subset \Sigma(x) \subset \Sigma_R(x)$. The first inclusion is obtained by identifying a control with a pure strategy that does not depend on the variable $\zeta$. The second inclusion holds because a pure strategy is a randomised strategy that does not depend on $u$. It follows that $\Theta_R(x,y)\neq\varnothing$, because for all $(x,y)\in [0,\infty)^2$, we have
\begin{equation}\label{eq:inclusion_control_theta}
\cD(x)\times   \Sigma_R(y) \subseteq \Theta_R(x,y) \quad\text{and}\quad \Sigma_R(x)\times \cD(y) \subseteq \Theta_R(x,y).
\end{equation}
Indeed, any pair $(\Xi_1,\Phi_2) \in\Sigma_R(x)\times   \cD(y)$ induces a pair $(L,D)\in\cD_R(x)\times\cD_R(y)$ that does not depend on $u_2$ defined by
\begin{equation}\label{eq:pair}
L(u_1,u_2,t,\varphi)=\Xi_1(u_1,t,\varphi,\Phi_2(\,\cdot\, ,\varphi)), \quad D(u_1,u_2,t,\varphi)=\Phi_2(t,\varphi),
\end{equation}
for all $u_1,u_2\in[0,1]^2$, $t\geq 0$ and $\varphi \in C_0([0,\infty)$. 
The associated payoffs read 
\begin{equation}
\cJ^1_{x,y}(\Xi_1,\Phi_2)=\int_0^1  \cJ^1_{x,y}(L^{u_1},D)  \ud u_1\quad\text{and}\quad \cJ^2_{x,y}(\Phi_2,\Xi_1)=\int_0^1  \cJ^2_{x,y}(D,L^{u_1})  \ud u_1,
\end{equation}
where for every $u_1\in [0,1]$, $L^{u_1}:=L(u_1,\cdot,\cdot,\cdot)$ is an admissible control. 
We use a similar notation for payoffs associated to a pair $(\Phi_1,\Xi_2) \in \cD(x) \times \Sigma_R(y)$.

Thanks to the observations above, we obtain useful results about Nash equilibria. In particular, in Definition \ref{def:NE3} each player can restrict deviations from the equilibrium pair to pure strategies. Moreover, if there is a Nash equilibrium in which both players use a pure strategy, each player can restrict deviations to controls. 
 
\begin{lemma}\label{lem:deviations}
Given $(x,y)\in [0,\infty)^2$, the following properties hold: 
\begin{enumerate}
\item[(i)] If $(\Xi^*_1,\Xi^*_2)\in \Sigma_R(x)\times \Sigma_R(y)$ is a Nash equilibrium, then $(\Xi^*_1,\Xi^*_2)\in \Theta_R(x,y)$. 
\item[(ii)] A pair $(\Xi^*_1,\Xi^*_2) \in \Sigma_R(x)\times \Sigma_R(y)$ is a Nash equilibrium if and only if,\ $\forall(\Psi_1,\Psi_2) \in \Sigma(x)\times \Sigma(y)$
\begin{equation}
\cJ^1_{x,y}(\Psi_1,\Xi^*_2)\le\cJ^1_{x,y}(\Xi^*_1,\Xi^*_2)\quad\text{and}\quad \cJ^2_{x,y}(\Psi_2,\Xi^*_1)\le\cJ^2_{x,y}(\Xi^*_2,\Xi^*_1). 
\end{equation}
\item[(iii)] A pair $(\Psi^*_1,\Psi^*_2) \in \Sigma(x)\times \Sigma(y)$ is a Nash equilibrium if and only if,\ $\forall(\Phi_1,\Phi_2) \in \cD(x)\times \cD(y)$
\begin{equation}
\cJ^1_{x,y}(\Phi_1,\Psi^*_2)\le\cJ^1_{x,y}(\Psi^*_1,\Psi^*_2)\quad\text{and}\quad \cJ^2_{x,y}(\Phi_2,\Psi^*_1)\le\cJ^2_{x,y}(\Psi^*_2,\Psi^*_1). 
\end{equation}
\end{enumerate}
\end{lemma}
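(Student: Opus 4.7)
The plan for part (i) is to proceed by contradiction. If $(\Xi^*_1,\Xi^*_2)\notin\Theta_R(x,y)$, then Definition \ref{def:payoffs} forces $\cJ^1_{x,y}(\Xi^*_1,\Xi^*_2)=-\infty$. I would then exhibit a profitable deviation for Player 1 by taking $\Phi_1\equiv 0\in\cD(x)$: the inclusion $\cD(x)\times\Sigma_R(y)\subseteq\Theta_R(x,y)$ from \eqref{eq:inclusion_control_theta} guarantees that the induced pair $(L,D)$ exists with $L\equiv 0$, so Player 1's payoff reduces to $\bar \E_{x,y}[1_{\{\gamma_Y<\gamma_X\}}e^{-r\gamma_Y}\hat v(X_{\gamma_Y})]\ge 0>-\infty$, contradicting the Nash property. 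The argument for Player 2 is symmetric.

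For part (ii), the forward implication is immediate from $\Sigma(x)\subset\Sigma_R(x)$. The converse is the substantive part: fix $\Xi_1\in\Sigma_R(x)$; if $(\Xi_1,\Xi^*_2)\notin\Theta_R(x,y)$ the inequality is trivial, so I would focus on $(\Xi_1,\Xi^*_2)\in\Theta_R(x,y)$. To each $u_1\in[0,1]$ I would associate the pure strategy $\Psi_1^{u_1}(\cdot,\cdot,\cdot)\coloneqq\Xi_1(u_1,\cdot,\cdot,\cdot)\in\Sigma(x)$ and observe that the fixed-point system defining $(L(u_1,u_2,\cdot,\cdot),D(u_1,u_2,\cdot,\cdot))_{u_2\in[0,1]}$ for $(\Xi_1,\Xi^*_2)$, once we freeze the first coordinate at $u_1$, is identical to the fixed-point system for $(\Psi_1^{u_1},\Xi^*_2)$. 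The global uniqueness clause in Definition \ref{def:theta_r} then forces $(\Psi_1^{u_1},\Xi^*_2)\in\Theta_R(x,y)$ with induced control pair equal to the $u_1$-slice of $(L,D)$. Fubini's theorem yields $\cJ^1_{x,y}(\Xi_1,\Xi^*_2)=\int_0^1 \cJ^1_{x,y}(\Psi_1^{u_1},\Xi^*_2)\,\ud u_1$, and the pure-strategy hypothesis applied slicewise concludes; Player 2's deviations are handled analogously.

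For part (iii), the forward direction follows from $\cD(x)\subset\Sigma(x)$. Conversely, given $\Psi_1\in\Sigma(x)$, the case $(\Psi_1,\Psi^*_2)\notin\Theta_R(x,y)$ is immediate; otherwise the induced pair $(L,D)$ actually lies in $\cD(x)\times\cD(y)$ since neither strategy depends on the randomisation variables. Setting $\Phi_1\coloneqq L\in\cD(x)$, the pair $(\Phi_1,\Psi^*_2)$ belongs to $\cD(x)\times\Sigma_R(y)\subseteq\Theta_R(x,y)$ by \eqref{eq:inclusion_control_theta}, and its induced controls (obtained via \eqref{eq:pair}) are again $(L,D)$; hence $\cJ^1_{x,y}(\Psi_1,\Psi^*_2)=\cJ^1_{x,y}(\Phi_1,\Psi^*_2)$, and the assumed pure-control inequality delivers the desired bound.

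The hard part will be the slicing step in part (ii). Carefully invoking the global uniqueness clause in Definition \ref{def:theta_r} is crucial because one needs the $u_1$-fiber of the global fixed point to coincide with the unique fixed point of the sliced system on \emph{every} interval $[0,t]$; the joint measurability in $(u_1,u_2,t,\varphi)$ of the induced pair, built into the definition of $\cD_R$, must also be invoked to legitimise Fubini. Once this bookkeeping is in place, the remaining steps in (i) and (iii) amount to direct identifications and sign arguments.
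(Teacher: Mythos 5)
Your proof is essentially correct and follows the paper's proof closely for parts (i) and (ii): the contradiction via the zero control in (i), and the $u_1$-slicing plus global-uniqueness argument combined with Fubini in (ii), are the paper's own arguments (the paper is terser in (i) but the content is identical). In (iii) you take a slightly different, more modular route than the paper: you only show that a profitable deviation by a pure strategy $\Psi_1\in\Sigma(x)$ can be replaced by a profitable deviation by the induced control $\Phi_1=L\in\cD(x)$, whereas the paper directly takes a randomised deviation $\Xi_1\in\Sigma_R(x)$, slices it in $u_1$, and concludes via the pure-control inequalities without invoking (ii). Your argument in (iii) therefore establishes only the pure-strategy-to-control reduction and \emph{needs to explicitly cite part (ii)} to pass from ``no profitable pure-strategy deviation'' to ``Nash equilibrium'' (robustness against all of $\Sigma_R$). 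This is a one-line step and the substance of your argument is sound — indeed delegating the $u_1$-slicing to (ii) and keeping (iii) a pure identification argument is arguably cleaner than the paper's proof, which repeats the slicing — but as written, (iii) is incomplete without that final invocation. Everything else (the observation that $(\Psi_1,\Psi^*_2)$ induces a pure control pair, the identification of the induced pair for $(L,\Psi^*_2)$ with $(L,D)$ via \eqref{eq:pair}, the use of \eqref{eq:inclusion_control_theta}) checks out.
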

\begin{proof}
The first point follows directly from the fact that the payoff is non-negative on $\Theta_R(x,y)$, that both players can deviate by choosing an arbitrary control and using \eqref{eq:inclusion_control_theta}.

Let us prove (ii). For the {\em only if} part, it is clear that a Nash equilibrium satisfies the inequalities stated in (ii), because $\Sigma(x)\subset \Sigma_R(x)$ and $\Sigma(y)\subset \Sigma_R(y)$. For the {\em  if} part, let us assume that those inequalities hold. Then, as in the proof of (i) it must be $(\Xi^*_1,\Xi^*_2)\in\Theta_R(x,y)$. Take $\Xi_1 \in \Sigma_R(x)$. Clearly, if $(\Xi_1,\Xi^*_2)\notin \Theta_R(x,y)$ then $\cJ^1_{x,y}(\Xi^*_1,\Xi^*_2)\ge \cJ^1_{x,y}(\Xi_1,\Xi^*_2)=-\infty$. So, with no loss of generality we assume $(\Xi_1,\Xi^*_2)\in \Theta_R(x,y)$. 
Let  $(L,D) \in \cD_R(x)\times \cD_R(y)$ denote the controls induced  by the pair  $(\Xi_1,\Xi^*_2)$ as per Definition \ref{def:theta_r}. From Definition \ref{def:random_strategies}, for every $u_1$, we have $\Xi_1^{u_1}:=\Xi_1(u_1,\cdot,\cdot,\cdot)\in\Sigma(x)$. Uniqueness of the solution of the fixed point problem appearing in the definition of $\Theta_R(x,y)$ implies that $(\Xi_1^{u_1},\Xi^*_2) \in \Theta_R(x,y)$. More precisely, for fixed $u_1$, the unique control pair induced  by the pair  $(\Xi_1^{u_1},\Xi^*_2)$ is the pair of maps depending only on $(u_2,t,\varphi)$  given by $(L^{u_1},D^{u_1})$ where $L^{u_1}:=L(u_1,\cdot,\cdot,\cdot)$ and $D^{u_1}:=D(u_1,\cdot,\cdot,\cdot)$.
Recalling the notation $L^{u_1,u_2}, D^{u_1,u_2}$ for $i=1,2$ and Player 1's payoff in Definition \ref{def:payoffs}, we have 
$\cJ^1_{x,y}(\Xi_1^{u_1},\Xi^*_2)=\cJ^1_{x,y}(L^{u_1},D^{u_1})=\int_0^1 \cJ^1_{x,y}(L^{u_1,u_2},D^{u_1,u_2}) \ud u_2$.
With this in mind, Player 1's payoff reads 
\begin{equation*}
\begin{aligned}
\cJ^1_{x,y}(\Xi_1,\Xi^*_2)&=\int_0^1 \int_0^1 \cJ^1_{x,y}(L^{u_1,u_2},D^{u_1,u_2}) \ud u_2 \ud u_1 \\
&= \int_0^1 \cJ^1_{x,y}(\Xi_1^{u_1},\Xi^*_2) \ud u_1 \leq \int_0^1 \cJ^1_{x,y}(\Xi^*_1,\Xi^*_2) \ud u_1 =\cJ^1_{x,y}(\Xi^*_1,\Xi^*_2),
\end{aligned}
\end{equation*} 
where the inequality holds by the equations in (ii). A similar inequality holds for Player $2$, thus concluding the proof. 

Let us prove point (iii). For the {\em only if} part, it is clear that a Nash equilibrium satisfies the inequalities stated in (iii) because $\cD(x)\subset \Sigma_R(x)$ and $\cD(y)\subset \Sigma_R(y)$. 
For the {\em if} part, let us assume that those inequalities hold for all $(\Phi_1,\Phi_2) \in \cD(x)\times \cD(y)$. Then, as in the proof of (i) it must be $(\Psi^*_1,\Psi^*_2)\in\Theta_R(x,y)$. As in the proof of (ii), we may assume that $\Xi_1 \in \Sigma_R(x)$ is such that $(\Xi_1,\Psi_2^*)\in \Theta_R(x,y)$.
Let  $(L,D) \in \cD_R(x)\times \cD_R(y)$ denote the randomised controls induced  by the pair  $(\Xi_1,\Psi^*_2)$ as per Definition \ref{def:theta_r}, and note that these controls depend only on the variables $(u_1,t,\varphi)$. 
Recalling the definition of Player 1's payoff, we have 
$\cJ^1_{x,y}(\Xi_1,\Psi^*_2)=\int_0^1 \cJ^1_{x,y}(L^{u_1},D^{u_1}) \ud u_1$,
where $L^{u_1}:=L(u_1,\cdot,\cdot,\cdot)$ and $D^{u_1}:=D(u_1,\cdot,\cdot,\cdot)$.
The fact that $(L,D)$ is solution of the fixed point problem appearing in the definition of $\Theta_R(x,y)$ associated to $(\Xi_1,\Psi^*_2)$ implies that for each $u_1$ the pair $(L^{u_1},\Psi^*_2)$ induces the unique control pair $(L^{u_1},D^{u_1})$. It follows that
$\cJ^1_{x,y}(L^{u_1},\Psi^*_2)=\cJ^1_{x,y}(L^{u_1},D^{u_1})$.
We conclude that
\begin{equation*}
\begin{aligned}
\cJ^1_{x,y}(\Xi_1,\Psi^*_2)&=\int_0^1 \cJ^1_{x,y}(L^{u_1},D^{u_1}) \ud u_1 = \int_0^1 \cJ^1_{x,y}(L^{u_1},\Psi^*_2) \ud u_1 \\
&\leq \int_0^1 \cJ^1_{x,y}(\Psi^*_1,\Psi^*_2) \ud u_1 =\cJ^1_{x,y}(\Psi^*_1,\Psi^*_2),
\end{aligned}
\end{equation*} 
where the inequality holds by the {\em if} assumption in (iii). A similar inequality holds for player $2$, thus concluding the proof. 
\end{proof}

This section provided a rigorous formulation of the game faced by the two firms. The challenge is now to characterise equilibrium strategies. A general characterization of {\em all} possible Nash equilibria (according to Definition \ref{def:NE3}) seems unfeasible. In the next two sections we find one equilibrium for the case of firms with different initial endowment and one {\em symmetric} equilibrium for firms with the same initial endowment. However, we do not claim uniqueness of those equilibria and indeed, in the symmetric case, we show that there are at least three different equilibria, two of which are not symmetric across players (cf.\ Remark \ref{MultipleNash}). We also leave for future study the question of subgame perfection of our equilibria, because of subtle technical difficulties that arise in our continuous-time setup. Nevertheless, we notice that the construction of our equilibria is based on dynamic programming ideas and it holds for any initial point of the underlying dynamics. That seems a natural starting point for a suitable notion of subgame perfection.

\section{Nash equilibrium with asymmetric initial endowment.}\label{sec:NEasym}

In this section we consider firms with different initial endowments. Specifically, we study the case $y>x$ and construct a Nash equilibrium. It turns out that, in our setting, it is enough for both players to use pure strategies to obtain a Nash equilibrium. Even more, the poorer firm's strategy is independent of the $\zeta$-variable and it is therefore a pure control. Next we state the theorem. Its proof will be distilled in a series of intermediate results which will be illustrated in the rest of the section.

\begin{theorem}[NE with asymmetric endowment]\label{thm:NE1}
Let $y>x$ and recall $\hat a$ and $a_0$ as in \eqref{eq:dnot}. There exists a function $b:[0,\infty)\to [0,\infty)$ and a constant $\alpha>0$ with the following properties:
\begin{itemize}
\item[(i)] $b(0)=\hat a$ (we extend $b$ to $(-\infty,0)$ as $b(z)=+\infty$ for $z<0$),  
\item[(ii)] $b\in C([0,\infty))$ and $b\in C^1([0,a_0])$, 
\item[(iii)] $b$ strictly decreasing on $[0,a_0]$, 
\item[(iv)] $b(x)=\alpha>0$ for $x\ge a_0$, 
\end{itemize}
such that the pair $(\Phi^*(t,\varphi),\Psi^*(t,\varphi,\Phi^*(t,\varphi)))\in \cD(x)\times\Sigma(y)$ is a Nash equilibrium for the game starting at $(x,y)$, with
\begin{equation}\label{eq:maps}
\begin{aligned}
\Phi^*(t,\varphi)&:=\sup_{0\le s\le t} \Big(x-a_0+\mu_0 s+\sigma \varphi(s)\Big)^+,\\
\Psi^*(t,\varphi,\zeta)&:=\sup_{0\le s\le t}\Big(y-x+\zeta(s)-b\big(x+\mu_0 s +\sigma \varphi(s)-\zeta(s)\big)\Big)^+,
\end{aligned}
\end{equation} 
for $t\geq 0$ and all $(\varphi,\zeta)\in C_0([0,\infty))\times D_0^+([0,\infty))$.
\end{theorem}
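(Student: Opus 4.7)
The approach is to verify that $(\Phi^*,\Psi^*)$ is a Nash equilibrium by constructing Player 2's best-response value function $u(x,y)$ under $L=\Phi^*$ via a free boundary problem, and then applying Lemma \ref{lem:deviations}-(iii) to restrict deviations to pure controls. Since $\Phi^*$ reflects $X^L$ downward at $a_0$, the relevant state space is $\{0\le x\le a_0,\,y\ge 0\}$ and $u$ should solve: on the continuation region $\cC=\{0<x<a_0,\,0<y-x<b(x)\}$, the HJB equation
\begin{equation*}
\tfrac{\sigma^2}{2}(u_{xx}+2u_{xy}+u_{yy})+\mu_0(u_x+u_y)-r u=0,
\end{equation*}
together with Dirichlet data $u(0,y)=\hat v(y)$ and $u(x,0)=0$, the Neumann condition $u_x(a_0,y)=0$ arising from the local-time reflection of $X$ at $a_0$, and smooth-pasting $u_y=1,\,u_{yy}=0$ across the free boundary $\{y=x+b(x)\}$. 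The crucial simplification is that $X$ and $Y$ share the same Brownian driver: the change of variables $z=y-x$, $\tilde u(x,z):=u(x,x+z)$, makes the PDE one-dimensional in $x$ and parametrised by $z$,
\begin{equation*}
\tfrac{\sigma^2}{2}\tilde u_{xx}+\mu_0\tilde u_x-r\tilde u=0,
\end{equation*}
with general solution $\tilde u(x,z)=A(z)\e^{\beta_1 x}+B(z)\e^{\beta_2 x}$ for $\beta_i=\beta_i(\mu_0)$ as in \eqref{eq:w}.

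The function $b$ and the coefficients $A,B$ are then pinned down by the matching conditions. The Dirichlet condition at $x=0$ yields $A(z)+B(z)=\hat v(z)$ on $z\in[0,\hat a]$; comparing with the De Finetti monopolist problem at $x=0$ forces $b(0)=\hat a$, i.e.\ property (i). For $z\in[\alpha,\hat a]$ the continuation region in $x$ is $[0,b^{-1}(z)]$, and combining the first-order smooth pasting $\tilde u_z(b^{-1}(z),z)=1$ with the differentiated Dirichlet identity $A'(z)+B'(z)=\hat v'(z)$ gives $A'(z),B'(z)$ in closed form; substituting into the second-order smooth pasting $\tilde u_{zz}(b^{-1}(z),z)=0$ produces a first-order nonlinear ODE for $b^{-1}(z)$, to be integrated backwards from $z=\hat a$ with initial condition $b^{-1}(\hat a)=0$. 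The constant $\alpha$ is then characterised by $b^{-1}(\alpha)=a_0$. For $z\in[0,\alpha]$ the continuation region in $x$ is $[0,a_0]$ and $\tilde u(\cdot,z)$ is determined by Dirichlet at $x=0$ together with the Neumann-type relation $\tilde u_x(a_0,z)=\tilde u_z(a_0,z)$ (equivalent to $u_x(a_0,y)=0$), which is a well-posed linear problem. Properties (ii)--(iv) of $b$ follow from direct inspection of the ODE, and $b$ is extended as the constant $\alpha$ on $[a_0,\infty)$ to cover states visited by deviating controls of Player 1.

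For the verification, first $(\Phi^*,\Psi^*)\in\Theta_R(x,y)$ because $\Phi^*$ does not depend on $\zeta$, so the fixed-point system in Definition \ref{def:theta_r} has the unique solution $L=\Phi^*$ and $D=\Psi^*(\cdot,\cdot,\Phi^*(\cdot,\cdot))$. For Player 1's optimality I observe that, under $\Psi^*$, the supremum formula together with $b\ge 0$ gives $D_t\le(y-x)+L_t$, hence $Y^D_t-X^L_t\ge 0$ for every admissible deviation $L$; in particular $\gamma_X\le\gamma_Y$ a.s., and Player 1's payoff reduces to the classical De Finetti functional $\bar\E_{x,y}\!\left[\int_{[0,\gamma_X]}\e^{-rt}\,\ud L_t\right]$, optimised by $\Phi^*$ (cf.\ Section \ref{sec:divid}). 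For Player 2's optimality, by Lemma \ref{lem:deviations}-(iii) I may restrict deviations to pure controls $D\in\cD(y)$; applying It\^o's formula to $t\mapsto\e^{-rt}u(X^{\Phi^*}_t,Y^D_t)$ and invoking the PDE, the inequality $u_y\ge 1$ outside $\cC$, the Neumann condition at $x=a_0$ (which kills the local-time term $\int\e^{-rt}u_x\,\ud L^*_t$ since $L^*$ is supported on $\{X^{\Phi^*}=a_0\}$), and the boundary values $u(0,\cdot)=\hat v(\cdot)$, $u(\cdot,0)=0$ to match the terminal payoff, yields $\cJ^2_{x,y}(D,\Phi^*)\le u(x,y)$, with equality for $D=\Psi^*$.

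The principal obstacle is the rigorous construction of $b$ and the proof of its qualitative properties. The nonlinear ODE for $b^{-1}(z)$ obtained from the higher-order smooth pasting must be shown to admit a $C^1$ solution, strictly decreasing on $[\alpha,\hat a]$, that reaches $a_0$ at a positive value $\alpha$. In addition one has to establish the compatibility $b'(x)>-1$ on $[0,a_0]$: geometrically this keeps $\{z=b(x)\}$ strictly steeper than the diagonal of the $(x,z)$-plane, and it is what makes the reflection defining $\Psi^*$ well-posed against any admissible control of Player 1, so that the fixed-point system in Definition \ref{def:theta_r} is actually uniquely solvable when Player 2 uses $\Psi^*$. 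Establishing these properties will likely require a careful shooting argument or the reduction of the smooth-pasting equations to an implicit relation for $b$, together with an asymptotic analysis of $b^{-1}$ near the degenerate endpoint $z=\hat a$.
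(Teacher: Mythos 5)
Your proposal follows essentially the same route as the paper: change variables to $z=y-x$ to collapse the PDE into a family of ODEs in $x$ parametrised by $z$, write the general solution as $A(z)\e^{\beta_1 x}+B(z)\e^{\beta_2 x}$, pin down $A$, $B$ and $b$ from the Dirichlet datum at $x=0$, the reflection/Neumann relation at $x=a_0$, and smooth pasting at the free boundary, then verify optimality for Player~2 by It\^o and reduce Player~1's problem to De~Finetti's, concluding by Lemma~\ref{lem:deviations}-(iii). Your smooth-pasting condition $\tilde u_{zz}(b^{-1}(z),z)=0$ is equivalent to the paper's $\partial_{zx}u(x,b(x))=0$: differentiating $\partial_z u(c(z),z)=1$ along the boundary gives $\partial_{zz}u + c'(z)\,\partial_{zx}u=0$, and since $c'\neq 0$ on the relevant range the two vanish simultaneously.

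Two points deserve correction. First, the construction of $b$ that you flag as the \emph{principal obstacle} (``shooting argument'', ``asymptotic analysis near $z=\hat a$'') is resolved in closed form in the paper. The smooth-pasting system reduces to $\hat v'(z)=\phi(c(z))$ with $\phi(\ell)=\frac{\beta_1 \e^{\beta_1 \ell}-\beta_2 \e^{\beta_2 \ell}}{(\beta_1-\beta_2)\e^{(\beta_1+\beta_2)\ell}}$, and the crucial algebraic observation is that $\phi(\ell)=v_0'(a_0-\ell)$ for $\ell\in[0,a_0]$. This yields the explicit formula $b(x)=[(\hat v')^{-1}\circ v_0'](a_0-x)$ in \eqref{eq:defb}, from which $b(0)=\hat a$, strict monotonicity, continuity, and $b\in C^1([0,a_0])$ (the endpoint $x=0$ handled by L'H\^opital) all follow directly, and $\alpha=b(a_0)>0$ is the unique root of $\hat v'(\alpha)=v_0'(0)$, positivity following from $\hat v\ge v_0$ and a comparison of $\hat v'',v_0''$ at $0$. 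No shooting argument is needed.

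Second, the side condition $b'(x)>-1$ that you introduce to ensure ``the reflection defining $\Psi^*$ is well-posed'' and that ``the fixed-point system in Definition~\ref{def:theta_r} is uniquely solvable'' is not required and is not used in the paper. Because $\Psi^*$ takes Player~1's control $\zeta$ as an \emph{exogenous input} and the resulting $D$ does not feed back into $X^L$, the system in Definition~\ref{def:theta_r} is trivially and uniquely solvable whenever one of the two players uses a pure control (this is \eqref{eq:inclusion_control_theta}); there is no simultaneity to resolve, hence no geometric condition on the slope of $b$ is needed. Admissibility of $\Psi^*$ in the sense of Definition~\ref{def:random_strategies}(iii) also holds without it: a jump of $\Psi^*$ at time $t$ is bounded by $(Y^D_{t-})^+$ precisely because $X^L_t+b(X^L_t)\ge 0$, which is immediate from $b\ge 0$ on $[0,\infty)$ and the convention $b=+\infty$ on $(-\infty,0)$. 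Finally, in the reduction of Player~1's payoff you derive $Y^D_t\ge X^L_t$ from $b\ge 0$; the paper obtains the strict inequality $Y^D_t>X^L_t$ from $b\ge\alpha>0$, but your weaker bound already gives $\gamma_X\le\gamma_Y$ and hence $\{\gamma_Y<\gamma_X\}$ null, so the De~Finetti reduction goes through either way.
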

In keeping with the notation from \eqref{eq:XY}, the controls induced by the pair of strategies $(\Phi^*,\Psi^*)$ (cf.\ Definition \ref{def:theta_r}) will be denoted
\begin{equation}\label{eq:phipsi*}
L^*_t=\Phi^*(t,B)\quad\text{ and }\quad D^*_t=\Psi^*(t,B,L^*),
\end{equation}
upon recalling that $B$ is the canonical process on $\Omega$ under the Wiener measure $\P$ (recall also that $(\cF_t)_{t\ge 0}$ is the canonical {\em raw} filtration generated by $B$). 
The associated controlled processes are denoted $X^*\!=\!X^{L^*}$ and $Y^*\!=\!Y^{D^*}$. It follows that $L^*\!=\!\xi^0$ as in \eqref{eq:dnot} and 
\begin{equation}\label{eq:D*}
D^*_t=\sup_{0\le s\le t}\Big(y-x+L^*_s-b(X^*_s)\Big)^+,\quad D^*_{0-}=0.
\end{equation}
Then, Player 1's equilibrium payoff reads $v_1(x,y)\coloneqq\cJ^1_{x,y}(L^*,D^*)=v_0(x)$ with $v_0$ as in \eqref{eq:dnot}. The function $x\mapsto b(x)$ is actually constructed explicitly thanks to \eqref{eq:defb}. Moreover, it will later be clear that Player 2's equilibrium payoff is a $C^1$ function $v_2(x,y)\coloneqq\cJ^2_{x,y}(D^*,L^*)$ solving a suitable free boundary problem with $b(x)$ the free boundary (cf.\ Proposition \ref{prop:SC}).
\begin{remark}[An intuitive interpretation]
An intuitive interpretation of the equilibrium obtained in the theorem is as follows: first of all, given the initial cash advantage, Player 2 can afford to act so as to make sure that the rival cannot become monopolist; this effectively ``forces'' Player 1 to solve the same optimization problem as in the single-agent dividend problem (with drift $\mu_0$); hence, Player 1 adopts the control $L^*_t=\Phi^*(t,B)$ distributing dividends when the cash-reserve exceeds/equals the threshold $a_0$ (optimal in the classical dividend problem); now, Player 2 knows what Player 1 is going to do, and she needs to select a best response; it turns that the best response consists of paying dividends when the cash reserve $Y^*_t$ is at the moving threshold $X^*_t+b(X^*_t)$ where $X^*=X^{L^*}$ (compare to \eqref{eq:X*Y*} later on); in other words Player 2 distributes dividends 
only when the difference of capital between the two firms is sufficiently large (i.e., $Y^*_t-X^*_t\ge b(X^*_t)\ge \alpha>0$); this happens because, due to discounting, Player 2 does not want to be idle while waiting for Player 1 to default; at the same time the condition $Y^*_t-X^*_t\ge b(X^*_t)\ge \alpha>0$ creates a safety buffer that guarantees Player 2 can never be driven out of the market before Player 1. The level $\alpha$ is determined endogenously in Player 2's optimization, balancing the interplay between the effect of discounting and the value of the continuation payoff as a monopolist. The value of $\alpha$ corresponds to the cash holding that makes Player 2 indifferent between paying dividends or not when $X^*$ is equal to $a_0$. 
Finally, we observe that when Player 2 becomes monopolist (i.e., $X^*_t=0$) she pays dividends at the threshold $b(0)=\hat a$, which is optimal for the monopolist's problem. 
\end{remark}
\medskip

\begin{remark}[Measurable dependence on initial endowment]\label{rem:phipsi*}
Notice for later use that the strategies $\Phi^*$ and $\Psi^*$  depend in a measurable way on the initial positions $x$ and $(x,y)$, respectively. When necessary, in Section \ref{sec:sym} we will denote them $\Phi^*(x,t,\varphi)$ and $\Psi^*(x,y,t,\varphi,\zeta)$ to emphasise that dependence. Throughout the current section $x$ and $y$ are fixed, with $y>x$, and we use notations as in \eqref{eq:maps}.
\end{remark}
\medskip

\begin{figure}
\includegraphics[scale=0.25]{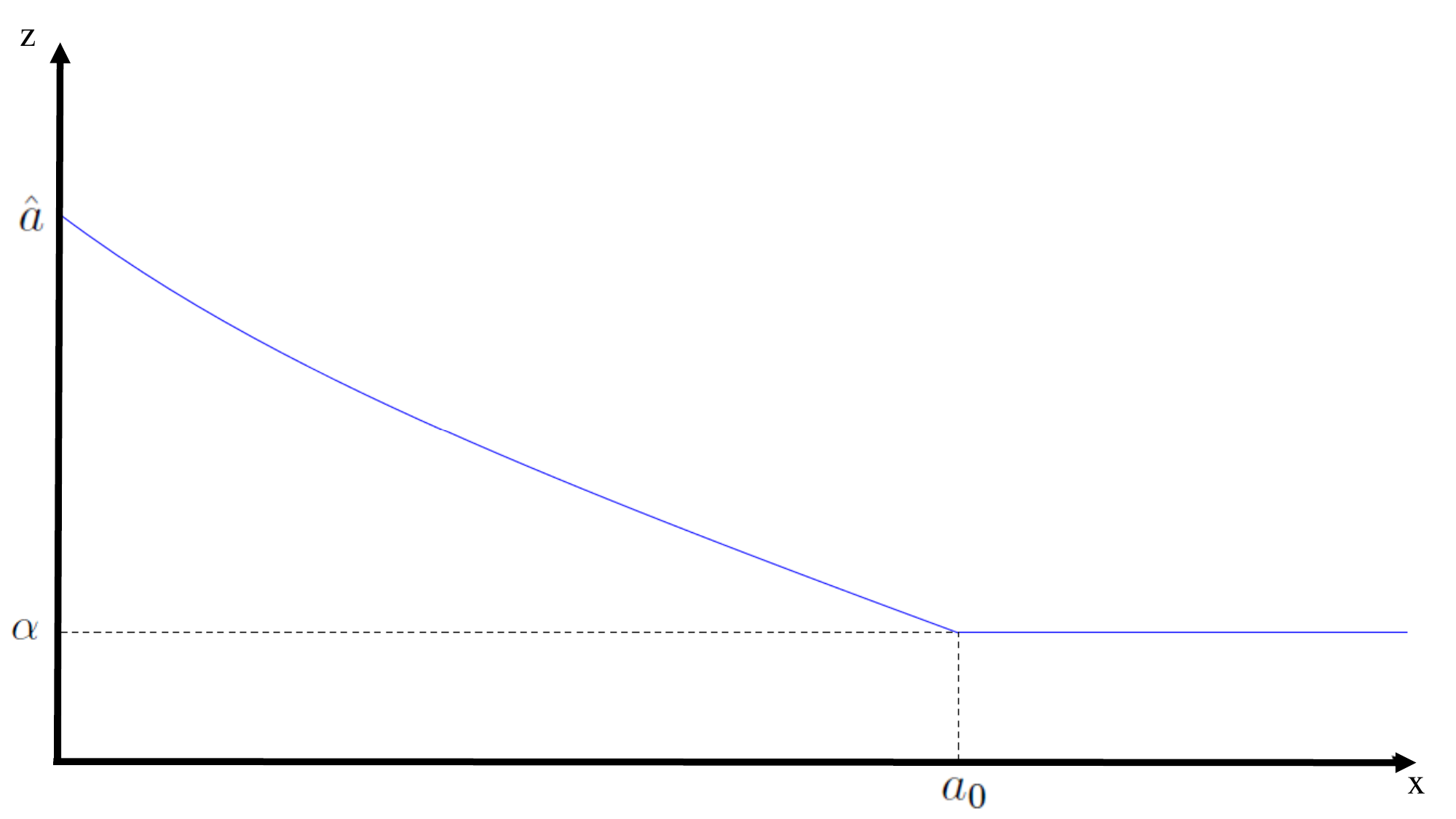}
\caption{An illustration of the boundary $x\mapsto b(x)$ (in blue) in the positive quadrant, obtained by solving \eqref{eq:defb} with parameter values:  $\hat \mu = 1.8$, $\mu_0 = 0.8$, $r = 0.8$, $\sigma = 0.4$, so that  $a_0 \simeq 0.419$, $\alpha \simeq 0.079$ and $\hat a \simeq 0.339$. In the plot we see that the boundary is decreasing in $[0,a_0]$ and it is extended to a constant $b(x)=\alpha$ for $x\ge a_0$.} 
\end{figure}

Our analysis in the rest of the section is organised as follows: first, we construct the solution of a suitable free boundary problem (Proposition \ref{prop:SC}); second, we illustrate properties of the control $D^*$ (Lemma \ref{lem:D*}) and show that it is a best response against the control $L^*$ (Proposition \ref{prop:D*}); third, we show that $L^*$ is best response against the strategy $\Psi^*$ (Lemma \ref{lem:Phi}); finally, combining these results we obtain the proof of Theorem \ref{thm:NE1} thanks to Lemma \ref{lem:deviations}.

It is convenient to change our reference system and consider the state variables $(x,z)$ with $z=y-x$. Letting $Z^{L,D}_t:=Y^D_t-X^L_t$ it is immediate to check that 
$Z^{L,D}_t=y-x+L_t-D_t$, for $t\ge 0$. Moreover, for the default times we have 
$\gamma_Y=\inf\{t\ge 0\,:\,Z^{L,D}_t\le -X^L_t\}=:\gamma_Z$ and $Y_{\gamma_X}=Z_{\gamma_X}$, $\P$-a.s. It should be clear that $\gamma_X=\gamma_X(L)$ and $\gamma_Z=\gamma_Z(D)$, therefore we omit the dependence on the controls, for ease of notation.

In the first part of our analysis we focus on constructing Player 2's best response to Player 1 using the control $L^*=\xi^0$.
It is well-known from the classical dividend problem (and it can be easily verified from \eqref{eq:xi*}) that $L^*_{0-}=0$ and $L^*$ is continuous except for a possible jump at time zero of size $L^*_0=(x-a_0)^+$. Moreover, the Skorokhod condition $\ud L^*_t=1_{\{X^*_t=a_0\}}\ud L^*_t$ holds for all $t>0$, $\P$-a.s. Finally $X^*_t\le a_0$ for all $t\ge 0$, $\P$-a.s. In summary, $L^*$ reflects the dynamics of $X^*$ at the boundary $a_0$ and it is well-known to be optimal for the dividend problem with drift $\mu_0$. 

Since $\cJ^2_{x,y}(\Psi^*,\Phi^*)=\cJ^2_{x,y}(D^*,L^*)$, it follows from  Lemma \ref{lem:deviations}-(iii) that $\Psi^*$ is a best reply to $\Phi^*$ if the control $D^*$ is optimal for Player 2 in the singular control problem with value 
\begin{equation}\label{eq:v2}
v_2(x,y;L^*)=\sup_{D \in \cD(y)}\cJ^2_{x,y}(D,L^*).
\end{equation}

Changing variables we define $u_2(x,z):=v_2(x,z+x;L^*)$ and we have
\begin{equation}\label{eq:u2}
u_2(x,z)=\sup_{D\in \cD(x+z)}\E_{x,z}\Big[\int_{[0,\gamma_{X^*}\wedge\gamma_Z]}\e^{-r t}\ud D_t+1_{\{\gamma_{X^*}<\gamma_Z\}}\e^{-r\gamma_{X^*}}\hat v(Z^{L^*,D}_{\gamma_{X^*}})\Big].
\end{equation}
It suffices to define the function $u_2$ on the set
$H=\{ (x,z) \in [0,a_0]\times \R  \,|\, z \geq -x\}$.
Indeed, if $x>a_0$, the initial jump of the control process $L^*$ shifts the $x$ coordinate to the value $X^*_0=a_0$. Then, we can simply extend the definition of $u_2$ as 
\begin{equation}\label{eq:extu2}
u_2(x,z):=u_2(a_0,z),\quad\text{for $x>a_0$.}
\end{equation} 
We will characterise properties of the function $u_2$, along with an optimal control, using a verification approach. 
In the next proposition we use three closed sets defined as follows: given a decreasing function $b \in C([0,a_0])$ with $\alpha:=b(a_0)>0$,
\begin{equation}
\begin{aligned}
H_\le := \{(x,z)&\in H \,|\, z\le 0\},\quad H_{[0,\alpha]} :=\{(x,z)\in H\,|\,z\in[0,\alpha]\},\\ 
&H_{[\alpha,b]} :=\{ (x,z) \in H \,|\, \alpha \leq z \leq b(x) \}. 
\end{aligned}
\end{equation}

\begin{proposition}\label{prop:SC}
Recall $\hat v$, $v_0$, $\hat a$ and $a_0$ from \eqref{eq:dnot}. There is a unique pair $(u, b)$ such that:
\begin{itemize}
\item[(i)] The function $b$ is continuous on $[0,\infty)$, strictly decreasing on $[0,a_0]$ with $b\in C^1([0,a_0])$, $b(0)=\hat a$ and $b(a_0)=\alpha>0$. Here $\alpha$ is the unique solution of $\hat v'(\alpha)=v_0'(0)$. Moreover, $b(\cdot)$ satisfies
\begin{equation}\label{eq:defb} 
b(x)=[(\hat v')^{-1}\circ v_0'](a_0-x),\quad \forall x \in [0,a_0].
\end{equation}
\item[(ii)] For $\cC:=\{(x,z)\in H : z< b(x)\}$ and $\cS=H\setminus \cC$,
it holds 
$u\in C^1(H)\cap C^2(H_\le\cup H_{[0,\alpha]}\cup H_{[\alpha, b]}\cup\cS)$,
with $u_{xz}$ continuous across the boundary $x\mapsto b(x)$.
\item[(iii)] The function $u$ solves the variational system
\begin{equation}\label{eq:fbp}
\left\{
\begin{array}{ll}
\big(\tfrac{\sigma^2}{2}\partial_{xx} u+\mu_0\partial_x u-r u\big)(x,z)=0,& \text{for $(x,z)\in \overline \cC$},\\
\big(\tfrac{\sigma^2}{2}\partial_{xx} u+\mu_0\partial_x u-r  u\big)(x,z)\le0,& \text{for $(x,z)\in \cS$},\\
\partial_z  u(x,z)> 1, &\text{for $(x,z)\in \cC $},\\
\partial_z  u(x,z)=1, &\text{for $(x,z)\in \cS$},\\
\partial_{zx}  u(x,b(x))=0, &\text{for $x\in [0,a_0]$},\\
u(0,z)=\hat v(z), &\text{for $z\in[0,\infty)$},\\
u(x,-x)=0, & \text{for $x\in (0,a_0]$}, \\ 
(\partial_z u-\partial_x u)(a_0,z)=0, &\text{for $z\in [-a_0,+\infty)$}.
\end{array}
\right.
\end{equation}
\end{itemize}
\end{proposition}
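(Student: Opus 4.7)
The plan is to construct the pair $(u,b)$ explicitly and verify the variational system \eqref{eq:fbp}; uniqueness then follows from the rigid structure of the construction. First I define $b$ on $[0,a_0]$ by formula \eqref{eq:defb} and extend by $b(x)=\alpha$ for $x\ge a_0$. Using Section \ref{sec:divid}, $\hat v,v_0\in C^2$ are strictly concave with $\hat v'(\hat a)=v_0'(a_0)=1$, $\hat v''<0$ on $(0,\hat a)$ and $v_0''<0$ on $(0,a_0)$. Standard drift-monotonicity of $w'(0;\mu)$ (visible from the explicit formula \eqref{eq:w}) gives $\hat v'(0)>v_0'(0)>1$, so $\alpha:=(\hat v')^{-1}(v_0'(0))\in(0,\hat a)$ is well-defined and $(\hat v')^{-1}$ is $C^1$ on $[1,\hat v'(0)]$. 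Hence $b\in C^1([0,a_0])$ is strictly decreasing with $b(0)=\hat a$, $b(a_0)=\alpha$, proving item (i).

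Next, in the continuation region $\cC$, for each fixed $z$ the ODE $\tfrac{\sigma^2}{2}u_{xx}+\mu_0 u_x-ru=0$ has general solution $A(z)e^{\beta_1 x}+B(z)e^{\beta_2 x}$ with $\beta_1>0>\beta_2$ the roots of $\tfrac{\sigma^2}{2}\beta^2+\mu_0\beta-r=0$. I partition the $z$-axis into three regimes corresponding to the sub-regions $H_{[\alpha,b]}$, $H_{[0,\alpha]}$ and $H_\le$: (a) $z\in[\alpha,\hat a]$, slice $[0,b^{-1}(z)]$, Dirichlet $u(0,z)=\hat v(z)$ and smooth fit $u_z=1$, $u_{xz}=0$ at $x=b^{-1}(z)$; (b) $z\in[0,\alpha)$, slice $[0,a_0]$, Dirichlet at $x=0$ and Neumann $(u_z-u_x)(a_0,z)=0$; (c) $z\in[-a_0,0)$, slice $[-z,a_0]$, default condition $u(-z,z)=0$ and Neumann at $x=a_0$. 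In case (a) the smooth-fit equations give $A'(z)e^{\beta_1 b^{-1}(z)}=-\beta_2/(\beta_1-\beta_2)$ and $B'(z)e^{\beta_2 b^{-1}(z)}=\beta_1/(\beta_1-\beta_2)$; differentiating Dirichlet yields $A'(z)+B'(z)=\hat v'(z)$, producing a consistency identity for $b^{-1}(z)$ which, after using the explicit form $v_0(x)=C_0(e^{\beta_1 x}-e^{\beta_2 x})$ and the smooth-pasting relation $\beta_1^2 e^{\beta_1 a_0}=\beta_2^2 e^{\beta_2 a_0}$ coming from $v_0''(a_0)=0$, collapses precisely to $\hat v'(z)=v_0'(a_0-b^{-1}(z))$, i.e.\ to \eqref{eq:defb}. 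Cases (b) and (c) each reduce to a first-order linear ODE for $A(z)$ (with $B(z)$ determined by the Dirichlet constraint), solvable explicitly with initial data obtained by matching across $z=\alpha$ and $z=0$. Finally extend $u$ to $\cS$ by $u(x,z):=u(x,b(x))+z-b(x)$.

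For verification, $u_z\equiv 1$ in $\cS$ by construction; a chain-rule calculation using the smooth fit $u_{xz}(x,b(x))=0$ and $u_z(x,b(x))=1$ yields $u_x|_\cS(x,z)=u_x(x,b(x))$ and $u_{xx}|_\cS(x,z)=u_{xx}(x,b(x))$, hence $\tfrac{\sigma^2}{2}u_{xx}+\mu_0 u_x-ru=-r(z-b(x))\le 0$ throughout $\cS$. The strict inequality $u_z>1$ in $\cC$ follows from the maximum principle applied to $u_z$, which satisfies the same linear ODE in $x$ with boundary data $\hat v'(z)>1$ for $z<\hat a$ at $x=0$ and $u_z=1$ on $b$. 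The remaining boundary conditions and the $C^1/C^2$ regularity claims (including $u_{xz}$-continuity across $b$) are immediate from the construction. For uniqueness, any competitor $(\tilde u,\tilde b)$ must satisfy the same consistency identity in case (a), forcing $\tilde b=b$, and linear ODE uniqueness then yields $\tilde u=u$ in $\cC$, with the extension formula propagating equality to $\cS$. The main obstacle is the algebraic consistency check in case (a): showing that the three conditions (Dirichlet at $x=0$ and two smooth-fit at $x=b^{-1}(z)$) imposed on the two unknown functions $A(z),B(z)$ are simultaneously compatible precisely when $b$ is given by \eqref{eq:defb}, with the classical smooth-pasting $v_0''(a_0)=0$ playing the essential role.
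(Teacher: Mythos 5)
Your construction follows the same route as the paper — slice-by-slice solution of the $x$-ODE in the three regions $H_{[\alpha,b]}$, $H_{[0,\alpha]}$, $H_\le$, derivation of $b$ from the consistency of the smooth-fit conditions via the identity $\beta_1^2 e^{\beta_1 a_0}=\beta_2^2 e^{\beta_2 a_0}$, and affine extension into $\cS$. However, two verification steps are not handled correctly.

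First, the claim that $(\hat v')^{-1}$ is $C^1$ on the closed interval $[1,\hat v'(0)]$ is false: $\hat v''(\hat a)=0$ by smooth-pasting, so $(\hat v')^{-1}$ has infinite derivative at the endpoint $1$. Consequently $b'(x)=v_0''(a_0-x)/\hat v''(b(x))$ is a $0/0$ indeterminate form as $x\downarrow 0$ (both $v_0''(a_0)=0$ and $\hat v''(\hat a)=0$), and the assertion that $b\in C^1([0,a_0])$ does not follow from your argument. The paper resolves this with a De L'Hopital computation, showing $[b'(0+)]^2=\phi''(0)/\hat v'''(\hat a)$, hence $b'(0+)=\sigma\sqrt{|\beta_1\beta_2|/(2r)}$ finite. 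This step is needed.

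Second, and more substantially, the inequality $\partial_z u>1$ in $\cC$ does \emph{not} follow from the maximum principle as you claim. Your appeal to boundary data "$\hat v'(z)>1$ at $x=0$ and $u_z=1$ on $b$" is only available in the slice $H_{[\alpha,b]}$, where $z\in[\alpha,\hat a]$ and the slice ends at the free boundary $x=b^{-1}(z)$ with $\partial_z u=1$ there. Even there, what works is not the maximum principle per se but the sign structure of the coefficients: the two conditions $\partial_z u=1$ and $\partial_{xz}u=0$ at $x=c(z)$ force $A'(z),B'(z)>0$, hence $\partial_{xxz}u>0$, so $\partial_{xz}u$ is increasing and vanishes at $c(z)$, giving $\partial_{xz}u<0$ and $\partial_z u>1$ for $x<c(z)$. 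But in $H_{[0,\alpha]}$ and $H_\le$ the right-hand boundary of the $x$-slice is $x=a_0$, where the boundary condition on $\partial_z u$ is the Neumann-type $\partial_{zz}u-\partial_{xz}u=0$, not $\partial_z u=1$. A bare maximum-principle argument gives nothing here. The paper's Step 2 has to prove $A''(z)>0$ on $[0,\alpha]$ by a contradiction argument relying on the monotonicity of the source term $g(z)=\hat v'(z)\beta_2 e^{\beta_2 a_0}-\hat v''(z)e^{\beta_2 a_0}$ (which in turn uses $\hat\mu>\mu_0\implies\hat\beta_2<\beta_2$); this yields $\partial_{zz}u\le 0$ in $H_{[0,\alpha]}$, and then $\partial_z u(x,z)\ge\partial_z u(x,\alpha)>1$ using continuity across $z=\alpha$. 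Step 3 uses the explicit formula $u(x,z)=A(0)(e^{\beta_1(x+z)}-e^{\beta_2(x+z)})$ in $H_\le$ and the smooth-pasting relation to get $\partial_{zz}u\le 0$ there. These are real arguments, not immediate consequences of a maximum principle, and your proposal leaves them as a gap.

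A minor slip: in region (b) the constraint at $x=a_0$ leads to a \emph{second}-order ODE for $A$ (equivalently first-order linear for $A'$), not a first-order ODE for $A(z)$; your description is off by one order in that case (region (c) is genuinely first-order for $A$).
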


\begin{proof}
We proceed with the construction of the function $u$ and of the boundary $b$ in four steps, considering first the region $H_{[\alpha,b]}$, then the region $H_{[0,\alpha]}$ and finally the regions $H_\le$ and $\cS$. We emphasise that in steps 1--3 we will produce {\em candidates} for the function $u(x,z)$ (or its derivative $\partial_z u$) and the boundary $b(x)$. In Step 4 we will verify that the candidate pair $(u,b)$ solves the free boundary problem \eqref{eq:fbp}. With a slight abuse of notation, we use $u$ and $b$ to denote those candidates.

We begin with some basic preliminaries. Let $\beta_2<0<\beta_1$ be the solutions of the quadratic characteristic equation
$\tfrac{\sigma^2}{2} \beta^2+ \mu_0 \beta - r=0$.
Since $b$ must be strictly decreasing on $[0,a_0]$, we can equivalently consider its inverse $c(z):=b^{-1}(z)$, which must be continuous, decreasing on some interval $[b(a_0),b(0)]$, with $c(b(a_0))$ to be determined and $c(b(0))=0$.
From the first equation in \eqref{eq:fbp} we must have for $(x,z)\in\cC$ 
\begin{equation}\label{eq:SC0}
u(x,z)=A(z)\e^{\beta_1 x}+ B(z) \e^{\beta_2 x},
\end{equation}
for some maps $A(z)$ and $B(z)$ which we find it convenient to define (with a slight abuse of notation) as $A,B:\overline \cC\to\R$. In particular, in order to guarantee that $u$ satisfies the regularity required in (ii), it must be 
$A,B\in C^1(\overline \cC)\cap C^2(H_\le\cup H_{[0,\alpha]}\cup H_{[\alpha,b]})$.
For $z\geq 0$, the boundary condition at $x=0$ (sixth equation in \eqref{eq:fbp}) leads to $A(z)+B(z)=\hat v(z)$, and by differentiation 
\begin{equation}\label{eq:SC1}
A'(z)+B'(z)=\hat v'(z),\quad z\in[0,b(0)].
\end{equation}
We are now ready to construct the solution to the free-boundary problem.
\smallskip

\noindent
\textbf{Step 1}. Here we take $(x,z)\in H_{[\alpha,b]}$. The fourth equation in \eqref{eq:fbp} yields $\partial_z u(x,b(x))=1$, which can be written equivalently as $\partial_z u(c(z),z)=1$. That is, 
\begin{equation}\label{eq:SC2}
A'(z)\e^{\beta_1 c(z)}+B'(z)\e^{\beta_2 c(z)}=1,\quad z \in [b(a_0),b(0)].
\end{equation} 
We deduce from \eqref{eq:SC1} and \eqref{eq:SC2} that:
\begin{equation}\label{eq:SC3}
A'(z) (\e^{\beta_1 c(z)}- \e^{\beta_2 c(z)})=1- \hat v'(z)\e^{\beta_2 c(z)}.
\end{equation} 

The fifth equation in \eqref{eq:fbp} (so-called smooth-pasting condition) can be written equivalently as $\partial_{zx} u (c(z),z) =0$. That is, \begin{equation}\label{eq:SC4}
A'(z)\beta_1 \e^{\beta_1 c(z)}+B'(z) \beta _2 \e^{\beta_2 c(z)}=0,\quad z \in [b(a_0),b(0)].
\end{equation} 
Combining \eqref{eq:SC4} with \eqref{eq:SC1} yields
\begin{equation}\label{eq:SC5}
A'(z) (\beta_1 \e^{\beta_1 c(z)}- \beta_2 \e^{\beta_2 c(z)})=- \hat v'(z) \beta_2 \e^{\beta_2 c(z)}\quad z \in [b(a_0),b(0)].
\end{equation}

Solving \eqref{eq:SC5} and \eqref{eq:SC3} for $A'(z)$ and equating the two expressions, we find
\begin{equation}\label{eq:SC5.1} 
\hat v'(z)= \frac{\beta_1 \e^{\beta_1 c(z)}- \beta_2 \e^{\beta_2 c(z)}}{(\beta_1-\beta_2)\e^{(\beta_1+\beta_2)c(z)}}.
\end{equation}
Next we show that for each $z\in[b(a_0),b(0)]$ there is a unique $c(z)$ that solves \eqref{eq:SC5.1}. More precisely, we notice that \eqref{eq:SC5.1} was derived only for $z\ge 0$ (because we used \eqref{eq:SC1}). Therefore, as part of the proof we must show that $b(a_0)\ge 0$ (and even $b(a_0)>0$).

Setting
\begin{equation} \label{eq:SC6}
\phi(\ell):=\frac{\beta_1 \e^{\beta_1 \ell}- \beta_2 \e^{\beta_2 \ell}}{(\beta_1-\beta_2)\e^{(\beta_1+\beta_2)\ell}},\quad \ell\in[0,\infty),
\end{equation}
it is immediate to see that $\phi(0)=1$ and simple algebra allows to check that $\phi'> 0$ on $(0,\infty)$ and that $\lim_{+\infty}\phi=+\infty$. Since $\hat v'(z)>1$ for $z\in[0,\hat a)$ and $\hat v'(\hat a)=1$, we then obtain $c(\hat a)=0$. Letting $z$ decrease, starting from $z=\hat a$, the function $c(z)$ increases. That is, $z\mapsto c(z)$ is strictly decreasing in a left-neighbourhood of $\hat a$. Moreover, in such neighbourhood we have 
\begin{equation}\label{eq:SC7}
c(z)=\phi^{-1}\big(\hat v'(z)\big).
\end{equation}
For the inverse function $b(x)=c^{-1}(x)$ we have $b(0)=\hat a$ and $x\mapsto b(x)$ (strictly) decreasing on a right-neighbourhood of $0$. Now we want to show that $b(x)>0$ for $x\in[0,a_0]$.

Recall from \eqref{eq:w} that $v'_0(x)=C_0 (\beta_1\e^{\beta_1 x} - \beta_2 \e^{\beta_2 x})$, that $v_0'(a_0)=1$ and $v_0''(a_0)=0$. From those expressions we deduce 
\begin{equation}\label{eq:C0}
C_0= \frac{1}{\beta_1 \e^{\beta_1 a_0} - \beta_2 \e^{\beta_2 a_0}} \quad\text{ and }\quad \beta_1^2 \e^{\beta_1 a_0}= \beta_2^2 \e^{\beta_2 a_0}.
\end{equation} 
In particular, plugging the second expression into the first one yields $C_0=\beta_2/[\beta_1(\beta_2-\beta_1)]\e^{-\beta_1 a_0}$.
We can thus rewrite \eqref{eq:SC6}, for $\ell \in [0,a_0]$, as
\begin{equation*}
\begin{aligned}
\phi(\ell)=\frac{\left(\beta_1 \e^{\beta_1 \ell}- \beta_2 \e^{\beta_2 \ell}\right)\e^{-(\beta_1+\beta_2)\ell}}{(\beta_1-\beta_2)}=\frac{\beta_2}{\beta_1(\beta_2-\beta_1)}\e^{- \beta_1 a_0}[\beta_1 e^{\beta_1 (a_0-\ell)}-\beta_2 \e^{\beta_2 (a_0-\ell)}]=v_0'(a_0-\ell).
\end{aligned}
\end{equation*}
The equation \eqref{eq:SC5.1} is therefore equivalent to
\begin{equation}\label{eq:SC5.2}
\hat v'(z)=v_0'(a_0-c(z))\iff \hat v'\big(b(x)\big)=v_0'\big(a_0-x\big).
\end{equation}

We notice that $\hat v \geq v_0$ by definition of the problem: any admissible control for the problem with drift $\mu_0$ is admissible for the problem with drift $\hat \mu$ and it gives a weakly larger payoff in the latter because $\hat \mu>\mu_0$.  Therefore $\hat v'(0+)\geq v_0'(0+)$ since $\hat v(0)=v_0(0)=0$.

Arguing by contradiction, let us assume $x_0:=\inf\{x>0:b(x)=0\}\in[0,a_0]$. Then $\hat v'(0+)=v_0'(a_0-x_0)\le v_0'(0+)$, where the final inequality is by concavity of $v_0$. Combining with the inequality from the paragraph above it must be $\hat v'(0+)=v_0'(0+)>0$.
Recalling the ODE solved by the value function of the optimal dividend problem (cf.\ \eqref{eq:fbpdiv}) and using $\hat v(0)=v_0(0)=0$ and $\hat v'(0+)=v_0'(0+)>0$, we deduce
$\hat v''(0+)- v_0''(0+) = 2\sigma^{-2}(\mu_0 - \hat \mu) v_0'(0+) < 0$.
The latter implies $\hat v'(x)-v'_0(x)<0$ for $x\in(0,\eps)$ and sufficiently small $\eps>0$. Thus $\hat v(x)<v_0(x)$ for $x\in(0,\eps)$, which contradicts $\hat v\ge v_0$. Then we conclude $x_0\notin [0,a_0]$, as needed. The latter also implies that $\alpha=b(a_0)$ is the unique solution of $\hat v'(\alpha)=\phi(a_0)=v_0'(0)$.

By construction (cf.\ \eqref{eq:SC7}), $b(x)=[(\hat v')^{-1} \circ \phi](x)$ is continuous for $x\in[0,a_0]$. Moreover, using that $\hat v\in C^2([0,\infty))$ with $\hat v''(z)<0$ for $z\in[0,\hat a)$ we have
\begin{equation*}
b'(x)=\frac{\phi'(x)}{(\hat v''\circ(\hat v')^{-1}\circ \phi)(x)}=\frac{\phi'(x)}{(\hat v''\circ b)(x)}\in (-\infty,0),\quad \text{for $x\in(0,a_0]$}.
\end{equation*}
Letting $x\downarrow 0$ we have $b(0)=\hat a$ and $\hat v''(\hat a)=0$ in the denominator of the equation above. However, also the numerator vanishes. Then, using De L'Hopital's rule, in the limit as $x\downarrow 0$ we have
\begin{equation}\label{eq:DH}
b'(0+)\approx\frac{\phi''(0)}{\hat v'''\big(b(0)\big)b'(0+)}\implies \big[b'(0+)\big]^2=\frac{\phi''(0)}{\hat v'''(\hat a)}.
\end{equation}
Simple algebra yields $\phi'''(0)=-\beta_1\beta_2$. Differentiating once the ODE for $\hat v$ and imposing the boundary conditions $\hat v'(\hat a)=1$ and $\hat v''(\hat a)=0$ yields $\hat v'''(\hat a)=2r/\sigma^2$. Hence, from \eqref{eq:DH} we conclude $b'(0+)=\sigma\sqrt{|\beta_1\beta_2|/(2r)}$ and $b\in C^1([0,a_0])$ as claimed.

Note that \eqref{eq:SC5} (or \eqref{eq:SC3}) together with the explicit expression of $c$ in \eqref{eq:SC7} determine $A'$, on the interval $[\alpha,\hat a]$. They also determine $B'$ on $[\alpha,\hat a]$ by \eqref{eq:SC1} and, finally, $\partial_z u$ on $H_{[\alpha,b]}$. More precisely, \eqref{eq:SC5} and \eqref{eq:SC4} yield
\begin{equation}\label{eq:AB1} 
\begin{aligned}
&A'(z)= \frac{-\beta_2 \e^{\beta_2c(z)}}{\beta_1 \e^{\beta_1 c(z)}- \beta_2 \e^{\beta_2 c(z)}}\hat v'(z)  > 0,\\ 
&B'(z)=(-\beta_1 /\beta_2) \e^{(\beta_1-\beta_2) c(z)}A'(z)= \frac{\beta_1 \e^{\beta_1 c(z)}}{\beta_1 \e^{\beta_1 c(z)}- \beta_2 \e^{\beta_2 c(z)}}\hat v'(z) > 0.
\end{aligned}
\end{equation}
For future reference notice that 
\begin{equation}\label{eq:Aalpha}
A'(\alpha)=-\frac{\beta_2}{\beta_1-\beta_2}\e^{-\beta_1 a_0}.
\end{equation}

Combining the above we have an explicit expression for $\partial_z u(x,z)=A'(z)e^{\beta_1 x}+B'(z)e^{\beta_2 x}$ for $(x,z)\in H_{[\alpha,b]}$. We use that to compute $\partial_{xz} u (x,z)$ for $(x,z)\in H_{[\alpha,b]}$. In particular, for $x<c(z)$, $z\in[\alpha,\hat a]$,  
\begin{equation*} 
\partial_{xz} u (x,z)=  A'(z)\beta_1 \e^{\beta_1 x}+ B'(z)\beta_2 \e^{\beta_2 x}< A'(z)\beta_1 \e^{\beta_1 c(z)}+ B'(z)\beta_2 \e^{\beta_2 c(z)}=0,
\end{equation*}
where the final equality holds due to \eqref{eq:SC4}. Since $\partial_z u(c(z),z) = 1$ (c.f.\ \eqref{eq:SC2}), then $\partial_z u(x,z) > 1$ for $x<c(z)$, $z\in[\alpha,\hat a]$. This verifies the third condition in \eqref{eq:fbp} in the set $\cC\cap H_{[\alpha, b]}$.
Finally, it is a matter of simple algebra to check that $\partial_{xx}u$ and $\partial_{zz}u$ are also continuous on $H_{[\alpha,b]}$.

In this step we have obtained formulae for the coefficients $A'(z)$, $B'(z)$ and the boundary $b(x)$ (or its inverse $c(z)$). Then we have a {\em candidate} expression for $\partial_z u(x,z)$ and for $b(x)$. To emphasise that these are just candidates for now, we adopt the notation
\begin{equation}\label{eq:sum1}
Q_1(x,z):=A'(z)\e^{\beta_1 x}+B'(z)\e^{\beta_2 x}=\partial_z u(x,z),\quad (x,z)\in H_{[\alpha,b]}.
\end{equation}

\noindent\textbf{Step 2}. Here we take $(x,z)\in H_{[0,\alpha]}$.
For $z\in [0,\alpha)$, recall the reflection condition at $(a_0,z)$ as given in the final equation in \eqref{eq:fbp}. Differentiating that condition with respect to $z$ yields $\partial_{zz}u(a_0,z)=\partial_{xz}u(a_0,z)$. Using \eqref{eq:SC0}, the latter reads 
\begin{equation}\label{eq:uzzuxz}
A''(z)\e^{\beta_1 a_0}+B''(z)\e^{\beta_2 a_0} = A'(z)\beta_1 \e^{\beta_1 a_0}+ B'(z)\beta _2 \e^{\beta_2 a_0}. 
\end{equation}
Differentiating also \eqref{eq:SC1} we obtain $B''(z)=\hat v ''(z) - A''(z)$, which we plug into the equation above to obtain
\begin{equation*}
A''(z)\e^{\beta_1 a_0}+ (\hat v ''(z) - A''(z))\e^{\beta_2 a_0} = A'(z)\beta_1 \e^{\beta_1 a_0}+ (\hat v '(z) - A'(z))\beta _2 \e^{\beta_2 a_0}. 
\end{equation*}
Thus, $A'$ satisfies a linear non-homogeneous ODE on the interval $[0,\alpha]$ given by
\begin{equation} \label{eq:SC8}
A''(z)(\e^{\beta_1 a_0}-\e^{\beta_2 a_0})   = A'(z)(\beta_1 \e^{\beta_1 a_0} -\beta _2 \e^{\beta_2 a_0})  + \hat v '(z)\beta _2 \e^{\beta_2 a_0}-\hat v ''(z)\e^{\beta_2 a_0}.
\end{equation}
We impose the boundary condition at $z=\alpha$ given by \eqref{eq:Aalpha}. That ensures that 
\begin{equation}\label{eq:A'cnt}
\text{$A'$ is continuous in $[0,b(x)]$ for every $x\in[0,a_0]$ (hence in $H_{[0,\alpha]}\cup H_{[\alpha,b]}$). }
\end{equation}
Knowledge of $A'$ yields also the function $B'$ thanks to \eqref{eq:SC1}. Therefore, by continuity of $A'$ we deduce that $A''$ is also continuous by \eqref{eq:SC8} and, thanks to \eqref{eq:uzzuxz} and \eqref{eq:SC0},
\begin{equation}\label{eq:contdz}
\text{functions $B'$, $B''$, $\partial_z u$, $\partial_{zz}u$, $\partial_{zx}u$ and $\partial_{zxx}u$ are continuous in $H_{[0,\alpha]}\cup H_{[\alpha,b]}$.}
\end{equation}

Now we analyse properties of $A'$ in more detail. Plugging the expression for $A'(\alpha)$ from \eqref{eq:Aalpha} into \eqref{eq:SC8} and recalling that $c(\alpha)=a_0$, we have  
\begin{equation}\label{eq:A''alpha}
A''(\alpha-)=-\frac{\hat v ''(\alpha)\e^{\beta_2 a_0}}{\e^{\beta_1 a_0}-\e^{\beta_2 a_0}} >0.
\end{equation}

We want to prove that $A''(z)>0$ on $[0,\alpha]$. Arguing by contradiction we assume that $A''$ vanishes on $[0,\alpha)$ and we let $\tilde z$ denote the largest zero of $A''$. Clearly $\tilde z<\alpha$ by \eqref{eq:A''alpha}. By differentiating \eqref{eq:SC8} and taking $z=\tilde z$, we obtain
\begin{equation}\label{eq:A'''} 
A'''(\tilde z)(\e^{\beta_1 a_0}-\e^{\beta_2 a_0}) =g'(z),
\end{equation}
where 
$g(z):= \hat v '(z)\beta _2 \e^{\beta_2 a_0}-\hat v ''(z)\e^{\beta_2 a_0}$. 
We claim (and we prove it later) that $g'(z)<0$ for $z\in[0,\alpha]$. Thus $A'''(\tilde z)<0$ because $\e^{\beta_1 a_0}>\e^{\beta_2 a_0}$. Since $A''(\tilde z)=0$ and $A''$ does not change its sign on $(\tilde z,\alpha)$ (by definition of $\tilde z$), it then follows that $A''(z)<0$ for $z\in(\tilde z,\alpha)$. That contradicts \eqref{eq:A''alpha}.

We want to show that $g$ is strictly decreasing for $z\in[0,\alpha]$. With the notation from \eqref{eq:dnot}, recalling the expression for the value function of the classical dividend problem in \eqref{eq:w} and setting $\hat \beta_j=\beta_j(\hat \mu)$, $j=1,2$ and $\hat C=C(\hat \mu)$, we can express $g$ as
\begin{equation*}
\begin{aligned} 
g(z) &= \hat C \e^{\beta_2 a_0} \left( \beta_2(\hat \beta_1 \e^{\hat \beta_1 z} - \hat \beta_2 \e^{\hat \beta_2 z}) - \hat \beta_1^2 \e^{\hat \beta_1 z} + \hat \beta_2^2 \e^{\hat \beta_2 z}  \right)  \\
&= \hat C \e^{\beta_2 a_0} \left((\beta_2-\hat \beta_1)\hat \beta_1 \e^{\hat \beta_1 z} + \hat \beta_2(\hat \beta_2-\beta_2) \e^{\hat \beta_2 z}  \right),
\end{aligned}
\end{equation*}
on the interval $z\in [0,\hat a)\supset [0,\alpha]$. Differentiating that expression yields
\begin{equation*}
g'(z) = \hat C \e^{\beta_2 a_0} \left((\beta_2-\hat \beta_1)\hat \beta_1^2 \e^{\hat \beta_1 z} + \hat \beta_2^2(\hat \beta_2-\beta_2) \e^{\hat \beta_2 z}  \right),
\end{equation*}
Observing that $\hat \mu > \mu_0\implies\hat \beta_2 < \beta_2$ we deduce that $g'(z)<0$ from the expression above (recall that $\hat C>0$). 

Next, we want to check the third condition in \eqref{eq:fbp}, i.e., $\partial_z u(x,z)>1$ for $(x,z)\in H_{[0,\alpha]}$. By direct calculation we have 
\begin{equation*}
\begin{aligned}
&\partial_{zz}u (x,z) = A''(z)\e^{\beta_1 x}+ (\hat v ''(z) - A''(z))\e^{\beta_2 x},\\
&\partial_{zzx}u (x,z) = A''(z)\beta_1 \e^{\beta_1 x}+ (\hat v ''(z) - A''(z))\beta_2 \e^{\beta_2 x}.
\end{aligned}
\end{equation*}
As shown earlier, for $z\in[0,\alpha]$ we have $A''(z)>0$. Moreover, $\hat v$ is concave and therefore $\hat v''(z)-A''(z)<0$. We deduce for $(x,z)\in H_{[0,\alpha]}$ 
\begin{equation}\label{eq:uz}
\begin{aligned}
\partial_{zz}u (x,z) &= A''(z)\e^{\beta_1 x}+ (\hat v ''(z) - A''(z))\e^{\beta_2 x} \\
&\leq A''(z)\e^{\beta_1 a_0}+ (\hat v ''(z) - A''(z))\e^{\beta_2 a_0}=\partial_{zz}u (a_0,z)=\partial_{zx}u (a_0,z)\le 0,
\end{aligned}
\end{equation}
where in the third equality we use the reflection condition from the final equation in \eqref{eq:fbp} and it remains to prove the final inequality.
For that, it is sufficient to observe that $\beta_2(\hat v''(z)-A''(z))>0$ implies 
\begin{equation*}
\partial_{zzx} u (a_0,z)= A''(z)\beta_1 \e^{\beta_1 a_0}+ (\hat v ''(z) - A''(z))\beta _2 \e^{\beta_2 a_0}>0.
\end{equation*}
Then, thanks to the fact that $A'$ is continuous at $\alpha$, choosing $z=\alpha$ in \eqref{eq:SC5} so that $c(\alpha)=a_0$ (recall $B'(\alpha)=\hat v'(\alpha)-A'(\alpha)$) we have
\begin{equation*}
\partial_{zx}u (a_0,\alpha)=A'(\alpha)\beta_1 \e^{\beta_1 a_0}+ (\hat v '(\alpha) - A'(\alpha))\beta _2 \e^{\beta_2 a_0} =0.
\end{equation*}
Combining these last two expressions yields the final inequality in \eqref{eq:uz}.

Finally, by construction $\partial_z u$ is continuous at $(x,\alpha)$ for all $x\in[0,a_0]$ and the value of $\partial_z u(x,\alpha)>1$ was 	determined in Step 1 of this proof. Therefore, \eqref{eq:uz} implies that $\partial_z u(x,z)>1$ for all $(x,z)\in H_{[0,\alpha]}$ as needed.

As in Step 1, also in this step we have obtained expressions for the coefficients $A'(z)$ an $B'(z)$. We use them to construct a candidate expression for $\partial_z u(x,z)$. To emphasise this fact we set
\begin{equation}\label{eq:sum2}
Q_2(x,z):=A'(z)\e^{\beta_1 x}+B'(z)\e^{\beta_2 x}=\partial_z u(x,z),\quad (x,z)\in H_{[0,\alpha]}.
\end{equation}

\noindent
\textbf{Step 3}. The set $(x,z)\in H_\le$ is analysed in this step. 
The boundary condition $u(x,-x)=0$ reads, for $z=-x$, 
\begin{equation}\label{eq:SC9}
A(z)\e^{-\beta_1 z}+ B(z) \e^{-\beta_2 z}=0,
\end{equation}
and by differentiation
\begin{equation}\label{eq:SC10}
B'(z)=[(\beta_1-\beta_2)A(z)-A'(z) ]\e^{(\beta_2-\beta_1) z}.
\end{equation}
Note that in particular $B(0)=-A(0)$.
The reflection condition $(\partial_z u-\partial_x u)(a_0,z)=0$ (last equation in \eqref{eq:fbp}) implies
\begin{equation}\label{eq:SC11}
A'(z)\e^{\beta_1 a_0}+B'(z)\e^{\beta_2 a_0} = A(z)\beta_1 \e^{\beta_1 a_0}+ B(z)\beta _2 \e^{\beta_2 a_0}.
\end{equation}
Combining \eqref{eq:SC10} and \eqref{eq:SC11}, we obtain
\[
A'(z)\e^{\beta_1 a_0}+[(\beta_1-\beta_2)A(z)-A'(z) ]\e^{(\beta_2-\beta_1) z}\e^{\beta_2 a_0} = A(z)\beta_1 \e^{\beta_1 a_0}-A(z)\e^{(\beta_2-\beta_1) z}\beta _2 \e^{\beta_2 a_0}, 
\]
which leads to
$A'(z)=\beta_1 A(z)$ and thus 
$A(z)= A(0) \e^{\beta_1 z}$ and $B(z)=-A(0)\e^{\beta_2 z}$ for $z\in [-a_0,0)$.
Plugging this expression back into \eqref{eq:SC9} yields the form of $A(z)$ and $B(z)$, and then
\begin{equation}\label{eq:SC12}
u(x,z)= A(0)(\e^{\beta_1(x+z)}- \e^{\beta_2(x+z)}),\quad (x,z)\in H_\le\,.
\end{equation}

In order to determine $A(0)$ we can, for example, impose $\partial_z u(a_0,0+)=\partial_z u(a_0,0-)$, noticing that $\partial_z u(a_0,0+)=Q_2(a_0,0+)>1$ is obtained from Step 2 (see \eqref{eq:sum2}). More precisely, we have
\begin{equation}\label{eq:A0_C0} 
A(0)=\frac{Q_2 (a_0,0+)}{\beta_1\e^{\beta_1 a_0} - \beta_2 \e^{\beta_2 a_0}} >\frac{1}{\beta_1\e^{\beta_1 a_0} - \beta_2 \e^{\beta_2 a_0}}=C_0, 
\end{equation}
where the last equality is \eqref{eq:C0}. Therefore, for $z<0$,
\begin{equation}\label{eq:uzQ2} 
\partial_z u (x,z)= \frac{Q_2 (a_0,0+)}{\beta_1\e^{\beta_1 a_0} - \beta_2 \e^{\beta_2 a_0}} (\beta_1 \e^{\beta_1(x+z)}- \beta_2 \e^{\beta_2(x+z)}). 
\end{equation}
Taking one more derivative and observing that $\beta_1(x+z)\le \beta_1 a_0$, $\beta_2 z\ge 0$ and $\beta_2 x\ge \beta_2 a_0$ for $(x,z)\in H_\le$ we obtain
\begin{equation*}
\begin{aligned}
 \partial_{zz} u (x,z) &= \frac{\partial_z u (a_0,0+)}{\beta_1\e^{\beta_1 a_0} - \beta_2 \e^{\beta_2 a_0}} (\beta_1^2 \e^{\beta_1(x+z)}- \beta_2^2 \e^{\beta_2(x+z)}) \\
 &\leq \frac{\partial_z u (a_0,0+)}{\beta_1\e^{\beta_1 a_0} - \beta_2 \e^{\beta_2 a_0}} (\beta_1^2 \e^{\beta_1 a_0}- \beta_2^2 \e^{\beta_2 a_0})=\frac{\partial_z u (a_0,0+)}{\beta_1\e^{\beta_1 a_0} - \beta_2 \e^{\beta_2 a_0}} \frac{v_0''(a_0)}{C(\mu_0)}=0,
 \end{aligned}
 \end{equation*}
where the penultimate equality follows from \eqref{eq:w}, upon recalling that we are working with $\beta_1=\beta_1(\mu_0)$ and $\beta_2=\beta_2(\mu_0)$, and the final equality is by the smooth pasting condition \eqref{eq:smoothp}. We then conclude that $\partial_z u (x,z)>1$ for $(x,z)\in H_\le$.

It is important to notice that, differently from Steps 1 and 2, here we have obtained an expression for the candidate value function $u(x,z)$, $(x,z)\in H_\le$, instead of a candidate for its $z$-derivative. We are going to use this fact in the fourth and final step of the proof. 
\medskip

\noindent
\textbf{Step 4}. In this step we piece together the functions obtained in the previous steps and confirm that our candidate pair $(u,b)$ solves the free boundary problem \eqref{eq:fbp} as claimed.

We define 
$Q(x,z):=Q_1 1_{H_{[\alpha,b]}}(x,z)+Q_2 1_{H_{[0,\alpha]}}(x,z)+1_\cS(x,z)$ on $[0,a_0]\times [0,\infty)$, with $Q_1$ and $Q_2$ from \eqref{eq:sum1} and \eqref{eq:sum2}, 
By construction and thanks to the smooth pasting condition $\partial_{zx} u(x,b(x))=0$, the functions $Q$ and $\partial_x Q(x,z)$ are continuous on $[0,a_0]\times[0,\infty)$ (see \eqref{eq:A'cnt} and \eqref{eq:contdz}). Its derivative $\partial_{xx} Q$ is continuous on $H_{[0,\alpha]}\cup H_{[\alpha,b]}$ and it belongs to $L^\infty([0,a_0]\times[0,\infty))$. Next
we define 
\begin{equation}\label{eq:SC13}
u(x,z)= u(x,0-)+ \int_0^z Q(x,\zeta)\ud\zeta,\quad\text{for $(x,z)\in [0,a_0]\times [0,\infty)$},
\end{equation}
where $u(x,0-)$ is given by \eqref{eq:SC12} in Step 3. On the set $H_\le$ the function $u(x,z)$ is defined by \eqref{eq:SC12}.

It is then clear that $u\in C(H)$. By Step 3 we have $\partial_x u,\partial_{xx}u,\partial_z u\in C(H_\le)$. By dominated convergence in $[0,a_0]\times[0,\infty)$ we have 
\begin{equation*}
\begin{aligned}
\partial_x u(x,z)=\partial_x u(x,0-)+\int_0^z\partial_x Q(x,\zeta)\ud \zeta\quad\text{and}\quad\partial_{xx} u(x,z)=\partial_{xx} u(x,0-)+\int_0^z\partial_{xx} Q(x,\zeta)\ud \zeta.
\end{aligned}
\end{equation*}
Moreover, it is clear that $\partial_z u(x,z)=Q(x,z)$ on $[0,a_0]\times[0,\infty)$. Then, thanks to the regularity of $Q$ stated above, these derivatives paste continuously with the corresponding derivatives in $H_\le$. That is, $u\in C^1(H)$ and $\partial_{xx}u(x,z)\in C(\overline \cC\cup \cS)$ as required in the free boundary problem.

By construction $u$ solves the free boundary problem in $H_\le$. Next, for $(x,z)\in H_{[0,\alpha]}\cup H_{[\alpha,b]}\cup \cS$ we can compute directly
\begin{equation*}
\begin{aligned}
\big(\tfrac{\sigma^2}{2}\partial_{xx}u\!+\!\mu_0\partial_xu\!-\!r u\big)(x,z)=\big(\tfrac{\sigma^2}{2}\partial_{xx}u\!+\!\mu_0\partial_xu\!-\!r u\big)(x,0-)\!+\!\int_0^z\!\!\big(\tfrac{\sigma^2}{2}\partial_{xx}Q\!+\!\mu_0\partial_xQ\!-\!r Q\big)(x,\zeta)\ud\zeta.
\end{aligned}
\end{equation*}
In particular, for $(x,z)\in H_{[0,\alpha]}\cup H_{[\alpha,b]}$ the right hand side of the equation above vanishes 
using the explicit expressions \eqref{eq:SC12}, \eqref{eq:sum2} and \eqref{eq:sum1}. Instead, for $(x,z)\in\cS$ we can compute
\begin{equation*}
\begin{aligned}
&\big(\tfrac{\sigma^2}{2}\partial_{xx}u+\mu_0\partial_xu-r u\big)(x,z)\\
&=\big(\tfrac{\sigma^2}{2}\partial_{xx}u+\mu_0\partial_xu-r u\big)(x,0-)+\int_0^{b(x)}\big(\tfrac{\sigma^2}{2}\partial_{xx}Q+\mu_0\partial_xQ-r Q\big)(x,\zeta)\ud\zeta-r(z-b(x))\\
&=-r(z-b(x))\le0.
\end{aligned}
\end{equation*}
This shows validity of the first two equations in \eqref{eq:fbp}. The third, fourth, fifth and seventh condition in \eqref{eq:fbp} hold by construction and by properties of \eqref{eq:SC12}, \eqref{eq:sum2} and \eqref{eq:sum1} illustrated in Steps 1--3. For the sixth condition in \eqref{eq:fbp} we observe that $u(0,0)=0$ due to \eqref{eq:SC12} and therefore
 \begin{equation*}
 u(0,z)= u(0,0-)+ \int_0^z Q(0,\zeta)\ud\zeta=\int_0^{\hat a\wedge z} \big(A'(\zeta)+B'(\zeta)\big)\ud \zeta + \int_{\hat a\wedge z}^z 1 \ud \zeta=\hat v(z),
 \end{equation*}
where the final equality is due to \eqref{eq:SC1}, and the facts that $\hat v'=1$ on $[\hat a, \infty)$ and $\hat v(0)=0$.
The final condition in \eqref{eq:fbp} can be checked as follows: for $z\in[-a_0,0]$,
$\big(\partial_z u-\partial_x u\big)(a_0,z)=0$
by \eqref{eq:SC11}; for $z\in[0,\alpha]$
\begin{equation*}
\begin{aligned}
\big(\partial_z u-\partial_x u\big)(a_0,z)&=Q_2(a_0,z)-\partial_x u(a_0,0-)-\int_0^z\partial_x Q_2(a_0,\zeta)\ud \zeta\\
&=Q_2(a_0,z)-\partial_x u(a_0,0-)-\int_0^z\partial_z Q_2(a_0,\zeta)\ud \zeta=Q_2(a_0,0)-\partial_x u(a_0,0-)=0
\end{aligned}
\end{equation*}
where the second equality is by \eqref{eq:uzzuxz} and the final one by \eqref{eq:uzQ2}, upon recalling that $\partial_x u(a_0,0-)=\partial_z u(a_0,0-)$ by construction; finally, for $z\in[\alpha,\infty)$, we have
\begin{equation*}
\begin{aligned}
&\big(\partial_z u\!-\!\partial_x u\big)(a_0,z)=Q(a_0,z)\!-\!\partial_x u(a_0,0-)\!-\!\int_0^z\!\!\!\partial_x Q(a_0,\zeta)\ud \zeta\\
&=1\!-\!\partial_x u(a_0,0-)\!-\!\int_0^\alpha\!\!\partial_z Q_2(a_0,\zeta)\ud \zeta=Q_2(a_0,0)\!-\!\partial_x u(a_0,0-)\!+\!1\!-\!Q_2(a_0,\alpha)=0,
\end{aligned}
\end{equation*}
where in the second equality we use that $Q(a_0,z)=1$ for $z\in[\alpha,\infty)$ and in the final one we use $Q_2(a_0,\alpha)=1$ (thanks to \eqref{eq:A'cnt} and \eqref{eq:contdz}) and $Q_2(a_0,0)=\partial_x u(a_0,0-)$ as above.

\end{proof}
\medskip

The pair $(u,b)$ constructed in the previous proposition is going to be used in a verification result to determine $u_2$ (cf.\ \eqref{eq:u2}) and to prove optimality of $D^*$ for Player 2 (cf.\ \eqref{eq:D*}). Let us start by noticing that for 
\begin{equation*}
D^*_t=\sup_{0\le s\le t}\Big(Y^0_s-X^*_s-b(X^*_s)\Big)^+,\quad\text{for $t\ge 0$},
\end{equation*}
where $Y^0_s=y+\mu_0s+\sigma B_s$. As a result, the pair $(X^*,Y^*)$ introduced in Theorem \ref{thm:NE1} can be expressed as 
\begin{equation}\label{eq:X*Y*}
X^*_t=X^0_t-\sup_{0\le s\le t}\Big(X^0_s-a_0\Big)^+\quad\text{and}\quad
Y^*_t=Y^0_t-\sup_{0\le s\le t}\Big(Y^0_s-X^*_s-b(X^*_s)\Big)^+,
\end{equation}
with $X^0_s=x+\mu_0s+\sigma B_s$.
It is useful to determine some properties of $(L^*,D^*)$  relatively to the pair $(X^*,Z^*)$, where $Z^*:=Y^*-X^*$.  Let us set $\gamma_*\coloneqq\gamma_{X^*}\wedge\gamma_{Z^*}=\inf\{t\ge 0: Z^*_t\le -X^*_t\,\text{or}\,X^*_t\le 0\}$.
\begin{lemma}\label{lem:D*} 
For $b$ constructed in Proposition \ref{prop:SC}
and $(L^*,D^*)$ defined as in Theorem \ref{thm:NE1}, it holds: 
\begin{itemize}
\item[(i)]  $D^*$ is continuous except for a possible jump at time zero of size 
\begin{equation*}
D^*_0=\big(y-x+L^*_0-b(X^*_0)\big)^+=\big(y-x\wedge a_0-b(x\wedge a_0)\big)^+.
\end{equation*}
Therefore, $Z^*$ is continuous except for a possible jump at time zero of size 
\begin{equation*}
Z^*_0-Z^*_{0-}=L^*_0-D^*_0=(x-a_0)^+-\big(y-x\wedge a_0-b(x\wedge a_0)\big)^+.
\end{equation*}
\item[(ii)] $\P(Z^*_{t\wedge\gamma_*}\le b(X^*_{t\wedge\gamma_*}),\:\forall t\ge 0)=1$.
\item[(iii)] For any $S>0$, we have
$\int_{(0,S\wedge\gamma_*]}1_{\{Z^*_t<b(X^*_t)\}}\ud D^*_t=0$, $\P$-a.s. 
\end{itemize}
\end{lemma}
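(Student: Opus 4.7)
The plan is to exploit the representation $D^*_t = \sup_{0\le s\le t} M^*_s$ with $M^*_s := (y-x+L^*_s - b(X^*_s))^+$, together with continuity properties of $L^*$, $X^*$ and $b$. I organise the argument in three independent steps, one per statement, each very short once the right observation is in place.

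For (i), I first recall that $L^*=\xi^0$ is the classical Skorokhod reflection at $a_0$, hence continuous on $(0,\infty)$ with a single jump at $t=0$ of size $L^*_0=(x-a_0)^+$, so that $X^*_0=x\wedge a_0$. Combined with $b\in C([0,\infty))$ from Proposition \ref{prop:SC}, this gives continuity of $s\mapsto M^*_s$ on $(0,\infty)$, and therefore continuity of its running supremum $D^*$ on $(0,\infty)$ as well. Evaluating $D^*_0=M^*_0$ yields the stated jump size $(y-x\wedge a_0 - b(x\wedge a_0))^+$, and the jump of $Z^*=(y-x)+L^*-D^*$ at $0$ is then immediate.

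For (ii), by the definition of $D^*$ and the trivial bound $a^+\ge a$,
\begin{equation*}
D^*_t \,\ge\, M^*_t \,\ge\, y-x+L^*_t - b(X^*_t), \qquad t\ge 0,
\end{equation*}
so rearranging gives $Z^*_t = y-x+L^*_t - D^*_t \le b(X^*_t)$. On $[0,\gamma_*]$ one has $X^*_t\ge 0$, which makes $b(X^*_t)$ finite and the inequality meaningful.

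For (iii), the key input is the standard property that, for a continuous $M^*$ and $D^*_t=\sup_{0\le s\le t}M^*_s$, the random measure $\ud D^*$ on $(0,\infty)$ is supported on $\{t>0: D^*_t=M^*_t\}$, and $D^*$ is constant on each open connected component of $\{D^*>M^*\}$. On the event $\{Z^*_t<b(X^*_t)\}$ one has $y-x+L^*_t-b(X^*_t)<D^*_t$; if $D^*_t>0$ this forces $M^*_t<D^*_t$, placing $t$ in an open interval of constancy of $D^*$, while if $D^*_t=0$ continuity of $D^*$ on $(0,\infty)$ provides a neighborhood on which $D^*\equiv 0$. In either case $\ud D^*$ has no mass at $t$, and the integral vanishes. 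The main obstacle is the rigorous handling of (iii): both the support characterization of $\ud D^*$ and the degenerate case $D^*_t=0$ must be treated carefully, whereas (i)--(ii) reduce to direct computations once the running-supremum representation of $D^*$ is unpacked.
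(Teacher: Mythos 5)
Your proposal is correct and takes essentially the same route as the paper: part (i) from continuity of $L^*$, $X^*$ and $b$; part (ii) by rearranging $D^*_t\ge y-x+L^*_t-b(X^*_t)$; part (iii) by a pathwise argument showing the strict inequality propagates to a right-neighborhood, forcing $D^*$ to be flat there. The one imprecision is in the degenerate case $D^*_t=0$ of (iii): you appeal to ``continuity of $D^*$ on $(0,\infty)$'' to produce a neighborhood on which $D^*\equiv 0$, but continuity of a nondecreasing function does not imply local constancy. What actually closes the argument is continuity of $M^*_s=(y-x+L^*_s-b(X^*_s))^+$: since $D^*_t=0$ and $Z^*_t<b(X^*_t)$ together force $y-x+L^*_t-b(X^*_t)<0$, continuity keeps this quantity strictly negative, hence $M^*_s=0$, on a right-neighborhood of $t$, so the running supremum $D^*$ cannot increase there. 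With that small replacement the proof is complete and matches the paper's.
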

\begin{proof}
Continuity of the mapping $t\mapsto D^*_{t\wedge\gamma_*}$ on $(0,\infty)$ is clear because of continuity of the mapping $t\mapsto L^*_{t\wedge\gamma_*}-b(X^*_{t\wedge\gamma_*})$ on $(0,\infty)$. The expression for $D^*_0$ is also clear by the explicit formula \eqref{eq:D*} and thus the one for $Z^*_0-Z^*_{0-}$ follows immediately. 

The condition in item $(ii)$ can be verified easily upon noticing that $D^*_{t\wedge\gamma_*}\ge z+L^*_{t\wedge\gamma_*}-b(X^*_{t\wedge\gamma_*})$ for every $t\ge 0$. It remains to check the condition in item $(iii)$. 
The argument is carried out path-wise. Fix $\omega\in\Omega$ and assume $Z^*_t(\omega)<b(X^*_t(\omega))$ (with no loss of generality it suffices to consider $t<\gamma_*(\omega)$). Then
\begin{equation*}
Z^*_t(\omega)=z+L^*_t(\omega)-D^*_t(\omega)<b\big(X^*_t(\omega)\big)\implies D^*_t(\omega)>z+L^*_t(\omega)-b\big(X^*_t(\omega)\big).
\end{equation*}
Therefore, by continuity there is $\delta_\omega>0$ such that 
$D^*_t(\omega)>\sup_{t\le s\le t+\delta_\omega}\left(z+L^*_s(\omega)-b\big(X^*_s(\omega)\big)\right)^+$.
The latter implies $D^*_s(\omega)-D^*_t(\omega)=0$ for $s\in[t,t+\delta_\omega)$ which proves the claim.
\end{proof}

Next we state our verification result.
\begin{proposition}\label{prop:D*}
Let $(u,b)$ be the pair constructed in Proposition \ref{prop:SC} and recall $u_2$ from \eqref{eq:u2} and \eqref{eq:extu2}. Set $u(x,z)=u(a_0,z)$ for $x>a_0$ and $z\ge -x$. Then, 
$u(x,z)=u_2(x,z)$, for $x\in[0,\infty)$ and $z\ge -x$.
Moreover, $D^*$ as in \eqref{eq:D*} is an optimal control, i.e., $u_2(x,z+x)=v_2(x,y;L^*)=\cJ^2_{x,y}(D^*,L^*)$ for all $(x,y)\in[0,\infty)^2$.
\end{proposition}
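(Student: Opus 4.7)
The strategy is a verification argument based on a generalised Itô formula applied to $e^{-rt}u(X^*_t, Z^{L^*,D}_t)$, where $Z^{L^*,D}_t := Y^D_t - X^*_t = (y-x) + L^*_t - D_t$. Since for $x > a_0$ the extension $u(x,z) := u(a_0,z)$ is set precisely to match \eqref{eq:extu2}, it suffices to prove $u(x,z)=u_2(x,z)$ on $H$. Under $L^*$, the process $X^*$ is an aBm reflected at $a_0$ and absorbed at $0$, while $Z^{L^*,D}$ is a bounded-variation process with \emph{no} Brownian component; the pair $(X^*, Z^{L^*,D})$ lives in $H$ up to $\gamma_* := \gamma_{X^*}\wedge \gamma_Z$.

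Fix an arbitrary admissible $D\in\cD(y)$, abbreviate $Z:=Z^{L^*,D}$ and $\mathcal{L}f:=\tfrac{\sigma^2}{2}\partial_{xx}f+\mu_0\partial_x f-rf$. Applying the generalised Itô formula to $e^{-rt}u(X^*_t,Z_t)$ on $[0,\tau_n\wedge\gamma_*]$ with $\tau_n:=\inf\{t\ge 0:Z_t\ge n\}$, and taking expectations (so that $\int_0^\cdot e^{-rs}\sigma\partial_x u\,\ud B_s$ drops out as a true martingale, since $\partial_x u$ is bounded on $H\cap\{z\le n\}$), produces
\begin{align*}
u(x,z) &= \bar\E_{x,y}\!\left[e^{-r(\tau_n\wedge\gamma_*)}u(X^*_{\tau_n\wedge\gamma_*},Z_{\tau_n\wedge\gamma_*})\right] - \bar\E_{x,y}\!\left[\int_0^{\tau_n\wedge\gamma_*}e^{-rt}\mathcal{L}u(X^*_t,Z_t)\,\ud t\right] \\
&\quad - \bar\E_{x,y}\!\left[\int_0^{\tau_n\wedge\gamma_*}e^{-rt}(\partial_z u-\partial_x u)(X^*_t,Z_t)\,\ud L^{*,c}_t\right] + \bar\E_{x,y}\!\left[\int_{[0,\tau_n\wedge\gamma_*]}e^{-rt}\Delta^D_t u\right],
\end{align*}
where $\Delta^D_t u$ collects the continuous contribution $-\partial_z u(X^*_t,Z_t)\,\ud D^c_t$ together with the jump increments $u(X^*_{t-},Z_{t-})-u(X^*_{t-},Z_{t-}-\Delta D_t)=\int_{Z_{t-}-\Delta D_t}^{Z_{t-}}\partial_z u(X^*_{t-},\zeta)\,\ud\zeta$. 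The first two equations in \eqref{eq:fbp} yield $\mathcal{L}u\le 0$ on $H$; the reflection identity $(\partial_z u-\partial_x u)(a_0,z)=0$ combined with $L^{*,c}$ charging only $\{X^*=a_0\}$ annihilates the $L^*$-term; and $\partial_z u\ge 1$ on $H$ gives $\int e^{-rt}\Delta^D_t u\ge -\int e^{-rt}\,\ud D_t$. Letting $n\to\infty$ (so $\tau_n\wedge\gamma_*\to\gamma_*$ by the linear-in-time growth of $Z$), using the boundary conditions $u(0,z)=\hat v(z)$ and $u(x,-x)=0$ (which produce exactly the terminal payoffs at $\gamma_{X^*}$ and $\gamma_Z$, respectively), and a routine bound to dispose of the contribution on $\{\gamma_*=\infty\}$ via the linear growth of $u$ in $z$, we obtain $u(x,z)\ge \cJ^2_{x,y}(D,L^*)$.

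For $D=D^*$ each inequality above becomes an equality: Lemma \ref{lem:D*}(ii) gives $(X^*,Z^*)\in\overline\cC$, where $\mathcal{L}u=0$; Lemma \ref{lem:D*}(iii) implies that $D^{*,c}$ only charges $\{Z^*=b(X^*)\}$, where $\partial_z u=1$ by the fourth equation in \eqref{eq:fbp}; and the unique possible jump of $D^*$, at time $0$ and of magnitude $\Delta D^*_0$ described in Lemma \ref{lem:D*}(i), moves $Z$ along the set $\cS$ on which $\partial_z u\equiv 1$, so $\int_{Z_{0-}-\Delta D^*_0}^{Z_{0-}}\partial_z u\,\ud\zeta=\Delta D^*_0$ exactly. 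Therefore $u(x,z)=\cJ^2_{x,y}(D^*,L^*)=v_2(x,y;L^*)=u_2(x,z)$, which simultaneously identifies $u$ with $u_2$ and proves optimality of $D^*$.

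The delicate point is the generalised Itô formula itself: $u$ is only $C^1$ on $H$, with $\partial_{xx}u$ locally bounded but potentially discontinuous across the free boundary $\{z=b(x)\}$ and the internal interfaces $\{z=0\}$ and $\{z=\alpha\}$. Because the bounded-variation component $Z$ carries no Brownian noise, the quadratic variation in the formula only involves $\partial_{xx}u$ and $\ud\langle X^*\rangle=\sigma^2\,\ud t$; the three interfaces are $C^1$ curves in the $(x,z)$-plane, the $X^*$-direction crosses them transversally, and the process spends Lebesgue-null time on each of them. Hence a standard mollification of $u$ on compact subsets of $H$, followed by classical Itô and dominated convergence with the $L^\infty_{\mathrm{loc}}$ bound on $\partial_{xx}u$, produces the formula without spurious local-time contributions. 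The remaining localisation and long-time decay issues are routine in view of the explicit linear growth of $u$ in $z$ on $\cS$ and the presence of discounting.
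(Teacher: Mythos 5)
Your proposal follows essentially the same verification route as the paper: apply an It\^o/Dynkin-type formula to $\e^{-rt}u(X^*_t,Z_t)$, use $\cA u\le 0$, the Neumann identity $(\partial_z u-\partial_x u)(a_0,\cdot)=0$ together with the Skorokhod property of $L^*$, and $\partial_z u\ge 1$ to derive $u\ge\cJ^2_{x,y}(D,L^*)$ for arbitrary $D$, then turn every inequality into an equality for $D^*$ via Lemma \ref{lem:D*}. The boundary identifications $u(0,z)=\hat v(z)$ and $u(x,-x)=0$ producing the correct terminal payoffs, and the remark that the time-zero jump of $D^*$ traverses the region $\cS$ where $\partial_z u\equiv 1$, are exactly the paper's observations. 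So this is not a different proof, just the same one with different packaging of the technical lemmas.

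Two technical points are worth flagging, though neither invalidates the argument. First, the one non-routine step you sketch — that mollification produces the It\^o formula without spurious local-time corrections because the process spends Lebesgue-null time on $\{z=b(x)\}$ — is precisely what the paper proves rather than asserts: equation \eqref{eq:BV} derives the null-occupation property from the occupation-time Lemma \ref{lem:BV} applied to $M=X^0$ and the BV process $\nu=c(Z^D)+L^*$, and it is this fact that feeds into the cited change-of-variable result \cite[Thm.\ 2.1]{Cai}. You invoke "transversality" but do not supply the occupation-time argument; the dominated-convergence step in your mollification scheme requires exactly that fact, so it should be spelled out. Second, your localisation is by $\tau_n=\inf\{t:Z_t\ge n\}$ alone; on $\{\tau_n\wedge\gamma_*=\infty\}$ the It\^o formula on $[0,\tau_n\wedge\gamma_*]$ is not well-posed, so you need an additional deterministic time cutoff $t$ (as the paper does, integrating on $[0,t\wedge\gamma]$ and then sending $t\to\infty$ with Fatou for the inequality and monotone convergence for the equality). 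Relatedly, for the $D^*$ equality the correct reason that the boundary term vanishes is compactness of $\overline\cC$ — where $(X^*,Z^*)$ lives after time $0$ — so that $u$ is bounded there, rather than the linear growth of $u$ in $z$ on $\cS$, which is the relevant fact only along an arbitrary competitor $D$.
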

\begin{proof} 
In Proposition \ref{prop:SC} we showed that $u$ solves the free boundary problem \eqref{eq:fbp}. 
Take an arbitrary control $D \in \cD(x+z)$ and take the admissible pair $(D,L^*)$. Denote $Z^D=Z^{L^*,D}$, $\gamma=\gamma_Z\wedge\gamma_{X^*}$ and $\cA:=\frac{\sigma^2}{2}\partial_{xx}+\mu_0\partial_x-r$, for simplicity. Fix $(x,z)\in H$. We wish to apply Dynkin's formula to calculate $\E_{x,z}[\e^{-r(t\wedge\gamma)}u(X^*_{t\wedge\gamma},Z^D_{t\wedge\gamma})]$. This can be done, for example, invoking \cite[Thm.\ 2.1]{Cai}. We must notice that 
$(X^*_{s\wedge\gamma},Z^D_{s\wedge\gamma})\in H$, for all $s\in [0,\infty)$ and, for any $S>0$, 
\begin{equation}\label{eq:BV}
\begin{aligned}
&\int_0^S\P_{x,z}\big( Z^D_s= b(X^*_s),Z^D_s> \alpha\big)\ud s=\int_0^S\P_{x,z}\big(X^*_s=c(Z^D_s),Z^D_s>\alpha\big)\ud s\\
&=\int_0^S\P_{x,z}\big(X^0_s=c(Z^D_s)+L^*_s,Z^D_s>\alpha\big)\ud s=0,
\end{aligned}
\end{equation}
where the final equality holds due to Lemma \ref{lem:BV}. Equation \eqref{eq:BV} guarantees Assumption A.1 in \cite[Thm.\ 2.1]{Cai}, while the other assumptions are satisfied in our set-up thanks to the regularity of $u$ and $b$.

Applying Dynkin's formula, up to a standard localisation argument that makes the stochastic integral a martingale, we obtain
\begin{equation}\label{eq:D1}
\begin{aligned}
&\E_{x,z}\Big[\e^{-r(t\wedge\gamma)}u(X^*_{t\wedge\gamma},Z^D_{t\wedge\gamma})\Big]\\
&=u(x,z)+\E_{x,z}\Big[\int_0^{t\wedge\gamma}\e^{-rs}(\cA u)(X^*_s,Z^D_s)\ud s\Big]\\
&\:\:+\E_{x,z}\Big[\int_0^{t\wedge\gamma}\e^{-rs}\big(\partial_z u-\partial_x u\big)(X^*_s,Z^D_s)\ud L^{*,c}_s-\int_0^{t\wedge\gamma}\e^{-rs}\partial_z u(X^*_s,Z^D_s)\ud D^c_s\Big]\\
&\:\:+\E_{x,z}\Big[\sum_{s\in[0,t\wedge\gamma]}\e^{-rs}\big(u(X^*_s,Z^D_s)-u(X^*_{s-},Z^D_{s-})\big)\Big],
\end{aligned}
\end{equation}  
where $(L^{*,c},D^c)$ is the continuous part of the pair $(L^*,D)$. Thanks to \eqref{eq:BV} we have for a.e.\ $s\in [0,\gamma(\omega)]$
\begin{equation}\label{eq:Au}
(\cA u)\big(X^*_s(\omega),Z^D_s(\omega)\big)=(\cA u)\big(X^*_s(\omega),Z^D_s(\omega)\big)1_{\{(X^*_s(\omega),Z^D_s(\omega))\in\cS\}}\le 0,
\end{equation}
where the first equality is by the first equation in \eqref{eq:fbp} and the inequality by the second equation therein.

Jumps of $D$ do not affect the dynamics of $X^*$ and therefore, by definition of $L^*$ and the fact that $X^*_{0-}\le a_0$ we have $L^*_t=L^{*,c}_t$ for $t\ge 0$. The sum of jumps then reads
\begin{equation*}
\begin{aligned}
&\sum_{s\in[0,t\wedge\gamma]}\e^{-rs}\big(u(X^*_s,Z^D_s)-u(X^*_{s-},Z^D_{s-})\big)
=\sum_{s\in[0,t\wedge\gamma]}\e^{-rs}\big(u(X^*_s,Z^D_s)-u(X^*_{s},Z^D_{s-})\big)
\\
&=-\sum_{s\in[0,t\wedge\gamma]}\e^{-rs}\int_{0}^{|\Delta Z^D_s|}\partial_z u(X^*_s,Z^D_{s-}-\zeta)\ud \zeta\le -\sum_{s\in[0,t\wedge\gamma]}\e^{-rs}\Delta D_s,
\end{aligned}
\end{equation*}
where we use that $\Delta Z^D_s=-\Delta D_s\le 0$ and the third and fourth equation in \eqref{eq:fbp}. By the same two conditions we also obtain
\[
\int_0^{t\wedge\gamma}\e^{-rs}\partial_z u(X^*_s,Z^D_s)\ud D^c_s\ge \int_0^{t\wedge\gamma}\e^{-rs}\ud D^c_s.
\]
Combining these terms we obtain 
\begin{equation}\label{eq:jumps}
\begin{aligned}
&\sum_{s\in[0,t\wedge\gamma]}\e^{-rs}\big(u(X^*_s,Z^D_s)-u(X^*_{s-},Z^D_{s-})\big)-\int_0^{t\wedge\gamma}\e^{-rs}\partial_z u(X^*_s,Z^D_s)\ud D^c_s\\
&\le -\sum_{s\in[0,t\wedge\gamma]}\e^{-rs}\Delta D_s-\int_0^{t\wedge\gamma}\e^{-rs}\ud D^c_s=-\int_{[0,t\wedge\gamma]}\e^{-rs}\ud D_s.
\end{aligned}
\end{equation}
Finally, the last equation in \eqref{eq:fbp} gives
\begin{equation}\label{eq:refl}
\begin{aligned}
&\int_0^{t\wedge\gamma}\e^{-rs}\big(\partial_z u-\partial_x u\big)(X^*_s,Z^D_s)\ud L^{*,c}_s=\int_0^{t\wedge\gamma}\e^{-rs}\big(\partial_z u-\partial_x u\big)(a_0,Z^D_s)\ud L^{*}_s=0,
\end{aligned}
\end{equation}
where we used that $\ud L^{*,c}_s=\ud L^{*}_s=1_{\{X^*_s=a_0\}}\ud L^{*}_s$ for $s>0$ in the first equality.

Combining \eqref{eq:D1}, \eqref{eq:Au}, \eqref{eq:jumps} and \eqref{eq:refl} we obtain
\begin{equation*}
\begin{aligned}
u(x,z)\ge \E_{x,z}\Big[\int_{[0,t\wedge\gamma]}\e^{-rs}\ud D_s+e^{-r(t\wedge\gamma)}u(X^*_{t\wedge\gamma},Z^D_{t\wedge\gamma})\Big].
\end{aligned}
\end{equation*}
On the event $\{\gamma\le t,\gamma_Z>\gamma_X\}$ we have $u(X^*_{t\wedge\gamma},Z^D_{t\wedge\gamma})=u(0,Z^D_{\gamma_X})=\hat v (Z^D_{\gamma_X})$ by the sixth equation in \eqref{eq:fbp}. On the event $\{\gamma\le t,\gamma_Z\le \gamma_X\}$ we have $u(X^*_{t\wedge\gamma},Z^D_{t\wedge\gamma})=u(X^*_{\gamma_Z},-X^*_{\gamma_Z})=0$ by the seventh equation in \eqref{eq:fbp}. Therefore
\begin{equation*}
\begin{aligned}
&\e^{-r(t\wedge\gamma)}u(X^*_{t\wedge\gamma},Z^D_{t\wedge\gamma})=1_{\{\gamma\le t,\gamma_Z>\gamma_X\}}\e^{-r\gamma_X}\hat v (Z^D_{\gamma_X})+1_{\{\gamma>t\}}\e^{-rt}u(X^*_{t},Z^D_{t})\\
&\ge 1_{\{\gamma\le t,\gamma_Z>\gamma_X\}}\e^{-r\gamma_X}\hat v (Z^D_{\gamma_X}),
\end{aligned}
\end{equation*}
where the final inequality uses $u\ge 0$ which is due to $u(x,-x)=0$ and $\partial_z u\ge 1$. Then  
\begin{equation*}
\begin{aligned}
u(x,z)\ge \E_{x,z}\Big[1_{\{t\ge \gamma\}}\Big(\int_{[0,\gamma_X\wedge\gamma_Z]}\e^{-rs}\ud D_s+1_{\{\gamma_Z>\gamma_X\}}\e^{-r\gamma_X}\hat v (Z^D_{\gamma_X})\Big)\Big].
\end{aligned}
\end{equation*}
Letting $t\to \infty$, using Fatou's lemma and noticing that $\P_{x,z}(\gamma_X\wedge\gamma_Z<\infty)=1$ yields
\begin{equation}\label{eq:ver0}
\begin{aligned}
u(x,z)\ge \E_{x,z}\Big[\int_{[0,\gamma_X\wedge\gamma_Z]}\e^{-rs}\ud D_s+1_{\{\gamma_Z>\gamma_X\}}\e^{-r\gamma_X}\hat v (Z^D_{\gamma_X})\Big].
\end{aligned}
\end{equation}

Let us now consider the pair $(L^*,D^*)$ with $(D^*_t)_{t\ge 0}$ as in \eqref{eq:D*}. Denote the controlled dynamics $(X^*,Z^*)=(X^{L^*},Z^{L^*,D^*})$ and the associated stopping time $\gamma_*:=\gamma_{Z^*}\wedge\gamma_{X^*}$. The controlled process is bound to evolve in $\overline \cC$ in the random time-interval $(0,\gamma_*]$, thanks to Lemma \ref{lem:D*}. Then, we can repeat the arguments from above based on Dynkin's formula but now \eqref{eq:Au} reads $(\cA u)(X^*_s(\omega),Z^*_s(\omega))=0$ for $s\in (0,\gamma_*(\omega)]$ and the inequality in \eqref{eq:jumps} becomes an equality, due to Lemma \ref{lem:D*}-(iii) and $\partial_z u(x,b(x))=1$ (see the fourth equation in \eqref{eq:fbp}). Thus we obtain
\begin{equation*}
\begin{aligned}
u(x,z)&= \E_{x,z}\Big[\int_{[0,t\wedge\gamma_*]}\e^{-rs}\ud D^*_s+1_{\{\gamma_*\le t,\gamma_{Z^*}>\gamma_{X^*}\}}\e^{-r\gamma_{X^*}}\hat v (Z^D_{\gamma_{X^*}})+1_{\{\gamma_*>t\}}\e^{-rt}u(X^*_{t},Z^*_{t})\Big].
\end{aligned}
\end{equation*}
Now we let $t\to \infty$. Since $(X^*_t,Z^*_t)\in \overline\cC$ for all $t\in (0,\gamma_*]$ and $\overline \cC$ is compact, then $u$ is bounded on $\overline \cC$ and clearly
\begin{equation*}
\lim_{t\to\infty}\E_{x,z}\Big[1_{\{\gamma_*>t\}}\e^{-rt}u(X^*_{t},Z^*_{t})\Big]=0.
\end{equation*}
Monotone convergence also implies
\begin{equation}\label{eq:ver1}
\begin{aligned}
u(x,z)&=\lim_{t\to\infty}\E_{x,z}\Big[\int_{[0,t\wedge\gamma_*]}\e^{-rs}\ud D^*_s+1_{\{\gamma_*\le t,\gamma_{Z^*}>\gamma_{X^*}\}}\e^{-r\gamma_{X^*}}\hat v (Z^D_{\gamma_{X^*}})\Big]\\
&=\E_{x,z}\Big[\int_{[0,\gamma_{X^*}\wedge\gamma_{Z^*}]}\e^{-rs}\ud D^*_s+1_{\{\gamma_{Z^*}>\gamma_{X^*}\}}\e^{-r\gamma_{X^*}}\hat v (Z^D_{\gamma_{X^*}})\Big],
\end{aligned}
\end{equation}
where, in particular, we used that for each $\omega\in\Omega$ there is $T_\omega>0$ sufficiently large that $\gamma_*(\omega)<t$ for all $t\ge T_\omega$ and therefore 
\begin{equation*}
\int_{[0,t\wedge\gamma_*]}\e^{-rs}\ud D^*_s(\omega)=\int_{[0,\gamma_*]}\e^{-rs}\ud D^*_s(\omega),\quad \text{for all $t\ge T_\omega$}.
\end{equation*}

Combining \eqref{eq:ver0} and \eqref{eq:ver1} we obtain that $u=u_2$ on $H$ and optimality of $D^*$. Notice that the equivalence $u(x,z)=u_2(x,z)$ extends to all $x\in[0,\infty)$ and $z\ge -x$ because of \eqref{eq:extu2}.
\end{proof}

\begin{remark}\label{rem:ineq_v2_v0}
Recalling that $v_0(a_0)=C_0(\e^{\beta_1 a_0} - \e^{\beta_2 a_0})$, it follows from \eqref{eq:SC12} and \eqref{eq:A0_C0}, that 
$v_2(a_0,a_0)=u_2(a_0,0)>v_0(a_0)$.
Using that $\partial_z u_2(a_0,z)\geq 1$ for $z\geq 0$ and $v_0'(y)=1$ for $y\geq a_0$, we deduce that 
$v_2(a_0,y)=u_2(a_0,y-a_0)>v_0(y)$,for all $y\geq a_0$.
\end{remark}
\medskip

Recalling the discussion after Theorem \ref{thm:NE1}, the proposition above has established that when Player 1 uses the strategy $\Phi^*$ from \eqref{eq:phipsi*}, Player 2's best response is the strategy $\Psi^*$. In the next lemma we show the viceversa, i.e., when Player 2 uses the strategy $\Psi^*$, then Player 1's best response is to use $\Phi^*$. That will establish that $(\Phi^*,\Psi^*)$ is a Nash equilibrium in the game with asymmetric endowment, thanks to Lemma \ref{lem:deviations}-(iii).

\begin{lemma}\label{lem:Phi}
Assume Player 2 uses the strategy $\Psi^*$. Then for every $y>x\ge 0$, Player 1's best-response is the strategy $\Phi^*$, i.e., for $L^*=\Phi^*(\,\cdot\,,B)$ we have
\begin{equation}
v_1(x,y;\Psi^*)=\sup_{L \in \cD(x)}\cJ^1_{x,y}(L,\Psi^*(\,\cdot\,,B,L))=\cJ^1_{x,y}(L^*,\Psi^*(\,\cdot\,,B,L^*)).
\end{equation}
\end{lemma}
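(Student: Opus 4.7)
The plan is to exploit the fact that the strategy $\Psi^*$ is designed precisely to guarantee Player~2's survival beyond Player~1, \emph{whatever} Player~1 does. Once this survival property is established, Player~1's payoff collapses to the integral of dividends stopped at her own default time, i.e.\ to the classical single-agent dividend problem with drift $\mu_0$, for which $L^*=\xi^0$ is optimal with value $v_0(x)$.

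Concretely, I would first fix an arbitrary $L\in\cD(x)$ and let $D$ denote the control induced by $\Psi^*$ when Player~1 plays $L$, namely
\[
D_t=\sup_{0\le s\le t}\bigl(y-x+L_s-b(X^L_s)\bigr)^+,\qquad Y^D_t=y+\mu_0 t+\sigma B_t-D_t.
\]
The key pathwise estimate to prove is
\[
Y^D_t-X^L_t\ \ge\ \min(\alpha,\,y-x)\ >\ 0,\qquad \forall\,t\in[0,\gamma_X],\ \P\text{-a.s.}
\]
Properties (iii)--(iv) of Theorem~\ref{thm:NE1} give $b\ge\alpha$ on $[0,\infty)$, while the convention $b\equiv+\infty$ on $(-\infty,0)$ annihilates any contribution of $\{X^L_s<0\}$ to the sup defining $D_t$. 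Combined with the monotonicity of $L$, this yields
\[
D_t\ \le\ \sup_{0\le s\le t}\bigl(y-x+L_s-\alpha\bigr)^+\ =\ (y-x+L_t-\alpha)^+,
\]
so that $Y^D_t-X^L_t=y-x+L_t-D_t\ge\min(y-x+L_t,\alpha)\ge\min(y-x,\alpha)>0$, using $y>x$ and $\alpha>0$. Since $X^L_t\ge 0$ on $[0,\gamma_X]$, the estimate forces $Y^D_t>0$ up to and including $t=\gamma_X$, and hence $\gamma_Y>\gamma_X$ almost surely on $\{\gamma_X<\infty\}$.

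With $1_{\{\gamma_Y<\gamma_X\}}=0$ $\P$-a.s., Player~1's payoff simplifies to
\[
\cJ^1_{x,y}\bigl(L,\Psi^*(\,\cdot\,,B,L)\bigr)\ =\ \E_{x,y}\Big[\int_{[0,\gamma_X]}\e^{-rs}\,\ud L_s\Big],
\]
which is exactly the objective of the classical dividend problem \eqref{eq:JS}--\eqref{eq:C} with drift $\mu_0$. Therefore the right-hand side is bounded above by $v_0(x)$, with equality attained at $L=L^*=\xi^0$. Applying the same survival estimate to the equilibrium pair $(L^*,D^*)$ (which is also implicit in Lemma~\ref{lem:D*}) yields $\cJ^1_{x,y}(L^*,\Psi^*(\,\cdot\,,B,L^*))=v_0(x)$, proving the lemma.

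The main obstacle is conceptual rather than technical: one has to recognise that the uniform lower bound $b\ge\alpha>0$, combined with the strict initial asymmetry $y>x$, produces a \emph{uniform} survival gap between the two firms that is insensitive to Player~1's choice of $L$. Once this observation is made, the verification completely sidesteps the free-boundary machinery of Proposition~\ref{prop:SC} on Player~1's side, reducing the best-response problem to the classical dividend value bound.
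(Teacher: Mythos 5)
Your proof is correct and follows essentially the same approach as the paper's: both establish the pathwise survival gap $Y^D_t - X^L_t \ge \alpha\wedge(y-x+L_t) > 0$ (uniform over all $L\in\cD(x)$) from the uniform lower bound $b\ge\alpha$, then conclude $\gamma_Y>\gamma_X$ a.s., reducing Player~1's problem to the classical dividend problem with drift $\mu_0$ for which $L^*=\xi^0$ attains the optimum $v_0(x)$. Your write-up simply makes the intermediate inequality $D_t\le(y-x+L_t-\alpha)^+$ and the appeal to monotonicity of $L$ slightly more explicit than the paper does.
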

\begin{proof}
Recalling the expression of $\Psi^*$ in \eqref{eq:maps} and using that $b(x)\ge \alpha>0$, for all $x\ge 0$, then $\Psi^*(t,B,L)\le (y-x +L_t-\alpha)^+$ for all $t\ge 0$ and any control $L$. That implies for Player 2's dynamics:
\begin{equation}\label{eq:last}
\begin{aligned}
Y^*_t=y\!+\!\mu_0 t\!+\!\sigma B_t\!-\! \Psi^*(t,B,L)=X^0_t\!+\!(y\!-vx)\!-\!\Psi^*(t,B,L)\ge X^L_t\!+\!(y\!-\!x\!+\!L_t)\wedge \alpha > X^L_t,
\end{aligned}
\end{equation}
for all $t\ge 0$, any realisation $B(\omega)=(B_s(\omega))_{s\ge 0}$ of the Brownian path and any choice of Player 1's control $L=(L_s)_{s\ge 0}$. Therefore, 
$\gamma_{Y^*}>\gamma_{X^L}$, $\P_{x,y}$-a.s.\ and Player 1's expected payoff reduces to
\begin{equation*}
\cJ^1_{x,y}(L,\Psi^*)=\E_{x,y}\Big[\int_{[0,\gamma_X]}\e^{-r t}\ud L_t\Big].
\end{equation*}
Thus, Player 1 is faced with the classical dividend problem and $L^*$ is optimal.
\end{proof}

Combining the results above we have a simple proof of Theorem \ref{thm:NE1}.
\begin{proof}[Proof of Theorem \ref{thm:NE1}.]
Fix $(x,y) \in [0,\infty)^2$ with $y>x$. Recall that the pair $(\Psi^*,\Phi^*)$ induces the pair of controls $(L^*,D^*)$ with $L^*=\Phi^*(\,\cdot\,,B)$ and $D^*=\Psi^*(\, \cdot,B,L^*)$. From Proposition \ref{prop:D*}, $D^*$ is optimal in the control problem with value $v_2(x,y;L^*)$, and  we have
$\cJ^2_{x,y}(\Psi^*,\Phi^*)=\cJ^2_{x,y}(D^*,L^*)\ge\cJ^2_{x,y}(D,L^*)$, $\forall D \in \cD(y)$.
From Lemma \ref{lem:Phi} we have 
$\cJ^1_{x,y}(\Phi^*,\Psi^*)=\cJ^1_{x,y}(L^*,D^*)\ge\cJ^1_{x,y}(L,\Psi^*)$, $\forall L \in \cD(x)$.
We conclude by Lemma \ref{lem:deviations}-(iii) that $(\Phi^*,\Psi^*)$ is a Nash equilibrium of the game starting at $(x,y)$. 
\end{proof}

\begin{remark}
The equilibrium constructed in this section turns out to be of the form {\em control vs.\ strategy}, in the sense that Player 1 uses a pure control and Player 2 uses a pure strategy (cf.\ Definitions \ref{def:random_strategies} and \ref{def:random_controls}). However, one must notice that it is a Nash equilibrium of the game introduced in Section \ref{sec:strategies} where both players have access to the larger class of randomised strategies, according to Definition \ref{def:NE3}. Thanks to Lemma \ref{lem:deviations}-(iii), it was sufficient for us to prove that there is no profitable unilateral deviation from the pair $(\Phi^*,\Psi^*)$ in the class of controls in order to deduce that there is no such deviation in the class of randomised strategies. That allowed us to rely on singular control techniques to complete the argument.

We do not claim that the equilibrium we found is unique but it is an equilibrium for all parameter specifications ($\mu_0$, $\hat \mu$, $\sigma$, $r$) and all values of the initial states $y>x\ge 0$. The question as to whether an equilibrium exists in which Player 1 uses a strategy and Player 2 uses a control (both either pure or randomised) is open when $y>x$, especially if we seek an equilibrium valid for all parameter choices and initial states. The difficulty in this case arises from the fact that Player 2 (who has larger initial endowment) can always become monopolist by waiting until Player 1 defaults (for example by paying less than $y-x>0$ in dividends until $\gamma_X$). Therefore, it would seem that a mapping $\Phi(t,B_\cdot(\omega))$, based solely on the observation of the Brownian sample paths, should be insufficient for Player 2 to achieve a best response. We leave the question open for future research.
\end{remark}
\medskip

In the remainder of the paper we simplify our notation and adopt
\begin{equation}\label{eq:values}
v_1(x,y)=v_1(x,y;\Psi^*)\quad\text{and}\quad v_2(x,y)=v_2(x,y;\Phi^*),
\end{equation}
for the equilibrium payoffs when $0\le x < y$. 
\begin{remark}\label{MultipleNash}
Notice that by continuity of the mappings 
$(x,y)\mapsto (v_1(x,y),v_2(x,y))$, we can actually extend the formulae in \eqref{eq:values} to points on the diagonal $\{(x,y)\in[0,\infty)^2:x=y\}$, and the previous equilibrium analysis remains valid. For $x=y$ by exchanging the roles of the players, we obtain two different equilibria with asymmetric payoffs (i.e., $(\Phi^*,\Psi^*)$ and $(\Psi^*,\Phi^*)$ are two equilibria with payoffs $[v_1(x,x;\Psi^*),v_2(x,x;\Phi^*)]=[v_1(x,x),v_2(x,x)]$ and $[v_1(x,x;\Phi^*),v_2(x,x;\Psi^*)]=[v_2(x,x),v_1(x,x)]$, respectively). In these equilibria, despite the players' initial position being symmetric, one of the two players plays a predatory strategy with payoff $v_2(x,x)$ and has a positive probability to obtain the monopolist payoff $\hat v$, while the other player plays an optimal control for the standard dividend problem with drift $\mu_0$, obtains the smaller payoff $v_1(x,x)=v_0(x)$ and has zero probability to obtain the monopolist payoff.

These equilibria may appear unrealistic as none of the players would agree to be in the dominated position when starting from a symmetric position. Situations of this kind arise in classical ``war of attrition'' models (see \cite{HWW} for a complete study of the deterministic model in continuous-time), where typically there exists also a symmetric equilibrium (i.e., with players using the same strategy) in randomised strategies. Symmetric equilibria of this type are constructed in \cite{Steg} in a continuous-time stochastic model and an extension to a model with incomplete information is given in \cite{PK22}. A complete characterization of the equilibria in randomised strategies in the continuous-time stochastic framework with complete information for one dimensional diffusions is given in \cite{DGM}. 

In the next section we construct a symmetric equilibrium with randomised strategies for our game with symmetric initial endowment.
\end{remark}

\section{Nash equilibrium with symmetric initial endowment.}\label{sec:sym}

When the two players have the same initial endowment, i.e., $x=y$, the game is fully symmetric. As soon as one of the players pays an arbitrarily small amount of dividends the symmetry is broken and the game falls back into the situation analysed in the previous section. From a game-theoretic point of view there is a second mover advantage and it is not clear whether a symmetric equilibrium can be found only using pure strategies. We construct an equilibrium in randomised strategies, where the players use a randomised stopping time to decide the time of their first move. By symmetry, we only need to consider one function that describes the `intensity of stopping' (in equilibrium) for both players. This function will be specified later.

In preparation for the proof of the main result of this section we need to introduce some new objects.
For $\zeta\in D_0^+([0,\infty))$, set 
$T(\zeta)=\inf \{ t \geq 0 : \zeta_t >0\}$.
Then, $T(\zeta)$ is an $(\cF^{\mathbb W}_t)$-optional time because it is an entry time to an open set for a c\`adl\`ag process (recall that $(\cF^{\mathbb W}_t)_{t\ge 0}$ is the raw filtration of the canonical process $\mathbb{W}_t(\varphi,\zeta)=(\varphi(t),\zeta(t))$).

We also extend the definition of $\Phi^*$ to account for an activation at an arbitrary time $s\in[0,\infty)$:
\[ 
\Phi^*_s(t,\varphi):=1_{\{t \geq s\}}\sup_{s\le \rho\le t} \Big(x-a_0+\mu_0 \rho+\sigma \varphi(\rho)\Big)^+,\quad\text{for $\varphi\in C_0([0,\infty))$}.
\]
In order to construct our equilibrium in randomised strategies, we let $\ell: [0,\infty)\to[0,\infty)$ be a measurable function to be determined at equilibrium 
and we define, for $\varphi\in C_0([0,\infty))$
\[
\bar \Gamma^\ell_t(\varphi):=\int_0^t\e^{-\int_0^s\ell \big(x+\mu_0 \rho+\sigma\varphi(\rho)\big)\ud \rho}\ell\big(x+\mu_0 s+\sigma\varphi(s)\big)\ud s.
\]
Then, for $u\in[0,1]$ we also introduce
$\bar \gamma^\ell(\varphi,u):=\inf\{t\ge 0:\bar \Gamma^\ell_t(\varphi)\geq u\}$.
Notice that $\bar\gamma^\ell(\varphi,u)$ is an $(\cF^{\mathbb W}_t)$-stopping time as an entry time of a continuous process to a closed set.
In particular, for each $\omega\in\Omega$ we denote
\begin{equation}\label{eq:Gamma}
\Gamma^\ell_t(\omega):=\bar \Gamma^\ell_t(B(\omega))=\int_0^t\e^{-\int_0^s\ell (X^0_\rho(\omega))\ud \rho}\ell\big(X^0_s(\omega)\big)\ud s=1-\e^{-\int_0^t\ell (X^0_s(\omega))\ud s},
\end{equation}
where $X^0_t=x+\mu_0t+\sigma B_t$. 
Recall that  we work on the enlarged probability space 
\[ 
(\bar \Omega, \bar \cF, \bar \P):=(\Omega\times [0,1]\times [0,1], \cF\otimes \cB([0,1])\otimes \cB([0,1]), \P\otimes \lambda \otimes \lambda),
\]
with canonical element $\bar \omega=(\omega,u_1,u_2)$, where $\cB([0,1])$ denotes the Borel $\sigma$-field and $\lambda$ the Lebesgue measure, and that the two random variables $U_i(\bar \omega)=u_i$ for $i=1,2$  defined on $(\bar \Omega,\bar \cF, \bar \P)$, are uniformly distributed on $[0,1]$, mutually independent and independent of the Brownian motion $B=B(\omega)$. 
We define the randomised stopping times for the raw Brownian filtration
\begin{equation}\label{eq:rst}
\gamma^\ell_i:=\bar \gamma^\ell(B,U_i)=\inf\{t\ge 0: \Gamma^\ell_t \geq U_i\}, \quad\text{for $i=1,2$}.
\end{equation}

In order to find an equilibrium the two players need to find a function $\ell^*$ that generates a pair of optimal randomised stopping times $(\gamma^*_1,\gamma^*_2)$. At equilibrium, on the event $\{\gamma^*_1<\gamma^*_2\}$ Player 1 makes the first move and gives her opponent the second mover advantage. After the first move, the game can be analysed with the arguments from Section \ref{sec:NEasym}. Indeed, we will show that Player 1 is going to adopt the strategy $\Phi^*$ as in Theorem \ref{thm:NE1}, while Player 2 is going to use the strategy $\Psi^*$. On the event $\{\gamma^*_1>\gamma^*_2\}$ the first move is made by Player 2 and the situation is analogous but symmetric.

Let 
$\ell^*(x):=[rv_0(x)-\mu_0]^+/\left(v_2(a_0,x)-v_0(x)\right)$, for $x\ge 0$,
with $v_0$ from \eqref{eq:dnot}. Notice that $\ell^*(x)>0\iff x\in(a_0,\infty)$ by \eqref{eq:Lw} and since $v_2(a_0,x)>v_0(x)$ for $x\in [a_0,\infty)$ by Remark \ref{rem:ineq_v2_v0}. Given $(u,\varphi,\zeta)\in [0,1]\times C_0([0,\infty))\times D_0^+([0,\infty))$, set $\bar \gamma^*(\varphi,u):=\bar \gamma^{\ell^*}(\varphi,u)$ and simplify the notation to $\bar \gamma^*=\bar \gamma^*(\varphi,u)$, $T=T(\zeta)$.
An important role will be played by the randomised strategy $\Xi^*$ is defined as
\begin{equation}\label{eq:xistar}
\begin{aligned}
\Xi^*(u, t,\varphi,\zeta):=1_{\{t \ge \bar \gamma^*\wedge T\}} \Big[ 1_{\{T \geq \bar\gamma^*\}}\Phi_{\bar \gamma^*}^*(t,\varphi)+1_{\{T < \bar\gamma^*\}}\Psi^*(t,\varphi,\zeta)\Big],
\end{aligned}
\end{equation}
where $\Psi^*$ is defined as in \eqref{eq:maps} with the initial condition $(x,x)$.
The fact that $\Xi^*$ satisfies the non-anticipative property in Definition \ref{def:random_strategies} is not straightforward, because $T$ is only an $(\cF^{\mathbb W}_t)$-optional time of the canonical filtration. Checking non-anticipativity of the map will be part of the proof of Theorem \ref{thm:NE2}.

The next theorem provides a symmetric equilibrium in randomised strategies in the symmetric set-up and it is the main result of this section. 
\begin{theorem}[NE with symmetric endowment]\label{thm:NE2}
Set $\gamma^*_i:=\gamma^{\ell^*}_i$, $i=1,2$, as in \eqref{eq:rst}.
The pair $(\Xi^*_1,\Xi^*_2)$, with $\Xi^*_i=\Xi^*$, $i=1,2$, is a Nash equilibrium in randomised strategies 
and the equilibrium payoffs for the two players read
$\cJ^1_{x,x}( \Xi^*_1, \Xi^*_2)=\cJ^2_{x,x}( \Xi^*_2, \Xi^*_1)=v_0(x)$, $x\in[0,\infty)$,
with $v_0$ as in \eqref{eq:dnot}.
\end{theorem}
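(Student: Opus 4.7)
\textbf{Proof plan for Theorem \ref{thm:NE2}.} My plan is to proceed in three steps: (1) check admissibility of $\Xi^*$ and identify the unique control pair induced by $(\Xi^*,\Xi^*)$; (2) compute the payoff $\cJ^1(\Xi^*,\Xi^*)=\cJ^2(\Xi^*,\Xi^*)=v_0(x)$ via It\^o; (3) show no profitable unilateral deviation using Lemma \ref{lem:deviations}-(ii) and a second It\^o argument.

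\textbf{Step 1 (Admissibility and control-inducing property).} First I would verify that $\Xi^*\in\Sigma_R(x)$. Non-anticipativity is inherited from $\bar\gamma^*(\varphi,u)$ being an $(\cF^\varphi_t)$-stopping time and $T(\zeta)$ being an $(\cF^\zeta_t)$-optional time; both $\{\bar\gamma^*\le t\}$ and $\{T\le t\}$ are $\cF^{\mathbb W}_t$-measurable, allowing us to resolve the indicator $1_{\{T\ge\bar\gamma^*\}}$ on the activation event. The c\`adl\`ag/monotonicity and jump bounds in Definition \ref{def:random_strategies} follow from the corresponding properties of $\Phi^*_{\bar\gamma^*}$ and $\Psi^*$. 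Next I would show $(\Xi^*,\Xi^*)\in\Theta_R(x,x)$ by exhibiting the unique induced controls: on $\{\gamma^*_1<\gamma^*_2\}$ (with $\gamma^*_i=\bar\gamma^*(B,U_i)$ as in \eqref{eq:rst}) set $L=\Phi^*_{\gamma^*_1}(\cdot,B)$ and $D=\Psi^*(\cdot,B,L)$ started with initial condition $(X^0_{\gamma^*_1},X^0_{\gamma^*_1})$ (cf.\ Remark \ref{rem:phipsi*}); symmetrically on $\{\gamma^*_2<\gamma^*_1\}$. Uniqueness of the fixed point is the delicate point: a ``self-fulfilling'' simultaneous jump $\Delta>0$ by both players at some $\tau<\gamma^*_1\wedge\gamma^*_2$ would, by symmetry, force $\Delta=(\Delta-b(X^0_\tau-\Delta))^+$, hence $b(X^0_\tau-\Delta)=0$, contradicting $b\ge\alpha>0$ (Proposition \ref{prop:SC}-(i)).

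\textbf{Step 2 (Equilibrium payoffs).} On $\{\gamma^*_1<\gamma^*_2<\gamma_{X^0}\}$, Step 1 shows that from time $\gamma^*_1$ we recover the asymmetric equilibrium of Theorem \ref{thm:NE1}, so Player 1's continuation is $e^{-r\gamma^*_1}v_0(X^0_{\gamma^*_1})$; on $\{\gamma^*_2<\gamma^*_1<\gamma_{X^0}\}$ Player 1 is the richer firm and receives $e^{-r\gamma^*_2}v_2(a_0,X^0_{\gamma^*_2})$ (values extended continuously to the diagonal as in Remark \ref{MultipleNash}). Conditional on $B$, $U_1,U_2$ are i.i.d.\ uniform, so the joint density of $\{\gamma^*_1\in \mathrm{d}t,\gamma^*_1<\gamma^*_2\}$ given $B$ is $\ell^*(X^0_t)e^{-2\int_0^t\ell^*(X^0_s)\,\mathrm{d}s}\mathrm{d}t$. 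Using $\ell^*(x)(v_2(a_0,x)-v_0(x))=[rv_0(x)-\mu_0]^+$ one obtains
\begin{align*}
\cJ^1(\Xi^*,\Xi^*)=\E_{x,x}\!\!\int_0^{\gamma_{X^0}}\!\!e^{-rt-2\int_0^t\ell^*(X^0_s)\mathrm{d}s}\Bigl\{2\ell^*(X^0_t)v_0(X^0_t)+[rv_0(X^0_t)-\mu_0]^+\Bigr\}\mathrm{d}t.
\end{align*}
Applying It\^o to $N_t:=e^{-rt-2\int_0^t\ell^*(X^0_s)\mathrm{d}s}v_0(X^0_t)$ on $[0,\gamma_{X^0}]$ and invoking \eqref{eq:Lw} gives $\mathrm{d}N_t=-e^{-rt-2\int_0^t\ell^*\mathrm{d}\rho}\{2\ell^*(X^0_t)v_0(X^0_t)+[rv_0(X^0_t)-\mu_0]^+\}\,\mathrm{d}t+\mathrm{d}M_t$ for a local martingale $M$. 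A standard localisation argument together with $N_{\gamma_{X^0}}=0$ (since $v_0(0)=0$) yields $\cJ^1(\Xi^*,\Xi^*)=v_0(x)$; by symmetry the same holds for Player 2.

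\textbf{Step 3 (Optimality).} By Lemma \ref{lem:deviations}-(ii), I only need to bound $\cJ^1(\Psi_1,\Xi^*)$ for arbitrary $\Psi_1\in\Sigma(x)$. Since $\Xi^*$ produces no mass before activation, the first payment time of Player 1 agrees on $\{\tilde T_1<\bar\gamma^*_2\}$ with the Brownian stopping time $\tilde T_1:=\inf\{t:\Psi_1(t,B,0)>0\}$. On this event, from $\tilde T_1$ Player 2 switches to $\Psi^*$, and Lemma \ref{lem:Phi} (extended to the diagonal by continuity) bounds Player 1's continuation by $v_0(X^0_{\tilde T_1})$. On $\{\tilde T_1\ge\bar\gamma^*_2,\bar\gamma^*_2<\gamma_{X^0}\}$, Player 2 plays $\Phi^*_{\bar\gamma^*_2}$ and, by the symmetric analogue of Proposition \ref{prop:D*}, Player 1's continuation is at most $v_2(a_0,X^0_{\bar\gamma^*_2})$. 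Integrating out $U_2$ given $B$ yields
\begin{align*}
\cJ^1(\Psi_1,\Xi^*)\le\E_{x,x}\bigl[\widetilde M_{\tilde T_1\wedge\gamma_{X^0}}\bigr],\quad
\widetilde M_t:=e^{-rt-\int_0^t\ell^*\mathrm{d}\rho}v_0(X^0_t)+\int_0^t e^{-rs-\int_0^s\ell^*\mathrm{d}\rho}\ell^*(X^0_s)v_2(a_0,X^0_s)\mathrm{d}s.
\end{align*}
A further It\^o calculation shows that the drift of $\widetilde M$ equals $e^{-rt-\int_0^t\ell^*\mathrm{d}\rho}\bigl\{-[rv_0-\mu_0]^++\ell^*(v_2(a_0,\cdot)-v_0)\bigr\}=0$ by the key identity, so $\widetilde M$ is a local martingale; optional stopping gives $\E[\widetilde M_{\tilde T_1\wedge\gamma_{X^0}}]=\widetilde M_0=v_0(x)$. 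Combined with Step 2 and the symmetric bound for Player 2, Lemma \ref{lem:deviations}-(ii) concludes that $(\Xi^*,\Xi^*)$ is a Nash equilibrium with payoffs $v_0(x)$. I expect the main obstacle to lie in Step 1, specifically in rigorously ruling out alternative fixed points, and in Step 3 in handling the continuous extension of Lemma \ref{lem:Phi} and Proposition \ref{prop:D*} to the diagonal initial condition.
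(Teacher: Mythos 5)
Your overall architecture mirrors the paper's five-step proof: admissibility of $\Xi^*$ and identification of the induced control pair, computation of the equilibrium payoff, and bounding unilateral deviations through Lemma \ref{lem:deviations}-(ii) combined with It\^o and the indifference identity $\ell^*(x)(v_2(a_0,x)-v_0(x))=[rv_0(x)-\mu_0]^+$. Your Step 2 computation via the joint conditional density of $(\gamma^*_1,\gamma^*_2)$ and It\^o on $N_t=e^{-rt-2\int_0^t\ell^*}v_0(X^0_t)$ is a correct and slightly more direct variant of the paper's route through the auxiliary optimal-stopping value $V(x)$; both rely on exactly the same cancellation. Your $\widetilde M$ is, after unwinding $\Gamma^*$, precisely the paper's integrand in $V(x)$.

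There is, however, a genuine gap in Step 1. You assert that non-anticipativity of $\Xi^*$ follows because ``both $\{\bar\gamma^*\le t\}$ and $\{T\le t\}$ are $\cF^{\mathbb W}_t$-measurable''. This is false for $T(\zeta)=\inf\{t:\zeta_t>0\}$: since $(\cF^{\mathbb W}_t)$ is the \emph{raw} (non-right-continuous) filtration, $T$ is only an \emph{optional} time, so $\{T<t\}\in\cF^{\mathbb W}_t$ but in general $\{T\le t\}\notin\cF^{\mathbb W}_t$. The paper explicitly flags this and resolves it by observing that $\Psi^*(t,\varphi,\zeta)$ can only be nonzero after $\zeta$ reaches $\alpha>0$, which allows one to replace $T$ by $\tau_\alpha(\zeta)=\inf\{t:\zeta_t\ge\alpha\}$ --- a genuine $(\cF^{\mathbb W}_t)$-stopping time, being the entry time of a c\`adl\`ag process into a closed set --- and to rewrite $\Xi^*(u,t,\varphi,\zeta)=1_{\{t\ge\bar\gamma^*\wedge\tau_\alpha\}}[1_{\{\zeta_{\bar\gamma^*-}=0\}}\Phi^*_{\bar\gamma^*}+1_{\{\zeta_{\bar\gamma^*-}>0\}}\Psi^*]$. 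Without this substitution your verification of Definition \ref{def:random_strategies}(i) does not close.

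Two further points that you rightly flag as delicate but argue only loosely. (a) Your fixed-point uniqueness argument rules out a \emph{symmetric simultaneous jump} $\Delta$ at $\tau<\gamma^*_1\wedge\gamma^*_2$, but not asymmetric jumps or continuous increases; the paper's argument shows directly that any solution pair satisfies $\bar L^*_t=\bar D^*_t=0$ for $t<\gamma^*_1\wedge\gamma^*_2$ by deriving the mutually contradictory bounds $X^{\bar L^*}_t\ge Y^{\bar D^*}_t+\alpha$ and $Y^{\bar D^*}_t\ge X^{\bar L^*}_t+\alpha$. (b) Your continuation bounds invoke ``Lemma \ref{lem:Phi} extended to the diagonal by continuity'' and a ``symmetric analogue of Proposition \ref{prop:D*}''. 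Lemma \ref{lem:Phi} as stated needs $y>x$ (its proof uses $Y^*_t>X^L_t$ strictly); the paper avoids this entirely in the proofs of \eqref{eq:claim1}--\eqref{eq:claim2} by running a self-contained It\^o argument on $v_0$ and $u_2$ directly, which handles the boundary case $\gamma_Y=T(D)$ and the jump at $T(D)$ cleanly. You should also make explicit the identity $\{T(D)\ge\gamma^*_1\}=\{T(\tilde D)\ge\gamma^*_1\}$ and $\{T(D)<\gamma^*_1\}\subseteq\{T(D)=T(\tilde D)\}$ (the paper's \eqref{eq:sigma_tilde_D}), which justifies replacing the $U_1$-dependent time $T(D)$ by the $(\cF_{t+})$-stopping time $T(\tilde D)$ inside the supremum; you gesture at this but do not supply the fixed-point argument that the pair $(0,\tilde D)$ must coincide with $(\bar L^*,D)$ on $[0,t]$ for $t\in(T(D),T(\tilde D)\wedge\gamma^*_1)$.
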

A few remarks are in order before we proceed with the proof of the theorem. Due to the symmetry of the set-up, all the considerations that we make for one player's strategy also hold for the other player's strategy.
\medskip

\begin{remark}[An intuitive interpretation]
Once one of the players ---say Player $1$--- activates her control (i.e., she pays dividends), we reach a position $X^L<Y^D$. According to \eqref{eq:xistar}, at that point both players play 
the asymmetric equilibrium obtained in Theorem \ref{thm:NE1}, with Player $1$ in a dominated position. Thus, a player's activation of her control can be thought of as ``conceding'', in the sense that the first mover accepts to be in a dominated position. 
In principle, neither player wants to be the first to activate her control. However, waiting is costly for both players because of discounting and the risk of default. Then, there is a trade-off ---say for Player $2$--- between the potential gains at the time Player $1$ activates her control (i.e., $v_2(X,Y)-v_0(Y)>0$) and the effect of discounting and default risk. The strategy $\Xi^*$ for Player 2 can be described as follows: Player $2$ will wait for a random amount of time $\gamma_2^*$ and one of two mutually exclusive outcomes is possible: 
\begin{itemize}
\item[(a)] Player 1 activates her control strictly before $\gamma^*_2$ and concedes (that corresponds to the event $\{T<\bar\gamma^*\}$ in \eqref{eq:xistar}); Then the players start playing the continuation equilibrium from Theorem \ref{thm:NE1} in which Player $1$ is in a dominated position; 
\item[(b)] Player 1 does not activate her control before $\gamma_2^*$ (that corresponds to the event $\{T\ge\bar\gamma^*\}$ in \eqref{eq:xistar}) and Player 2 concedes; Then at $\gamma_2^*$ players play the continuation equilibrium obtained from Theorem \ref{thm:NE1} by exchanging the roles of the players, in which Player $2$ is in a dominated position. 
\end{itemize}

The function $\ell^*$ is constructed in order to make each player {\em indifferent} between conceding and waiting at any moment of time when $X^0_t> a_0$ (the so-called {\em indifference principle}). That guarantees that waiting a random time $\gamma^*$ before conceding is a best reply. As this is true for both players, it is then a Nash equilibrium. In particular, we notice that $\gamma^*_1$ is the first jump time of a Poisson process with a stochastic intensity $\ell^*(X^0_t)$, which is positive if and only if $X^0_t>a_0$ (analogously for $\gamma^*_2$).
Note that our construction shares some important features with symmetric equilibria in stopping games with a second mover advantage constructed in \cite{Steg}.
\end{remark}

\begin{remark}[Observable quantities for the two players]
The random time $\gamma^*_2$ depends on the realization of a private randomisation device $U_2$ used by Player $2$. As such, it is not directly observable by Player 1. Then, the definition of Player 1's strategy cannot depend {\em explicitly} on $\gamma^*_2$ and/or on $U_2$. Instead, Player 1's strategy is allowed to depend on $\gamma^*_1$ and on the trajectories of $B$ and of Player $2$'s control $D$, which are {\em observable quantities} for Player 1. 

The realized controls $L$ and $D$ induced by a pair of randomised strategies $(\Xi_1,\Xi_2)$ depend in general on $(U_1,U_2)$ (see Definition \ref{def:theta_r}). Indeed, the control chosen by Player $2$ may depend on $U_2$, as her strategy depends on $U_2$, whereas Player 1's strategy depends {\em indirectly} on $U_2$ through $D$. The same is true by swapping the roles of the two players. 
\end{remark}
\medskip

\begin{proof}[Proof of Theorem \ref{thm:NE2}.]
The proof is divided into five main steps. In the first step, we show that $\Xi^*$ satisfies the conditions of Definition \ref{def:random_strategies}. In the second step we show that $(\Xi^*_1,\Xi^*_2)\in\Theta_R(x,x)$ and, in particular, that there exists a unique pair of randomised controls $(\bar L^*, \bar D^*)$ defined on $\bar \Omega$ such that 
\begin{equation}\label{eq:wellposed}
\bar L^*_t =\Xi^*(U_1,t,B,\bar D^*)\quad\text{and}\quad\bar D^*_t =\Xi^*(U_2,t,B,\bar L^*).
\end{equation}
In the third step we calculate the players' payoffs associated to the pair $(\bar L^*,\bar D^*)$. Then we show that the  pair $(\Xi^*_1,\Xi^*_2)$ is a Nash equilibrium in two subsequent steps. 
\medskip

\noindent{\bf Step 1}. Notice first that $(u,t,\varphi,\zeta)\mapsto\Xi^*(u,t,\varphi,\zeta)$ is jointly measurable. For every $u\in [0,1]$, the trajectory 
\[ 
t \rightarrow \Xi^*(u,t,\varphi,\zeta)=1_{\{t \ge \bar \gamma^*\wedge T\}} \Big[ 1_{\{T \geq \bar\gamma^*\}}\Phi_{\bar \gamma^*}^*(t,\varphi)+1_{\{T < \bar\gamma^*\}}\Psi^*(t,\varphi,\zeta)\Big],
\]
is non-decreasing, right-continuous and it satisfies the admissibility condition in Definition \ref{def:random_strategies}(iii) thanks to analogous properties of $\Phi^*$ and $\Psi^*$.
It only remains to check that $t \rightarrow \Xi^*(u,t,\varphi,\zeta)$ is non-anticipative.
Using that $b(x)\geq \alpha>0$ for $x\in [0,\infty)$, we deduce that for all $(t,\varphi,\zeta)\in [0,\infty)\times C_0([0,\infty))\times D_0^+([0,\infty))$
\begin{equation*}
\Psi^*(t,\varphi,\zeta)=\sup_{0\le s\le t}\Big(\zeta(s)-b\big(x+\mu_0 s +\sigma \varphi(s)-\zeta(s)\big)\Big)^+=1_{\{t \geq \tau_\alpha(\zeta)\}}\Psi^*(t,\varphi,\zeta),
\end{equation*} 
where we set 
$\tau_\alpha(\zeta)=\inf \{ t \geq 0 : \zeta_t \geq \alpha \}$ for $\zeta\in D_0^+([0,\infty))$.
It follows that, denoting $\tau_\alpha=\tau_\alpha(\zeta)$ 
\begin{equation*}
\begin{aligned}
\Xi^* (u,t,\varphi,\zeta) 
&=1_{\{t \ge \bar \gamma^*\wedge T\}} \Big[ 1_{\{T \geq \bar\gamma^*\}}\Phi_{\bar \gamma^*}^*(t,\varphi)+1_{\{T < \bar\gamma^*\}}1_{\{t \geq \tau_{\alpha}\}}\Psi^*(t,\varphi,\zeta)\Big] \\
&=1_{\{t \ge \bar \gamma^*\wedge T\}} \Big[ 1_{\{\zeta_{\bar \gamma^*-}=0\}}\Phi_{\bar \gamma^*}^*(t,\varphi)+1_{ \{\zeta_{\bar \gamma^*-}>0\}}1_{\{t \geq \tau_{\alpha}\}}\Psi^*(t,\varphi,\zeta)\Big] \\
&=1_{\{t \ge \bar \gamma^*\wedge \tau_\alpha\}} \Big[ 1_{\{\zeta_{\bar \gamma^*-}=0\}}\Phi_{\bar \gamma^*}^*(t,\varphi)+ 1_{ \{\zeta_{\bar \gamma^*-}>0\}}\Psi^*(t,\varphi,\zeta)\Big],
\end{aligned}
\end{equation*}
where we used that $\tau_\alpha \geq T$, and that $\{\zeta_{\bar \gamma^*-}=0\}=\{ T \geq \bar \gamma^*\}$. 
The final expression guarantees the non-anticipativity property because $\bar \gamma^*(u,\cdot)$ and $\tau_\alpha$ are $(\cF^{\mathbb W}_t)$-stopping times. 
\medskip

\noindent{\bf Step 2}. Here we show that there is a unique solution of \eqref{eq:wellposed}. 
Fix a treble $(\omega,u_1,u_2)\in \bar \Omega=\Omega\times[0,1]^2$ so that $(U_1,U_2)=(u_1,u_2)$ and the trajectory of the Brownian motion $t\mapsto B_t(\omega)$ is fixed (so is the trajectory $t\mapsto X^0_t(\omega)$). Then, the random times $\gamma^*_i$, $i=1,2$, from \eqref{eq:rst} are uniquely determined. Here we should use the notation $\gamma^*_i(u_i,\omega)$, for $i=1,2$, $\bar L^*(\omega,u_1,u_2)$ and $\bar D^*(\omega,u_1,u_2)$ to stress the dependence of these quantities on $(\omega,u_1,u_2)$. This is rather cumbersome, so we drop the explicit dependence on the treble $(\omega,u_1,u_2)$ but we emphasise that the rest of the construction in this step is performed {\em pathwise}. 

First we show that \eqref{eq:wellposed} admits at most one solution. Let us assume that $(\bar L^*,\bar D^*)$ is a solution pair for \eqref{eq:wellposed}. Then, we show that $t<\gamma^*_1\wedge \gamma^*_2 \implies \bar L^*_t=\bar D^*_t=0$ (actually $\bar L^*_s=\bar D^*_s=0$ for $s\in[0,t]$, by monotonicity). Arguing by contradiction, assume $\bar L^*_t>0$ and $t<\gamma^*_1\wedge \gamma^*_2$. Then from the definition of $\bar L^*$ and \eqref{eq:wellposed}, it must be $T(\bar D^*) \leq t < \gamma^*_1$ and $\bar L^*_t=\Psi^*(t,B,\bar D^*)$. Moreover, using the definition of $\Psi^*$ and recalling that $b(x)\geq \alpha$ for $x\in[0,\infty)$, yields $\bar L^*_t=\Psi^*(t,B,\bar D^*)>0 \implies \bar D^*_t > \alpha$. Then,
\begin{equation}\label{eq:Xa}
X^{\bar L^*}_t=X^0_t-\Psi^*(t,B,\bar D^*)\ge X^0_t-(\bar D^*_t-\alpha)^+=X^0_t-\bar D^*_t+\alpha=Y^{\bar D^*}_t+\alpha. 
\end{equation}
However, since $t<\gamma^*_1\wedge\gamma^*_2$ and $\bar L^*_t>0$ imply $\bar D^*_t > \alpha$, then we should also have $t<\gamma^*_1\wedge\gamma^*_2$ and $\bar D^*_t>0$. Therefore, we can repeat the argument above swapping the roles of the two players, and obtain
$Y^{\bar D^*}_t\ge X^{\bar L^*}_t+\alpha$. 
Combining the latter and \eqref{eq:Xa} leads to a contradiction and it must be $t<\gamma^*_1\wedge \gamma^*_2 \implies \bar L^*_t=\bar D^*_t=0$, as claimed. It follows that $T(\bar D^*) \geq \gamma^*_1\wedge \gamma^*_2$ and $T(\bar L^*) \geq \gamma^*_1\wedge \gamma^*_2$. 

The pair $(\gamma^*_1,\gamma^*_2)$ is exogenously determined by the realisation of the pair $(U_1,U_2)$ and the trajectory of $B$. Moreover, the definition of $\ell^*$ and
\eqref{eq:wnondecr} imply $\ell^*(x)>0 \iff x\in(a_0,\infty)$. Therefore 
\begin{equation}\label{eq:meas}
\begin{aligned}
\text{$t\mapsto \Gamma^{\ell^*}_t(\omega)$ defines a measure on $[0,\infty)$ supported by the set
$\{t\ge 0:X^0_t(\omega)\in [a_0,\infty)\}$.}
\end{aligned}
\end{equation}
That implies that $\gamma^*_1$ and $\gamma^*_2$ can only occur during excursions of the process $X^0$ into the half-line $[a_0,\infty)$, and thus that $X^0_{\gamma^*_i}\in [a_0,\infty)$ for all $(u_i,\omega)$, $i=1,2$. First, we notice that $\gamma^*_1\leq \gamma^*_2$ implies $T(\bar D^*) \geq \gamma^*_1$ and therefore $\bar L^*_t=\Phi^*_{\gamma^*_1}(t,B)$. Second, if $\gamma^*_1<\gamma^*_2$, it must be $u_2>u_1$ and  $\sup_{t\in[\gamma^*_1,\gamma^*_2]}X^0_t >a_0$. This implies $T(\bar L^*)=T(\Phi^*_{\gamma^*_1}(\cdot,B))<\gamma^*_2$, and thus $\bar D^*_t=\Psi^*(t,B,\bar L^*)1_{\{t\ge T(\bar L^*)\}}$.
Similarly, $\gamma^*_2\leq \gamma^*_1\implies \bar D^*_t=\Phi^*_{\gamma^*_2}(t,B)$ and  $\gamma^*_2 < \gamma^*_1$ implies that $T(\bar D^*)=T(\Phi^*_{\gamma^*_2}(\cdot,B))<\gamma^*_1$ and that  $\bar L^*_t=\Psi^*(t,B,\bar D^*)1_{\{t\ge T(\bar D^*)\}}$.

Then, if a solution of \eqref{eq:wellposed} exists, it is uniquely determined by the properties above and 
\begin{equation}\label{eq:bLbD_pathwise}
\begin{aligned}
(\bar L^*_t,\bar D^*_t)=\left\{
\begin{array}{ll}
\Big(\Phi^*_{\gamma^*_1}(t,B),\Psi^*\big(t,B,\Phi^*_{\gamma^*_1}(\cdot,B)\big)1_{\{t \geq T(\Phi^*_{\gamma^*_1}(\cdot,B))\}} \Big)& \text{on $\{\gamma^*_1<\gamma^*_2\}$},\\[+4pt]
\Big(\Psi^*\big(t,B,\Phi^*_{\gamma^*_2}(\cdot,B)\big)1_{\{t \geq T(\Phi^*_{\gamma^*_2}(\cdot,B))\}},\Phi^*_{\gamma^*_2}(t,B) \Big)& \text{on $\{\gamma^*_2<\gamma^*_1\}$}, \\
\Big(\Phi^*_{\gamma^*_1}(t,B),\Phi^*_{\gamma^*_2}(t,B) \Big)& \text{on $\{\gamma^*_2=\gamma^*_1\}$}.
\end{array}
\right.
\end{aligned}
\end{equation}
We check easily that the pair $(\bar L^*,\bar D^*)$ as defined in \eqref{eq:bLbD_pathwise} is a solution to \eqref{eq:wellposed} and that its restriction on $[0,t]$ is the unique solution of \eqref{eq:wellposed} on $[0,t]$, which proves that $(\Xi^*_1,\Xi^*_2) \in \Theta_R(x,x)$.

The characterisation of the pair $(\Xi^*_1,\Xi^*_2)$ is further simplified if we remove  suitable $\bar \P$-null sets.
Indeed, $\bar \P$-almost surely $\Phi^*_{\gamma^*_i}(t,B)>0$ for all $t \geq \gamma^*_i$, $i=1,2$. Then, $\gamma^*_1<\gamma^*_2$ implies $T(\bar L^*)=\gamma^*_1$, $\bar\P$-a.s. Vice-versa, $\gamma^*_1>\gamma^*_2$ implies $T(\bar D^*)=\gamma^*_2$, $\bar \P$-a.s.
Then, the solution of \eqref{eq:wellposed} satisfies
\begin{equation}\label{eq:bLbD}
\begin{aligned}
(\bar L^*_t,\bar D^*_t)=\left\{
\begin{array}{ll}
1_{\{t \geq \gamma^*_1\}}\Big(\Phi^*_{\gamma^*_1}(t,B),\Psi^*\big(t,B,\Phi^*_{\gamma^*_1}(\cdot,B)\big) \Big)& \text{on $\{\gamma^*_1<\gamma^*_2\}$},\\[+4pt]
1_{\{t \geq \gamma^*_2\}}\Big(\Psi^*\big(t,B,\Phi^*_{\gamma^*_2}(\cdot,B)\big),\Phi^*_{\gamma^*_2}(t,B) \Big)& \text{on $\{\gamma^*_2<\gamma^*_1\}$}.
\end{array}
\right.
\end{aligned}
\end{equation}
Since $\gamma^*_1(u_1,\omega)\neq\gamma^*_2(u_2,\omega)\iff u_1\neq u_2$ and $(\lambda\otimes\lambda)(U_1=U_2)=0$, then \eqref{eq:bLbD} characterises the pair $(\bar L^*,\bar D^*)$ up to a $\bar \P$-negligible set.
\medskip

\noindent{\bf Step 3}.
Here we calculate the players' payoffs under the strategy pair $(\Xi^*_1,\Xi^*_2)$. The induced pair of randomised controls is $(\bar L^*,\bar D^*)$ constructed in step 2. We denote $X^*=X^{\bar L^*}$, $Y^*=Y^{\bar D^*}$ with the associated default times $\gamma_{X^*}$ and $\gamma_{Y^*}$. We also denote $\gamma_0=\inf\{t\ge 0 : X^0_t=0\}$. In order to keep track of randomisation, for any realisation $(U_1,U_2)=(u_1,u_2)$ we use the notation 
$(X^*,Y^*)=(X^{*;u_1u_2},Y^{*;u_1u_2})$ and $(\bar D^*,\bar L^*)=(\bar D^{*;u_1,u_2},\bar L^{*;u_1,u_2})$.
These maps are measurable in $(u_1,u_2)$ by construction.   

Player 2's payoff reads 	
\begin{equation}\label{eq:Jrand}
\cJ^2_{x,x}(\bar D^*,\bar L^*)=\int_0^1\int_0^1\cJ^2_{x,x}(\bar D^{*;u_1,u_2},\bar L^{*;u_1,u_2})\ud u_1\ud u_2.
\end{equation}
For a fixed pair $(u_1,u_2)$ we have
\begin{equation}\label{eq:Juu}
\begin{aligned}
&\cJ^2_{x,x}(\bar D^{*;u_1,u_2},\bar L^{*;u_1,u_2})\\
&=\E_{x,x}\Big[\int_{[0,\gamma_{X^{*;u_1,u_2}}\wedge\gamma_{Y^{*;u_1,u_2}}]}\e^{-r t}\ud \bar D^{*;u_1,u_2}_t+1_{\{\gamma_{X^{*;u_1,u_2}}<\gamma_{Y^{*;u_1,u_2}}\}}\e^{-r\gamma_{X^{*;u_1,u_2}}}\hat v(Y^{*;u_1,u_2}_{\gamma_{X^{*;u_1,u_2}}})\Big].
\end{aligned}
\end{equation}
The expression under expectation is zero on the event $\{\gamma^*_1(u_1)\wedge\gamma^*_2(u_2)\ge\gamma_0\}$, because default for both firms occurs before they actually start paying any dividends. 

On the complementary event, we consider separately the cases $\gamma^*_1(u_1)<\gamma^*_2(u_2)$ and $\gamma^*_1(u_1)>\gamma^*_2(u_2)$. We recall the notation from Remark \ref{rem:phipsi*}, i.e., $\Phi^*(x,t,\varphi)$ and $\Psi^*(x,y,t,\varphi,\zeta)$, in order to keep track of the position of the process $X^0$ at the time when the strategies of the two players are activated. We also introduce a shift for the trajectories in the canonical space, defined as $\theta_t (\varphi(\cdot))=\varphi(t+\cdot)-\varphi(t)$ for $\varphi\in C_0([0,\infty))$.  Finally, from the definition of $\Phi^*_s$ and $\Psi^*$ it is not hard to see that for $t\ge\gamma^*_i(u_i)$, $i=1,2$,
\begin{equation*}
\begin{aligned}
\Phi^*_{\gamma^*_i(u_i)}(t,B)&=\Phi^*\big(X^0_{\gamma^*_i(u_i)},t-\gamma^*_i(u_i), \theta_{\gamma^*_i(u_i)}(B_{\cdot})\big)=:\tilde\Phi^*\big(t,\theta_{\gamma^*_i(u_i)}(B_{\cdot})\big)\\
\Psi^*\big(t,B,\Phi^*_{\gamma^*_i(u_i)}\big)&=\Psi^*\Big(X^0_{\gamma^*_i(u_i)},X^0_{\gamma^*_i(u_i)},t-\gamma^*_1(u_i),\theta_{\gamma^*_i(u_i)}(B_\cdot),\tilde\Phi^*\big(t,\theta_{\gamma^*_i(u_i)}(B_{\cdot})\big)\Big)\\
&\eqqcolon\tilde\Psi^*\Big(t,\theta_{\gamma^*_i(u_i)}(B_{\cdot}),\tilde\Phi^*\Big).
\end{aligned}
\end{equation*}
Notice that, for example, on the event $\{\gamma^*_1(u_1)<\gamma^*_2(u_2)\}$ we have $\bar L^*_t=\tilde\Phi^*(t,\theta_{\gamma^*_1(u_1)}(B_\cdot))$ for $t\ge \gamma^*_1(u_1)$, which is the optimal control in the classical dividend problem starting at $X^0_{\gamma^*_1(u_1)}$. Then $\bar D^*_t=\tilde \Psi^*(t,\theta_{\gamma^*_1(u_1)}(B_\cdot),\tilde\Phi^*)$ for $t\ge \gamma^*_1(u_1)$ is Player 2's response in the game starting at $X^0_{\gamma^*_1(u_1)}$, when Player 1's concedes. From this dicusssion it follows that 
$\{\gamma^*_1(u_1)<\gamma^*_2(u_2)\}\subset\{\gamma_{X^{*;u_1,u_2}}<\gamma_{Y^{*;u_1,u_2}}\}$.
A symmetric situation occurs on the event $\{\gamma^*_1(u_1)>\gamma^*_2(u_2)\}$.

Continuing from \eqref{eq:Juu}, on the event $\{\gamma^*_1(u_1)<\gamma^*_2(u_2)\}$ we have
\begin{equation*}
\begin{aligned}
&\E_{x,x}\Big[1_{\{\gamma^*_1(u_1)<\gamma_0\}}1_{\{\gamma^*_1(u_1)<\gamma^*_2(u_2)\}}\Big(\int_{[\gamma^*_1(u_1),\gamma_{X^{*;u_1,u_2}}]}\e^{-r t}\ud \bar D^{*;u_1,u_2}_t+\e^{-r\gamma_{X^{*;u_1,u_2}}}\hat v(Y^{*;u_1,u_2}_{\gamma_{X^{*;u_1,u_2}}})\Big)\Big]\\
&=\E_{x,x}\Big[1_{\{\gamma^*_1(u_1)<\gamma_0\}}1_{\{\gamma^*_1(u_1)<\gamma^*_2(u_2)\}}\E_{x,x}\Big[\int_{[\gamma^*_1(u_1),\gamma_{X^{*;u_1,u_2}}]}\e^{-r t}\ud \bar D^{*;u_1,u_2}_t\\
&\qquad\qquad+\e^{-r\gamma_{X^{*;u_1,u_2}}}\hat v(Y^{*;u_1,u_2}_{\gamma_{X^{*;u_1,u_2}}})\Big|\cF_{\gamma^*_1(u_1)}\Big]\Big].
\end{aligned}
\end{equation*}
Using the strong Markov property, on the event ${\{\gamma^*_1(u_1)<\gamma_0\}}\cap{\{\gamma^*_1(u_1)<\gamma^*_2(u_2)\}}$ we can write 
\begin{equation*}
\E_{x,x}\Big[\int_{[\gamma^*_1(u_1),\gamma_{X^{*;u_1,u_2}}]}\e^{-r t}\ud \bar D^{*;u_1,u_2}_t\!+\!\e^{-r\gamma_{X^{*;u_1,u_2}}}\hat v(Y^{*;u_1,u_2}_{\gamma_{X^{*;u_1,u_2}}})\Big|\cF_{\gamma^*_1(u_1)}\Big]=\cJ^2_{X^0_{\gamma^*_1(u_1)},X^0_{\gamma^*_1(u_1)}}(\tilde \Psi^*,\tilde \Phi^*),
\end{equation*}
where the final expression is Player 2's expected payoff when the game starts from $(X^0_{\gamma^*_1(u_1)},X^0_{\gamma^*_1(u_1)})$, Player 1 uses $\tilde\Phi^*$ and Player 2 uses $\tilde \Psi^*$.  
From the analysis in Section \ref{sec:NEasym} we know that 
\[
\cJ^2_{X^0_{\gamma^*_1(u_1)},X^0_{\gamma^*_1(u_1)}}(\tilde \Psi^*,\tilde \Phi^*)=v_2\big(X^0_{\gamma^*_1(u_1)},X^0_{\gamma^*_1(u_1)}\big),
\]
with $v_2$ as in \eqref{eq:values}. Since $X^0_{\gamma^*_1(u_1)}\in(a_0,\infty)$ by \eqref{eq:meas} and $v_2(x,y)=v_2(a_0,y)$ for $x\ge a_0$ by construction, then
\[
\cJ^2_{X^0_{\gamma^*_1(u_1)},X^0_{\gamma^*_1(u_1)}}(\tilde \Psi^*,\tilde \Phi^*)=v_2\big(a_0,X^0_{\gamma^*_1(u_1)}\big).
\]
That yields
\begin{equation}\label{eq:po1}
\begin{aligned}
&\E_{x,x}\Big[1_{\{\gamma^*_1(u_1)<\gamma_0\}}1_{\{\gamma^*_1(u_1)<\gamma^*_2(u_2)\}}\Big(\int_{[\gamma^*_1(u_1),\gamma_{X^{*;u_1,u_2}}]}\!\!\!\e^{-r t}\ud \bar D^{*;u_1,u_2}_t\!+\!\e^{-r\gamma_{X^{*;u_1,u_2}}}\hat v(Y^{*;u_1,u_2}_{\gamma_{X^{*;u_1,u_2}}})\Big)\Big]\\
&=\E_{x,x}\Big[1_{\{\gamma^*_1(u_1)<\gamma_0\}}1_{\{\gamma^*_1(u_1)<\gamma^*_2(u_2)\}}\e^{-r\gamma^*_1(u_1)}v_2\big(a_0,X^0_{\gamma^*_1(u_1)}\big)\Big].
\end{aligned}
\end{equation}

On the event $\{\gamma^*_2(u_2)<\gamma^*_1(u_1)\}$ the roles of the two players are reversed, in the sense that Player 2 adopts the strategy $\Phi^*_{\gamma^*_2(u_2)}$ and Player 1 uses $\Psi^*$. Continuing from \eqref{eq:Juu}, arguments analogous to the ones that yield \eqref{eq:po1} allow us to deduce
\begin{equation}\label{eq:po2}
\begin{aligned}
&\E_{x,x}\Big[1_{\{\gamma^*_2(u_2)<\gamma_0\}}1_{\{\gamma^*_1(u_1)>\gamma^*_2(u_2)\}}\int_{[\gamma^*_2(u_2),\gamma_{Y^{*;u_1,u_2}}]}\e^{-r t}\ud \bar D^{*;u_1,u_2}_t\Big]\\
&=\E_{x,x}\Big[1_{\{\gamma^*_2(u_2)<\gamma_0\}}1_{\{\gamma^*_1(u_1)>\gamma^*_2(u_2)\}}\e^{-r\gamma^*_2(u_2)}v_0\big(X^0_{\gamma^*_2(u_2)}\big)\Big],
\end{aligned}
\end{equation}
where we also use that $\gamma_{X^{*;u_1,u_2}}>\gamma_{Y^{*;u_1,u_2}}$ on the event $\{\gamma^*_1(u_1)>\gamma^*_2(u_2)\}$.

Combining \eqref{eq:po1} and \eqref{eq:po2} with \eqref{eq:Juu} and \eqref{eq:Jrand} yields
\begin{equation} \label{eq:equilibrium_payoff}
 \cJ^2_{x,x}(\bar D^*,\bar L^*) =\bar \E_{x,x}\big[1_{\{\gamma^*_2<\gamma^*_1 \wedge \gamma_0\}}\e^{-r \gamma^*_2}v_0(X^0_{\gamma^*_2})+1_{\{\gamma^*_1<\gamma^*_2 \wedge \gamma_0\}}\e^{-r \gamma^*_1}v_2(a_0, X^0_{\gamma^*_1})  \big]. 
\end{equation} 
By the same arguments we obtain an analogous expression for $\cJ^1_{x,x}(\bar L^*,\bar D^*)$. Therefore the two players' payoffs are well-defined under the strategy pair $(\Xi^*_1,\Xi^*_2)$.
\medskip 

\noindent{\bf Step 4}. In this step and in the next one we show that $\Xi^*_2$ is Player 2's best response to Player 1's playing $\Xi^*_1$ (recall that $\Xi^*_1=\Xi^*$). In particular, in this step we are going to show that 
\begin{equation}\label{eq:Vineq} 
\cJ^2_{x,x}(\Psi,\Xi^*) \leq V(x),\quad\text{for any } \Psi\in\Sigma(x),
\end{equation}
where 
\begin{equation}\label{eq:V}
V(x)=\sup_{\tau}\E_x\Big[\e^{-r\tau}v_0(X^0_\tau)(1-\Gamma^*_\tau)1_{\{\tau<\gamma_0\}}+\int_0^{\tau\wedge\gamma_0}\e^{-rt}v_2(a_0,X^0_t)\ud \Gamma^*_t\Big],
\end{equation}
with $\Gamma^*=\Gamma^{\ell^*}$ and the supremum is taken over $(\cF_{t+})$-stopping times (recall that $(\cF_t)$ is the raw Brownian filtration which is not right-continuous).
By independence of $U_1$ from $\cF_\infty$ the expected payoff in $V(x)$ can be rewritten as
\begin{equation}\label{eq:VG}
\begin{aligned}
&\E_{x,x}\Big[\e^{-r\tau}v_0(X^0_\tau)(1-\Gamma^*_\tau)1_{\{\tau<\gamma_0\}}+\int_0^{\tau\wedge\gamma_0}\e^{-rt}v_2(a_0,X^0_t)\ud \Gamma^*_t\Big]\\
&=\bar \E_{x,x}\Big[\e^{-r\tau}v_0(X^0_\tau)1_{\{\tau<\gamma^*_1\}}1_{\{\tau<\gamma_0\}}+1_{\{\gamma^*_1\le \tau\wedge\gamma_0\}}\e^{-r\gamma^*_1}v_2(a_0,X^0_{\gamma^*_1})\Big],
\end{aligned}
\end{equation}
which coincides with the right-hand side of \eqref{eq:equilibrium_payoff} with $\tau$ instead of $\gamma^*_2$. 
It is a well-known fact that the value function in \eqref{eq:V} does not change if we allow $\tau$ to be chosen from the class of randomised stopping times\footnote{This can be seen as follows: recall that a randomised stopping time $\gamma$ is a measurable function $[0,1]\ni u\mapsto\gamma(u)$ with $\gamma(u)$ a pure stopping time for each $u\in[0,1]$; for a generic (measurable) payoff process $(P_t)_{t\ge 0}$ and any randomised stopping time $\gamma$ we have $\E[P_\gamma]=\int_0^1\E[P_{\gamma(u)}]\ud u\le \sup_{\tau}\E[P_\tau]$, where the supremum is taken over pure stopping times; then $\sup_\gamma\E[P_\gamma]\le \sup_\tau\E[P_\tau]$ and since the reverse inequality is trival, we conclude.}. 
Then, a priori it must be $\cJ^2_{x,x}(\bar D^*,\bar L^*)\le V(x)$.
In order to prove \eqref{eq:Vineq}, we will work  with the filtration $\cG_t$ generated by $\cF_{t+}$ and the random variable $U_1$, so that $\gamma^*_1$ is a $(\cG_t)$-stopping time. It is well-known and it is not hard to prove that, thanks to independence of $B$ and $U_1$, $B$ is also a $(\cG_t)$-Brownian motion. 

We now argue similarly to \eqref{eq:pair} and consider an arbitrary pair $(\Xi^*_1,\Psi) \in \Theta_R(x,x)$ with $\Psi\in\Sigma(x)$. Then, there exists a pair $(\bar L^*,D)\in\cD_R(x)\times\cD_R(y)$ which does not depend on $U_2$ and such that for every $u_1 \in [0,1]$, every $t\geq 0$, every $\omega$,  the restriction on the time interval $[0,t]$ of  $(\bar L^*,D)$ is the  unique  solution of 
\begin{equation} \label{eq:defDdeviation} 
\bar L^*_s(u_1,\omega)=\Xi^*(u_1,s,B_\cdot(\omega),D_\cdot(u_1,\omega)) \,\text{ and }\, D_s(u_1,\omega)=\Psi(s,B_\cdot(\omega),\bar L^*_\cdot(u_1,\omega)),\, \forall s\in [0,t].
\end{equation}
Note that $D$ depends on $U_1$ through $\bar L^*$. The controlled processes associated to $\bar L^*$ and $D$ are denoted $X^*=X^{\bar L^*}$ and $Y^D$ respectively and the corresponding default times $\gamma_{X^*}$ and $\gamma_Y$.

The dependence of $D$ on $U_1$ is actually quite specific due to the definition of $\Xi^*$. Intuitively, if Player $2$ starts paying dividend strictly before Player 1, then the stopping time $T(D)=\inf\{t\ge 0: D_t>0\}$ at which that happens should be independent of $U_1$. In order to make this statement rigorous, let us define $\tilde D_t\coloneqq \Psi(t,B,0)$, i.e., the (non-randomised) control induced by the pure strategy $\Psi$ assuming that Player $1$ pays no dividend. 
Now we are going to show that 
\begin{equation}\label{eq:sigma_tilde_D}
 \{T(D)\geq \gamma^*_1\}=\{T(\tilde D)\geq \gamma^*_1\} \quad\text{and}\quad \{T(D)<\gamma^*_1\} \subseteq \{T(D)=T(\tilde D)\} .
\end{equation}
Note that $T(\tilde D)$ is an $(\cF_{t+})$-stopping time, which will be used below.  Note also that \eqref{eq:sigma_tilde_D} does not imply that $D$ and $\tilde D$ coincide at or after $T(D)$.

As in step 2, it must be $\bar L^*_t=0$ for $t<T(D) \wedge \gamma^*_1$. Therefore, we have $D_t=\tilde D_t=0$ for $t<T(D) \wedge \gamma^*_1$. It follows that  $\{T(D)\geq \gamma^*_1\} \subseteq \{T(\tilde D)\geq \gamma^*_1\}$.
On the event $\{T(D)<\gamma^*_1\}$, we have $D_t=\tilde D_t=0$ on $[0,T(D))$, and thus $T(\tilde D) \geq T(D)$. Assume by contradiction that 
there is $(\omega,u_1)$ such that $T(D(\omega))<\gamma^*_1(u_1,\omega)$ and $T(\tilde D(\omega)) > T(D(\omega))$. We omit the explicit dependence on $(\omega,u_1)$ for simplicity. Let $t \in (T(D), T(\tilde D)\wedge \gamma^*_1)$.  Then the pair $(0,\tilde D)$ is solution of the fixed point equation \eqref{eq:defDdeviation} for all $s\in [0,t]$ which, by uniqueness of the solution, implies $D=\tilde D$ on $[0,t]$. Hence, a contradiction. We conclude that $\{T(D)<\gamma^*_1\} \subseteq \{T(D)=T(\tilde D)\}$. 
This last inclusion together with $\{T(D)\geq \gamma^*_1\} \subset \{T(\tilde D)\geq \gamma^*_1\}$ imply $\{T(D)\geq \gamma^*_1\}=\{T(\tilde D)\geq \gamma^*_1\}$ concluding the proof of \eqref{eq:sigma_tilde_D}.

Player 2's payoff reads
\begin{equation}\label{eq:P2J}
\begin{aligned}
\cJ^2_{x,x}(\Psi,\Xi^*)&=\cJ^2_{x,x}(D,\bar L^*) = \bar\E_{x,x}\Big[ \int_{[0,{\gamma_{X^*} \wedge \gamma_Y}]} \e^{-rt} \ud D_t +1_{\{\gamma_{X^*} < \gamma_Y\}}\e^{-r \gamma_{X^*}} \hat v(Y^D_{\gamma_{X^*}}) \Big] \\
&= \bar\E_{x,x} \Big[ 1_{\{D_{\gamma^*_1-}>0\}}\int_{[0,\gamma_Y]} \e^{-rt} \ud D_t \\
&\qquad +1_{\{D_{\gamma^*_1-}=0\}}\Big( \int_{[0,{\gamma_{X^*} \wedge \gamma_Y}]} \e^{-rt} \ud D_t +1_{\{\gamma_X^* < \gamma_Y\}}\e^{-r \gamma_X^*} \hat v(Y^D_{\gamma_X^*}) \Big) \Big],
\end{aligned}
\end{equation}
where in the final equality we use that$\{D_{\gamma^*_1-}>0\}\subset\{\gamma_Y\le \gamma_{X^*}\}$ by definition of $\Xi^*$ and \eqref{eq:defDdeviation}.

We now make two claims which we will prove separately after the end of this proof, for the ease of exposition. The claims are: 
\begin{equation}\label{eq:claim1}
\begin{aligned}
&\bar\E_{x,x}\Big[1_{\{D_{\gamma^*_1-}=0\}}\Big( \int_{[0,{\gamma_{X^*} \wedge \gamma_Y}]} \e^{-rt}\ud D_t  +1_{\{\gamma_{X^*} < \gamma_Y\}}\e^{-r \gamma_{X^*}} \hat v(Y^D_{\gamma_{X^*}})  \Big)\Big]\\
&\le \bar\E_{x,x}\Big[1_{\{D_{\gamma^*_1-}=0\}}1_{\{\gamma^*_1\le \gamma_{X^*} \wedge \gamma_Y\}} \e^{-r\gamma^*_1}v_2(a_0,X^0_{\gamma^*_1})\Big].
\end{aligned}
\end{equation}
and 
\begin{equation}\label{eq:claim2}
\begin{aligned}
&\bar\E_{x,x} \Big[ 1_{\{D_{\gamma^*_1-}>0\}}\int_{[0,\gamma_Y]} \e^{-rt} \ud D_t\Big]\le \bar\E_{x,x}\Big[1_{\{D_{\gamma^*_1-}>0\}\cap\{\gamma_{Y}\ge T(D)\}}\e^{-rT(D)}v_0\big(X^0_{T(D)}\big)\Big].
\end{aligned}
\end{equation}

We substitute \eqref{eq:claim1} and \eqref{eq:claim2} into \eqref{eq:P2J}. Then we use $\{D_{\gamma^*_1-}>0\}=\{T(D)<\gamma^*_1\}$ and that $\{\gamma_Y\ge \rho\}\subset\{\gamma_0\ge \rho\}$ for any $(\cG_t)$-stopping time $\rho$, along with \eqref{eq:sigma_tilde_D}, to obtain
\begin{equation*}
\begin{aligned}
&\cJ^2_{x,x}(D,\bar L^*) \\
&\leq \bar\E_{x,x} \Big[ 1_{\{D_{\gamma^*_1-}>0\}} 1_{\{\gamma_Y \geq T(D)\}} \e^{-r T(D)}v_0\big(X^0_{T(D)}\big)+  1_{\{D_{\gamma^*_1-}=0\}} 1_{\{\gamma_{X^*}\wedge \gamma_Y \geq \gamma^*_1\}} \e^{-r \gamma^*_1}v_2\big(a_0,X^0_{\gamma^*_1}\big) \Big] \\
&\leq\bar\E_{x,x} \Big[ 1_{\{T(D)<\gamma^*_1\}} 1_{\{\gamma_0 \geq T(D)\}} \e^{-r T(D)}v_0\big(X^0_{T(D)}\big)+1_{\{T(D) \geq \gamma^*_1\}} 1_{\{\gamma_0 \geq \gamma^*_1\}} \e^{-r \gamma^*_1}v_2\big(a_0,X^0_{\gamma^*_1}\big) \Big] \\
& =\bar\E_{x,x} \Big[ 1_{\{T(\tilde D)<\gamma^*_1\}} 1_{\{\gamma_0 \geq T(\tilde D)\}} \e^{-r T(\tilde D)}v_0\big(X^0_{T(\tilde D)}\big)+1_{\{T(\tilde D) \geq \gamma^*_1\}} 1_{\{\gamma_0 \geq \gamma^*_1\}} \e^{-r \gamma^*_1}v_2\big(a_0,X^0_{\gamma^*_1}\big) \Big]\\
& \leq \sup_{\tau}\bar\E_{x,x} \Big[ 1_{\{\tau<\gamma^*_1\}} 1_{\{\gamma_0 \geq \tau\}} \e^{-r \tau}v_0(X^0_{\tau})  +1_{\{\tau \geq \gamma^*_1\}} 1_{\{\gamma_0 \geq \gamma^*_1\}} \e^{-r \gamma^*_1}v_2(a_0,X^0_{\gamma^*_1}) \Big]=V(x),
\end{aligned}
\end{equation*}
where the supremum ranges over all stopping times of the filtration $(\cF_{t+})$ and we notice that 
\begin{equation}\label{eq:silly}
1_{\{\gamma_0 = \tau\}} v_0(X^0_{\tau})=1_{\{\gamma_0 = \tau\}} v_0(0)=0.
\end{equation} 
Thus, we have established \eqref{eq:Vineq}.
\medskip

\noindent{\bf Step 5}. In this step we show that $\cJ^2_{x,x}(\bar D^*,\bar L^*)=V(x)$ so that $\Xi^*_2$ is Player 2's best response against Player 1's strategy $\Xi^*_1$ by \eqref{eq:Vineq} and Lemma \ref{lem:deviations}-(ii). Thanks to the symmetry of the set-up, that will conclude the proof of the theorem and show that $(\Xi^*_1,\Xi^*_2)$ is a Nash equilibrium.

Our observation \eqref{eq:silly} and an application of It\^o's formula yield for any stopping time $\tau$ of the filtration $(\cF_{t+})$ (see \eqref{eq:D1} for the notation)
\begin{equation*}
\begin{aligned}
&\E_{x,x}\Big[\e^{-r\tau}v_0(X^0_{\tau})\big(1-\Gamma^*_{\tau}\big)1_{\{\tau<\gamma_0\}}\!+\!\int_0^{\tau\wedge\gamma_0}\!\!\e^{-rt}v_2(a_0,X^0_t)\ud \Gamma^*_t\Big]\\
&=\E_{x,x}\Big[\e^{-r(\tau\wedge\gamma_0)}v_0(X^0_{\tau\wedge\gamma_0})\big(1-\Gamma^*_{\tau\wedge\gamma_0}\big)\!+\!\int_0^{\tau\wedge\gamma_0}\!\!\e^{-rt}v_2(a_0,X^0_t)\ud \Gamma^*_t\Big]\\
&=v_0(x)+\E_{x,x}\Big[\int_0^{\tau\wedge\gamma_0}\e^{-rs}(1-\Gamma^*_s)\big(\cA v_0\big)(X^0_s)\ud s\Big]\\
&\quad+\E_{x,x}\Big[\int_0^{\tau\wedge\gamma_0}\e^{-rs-\int_0^s\ell^*(X^0_u)\ud u}\big(v_2(a_0,X^0_s)-v_0(X^0_s)\big)\ell^*(X^0_s)\ud s\Big].
\end{aligned}
\end{equation*}
By definition of $\Gamma^*$ (see \eqref{eq:Gamma}) and \eqref{eq:Lw} we see that 
\begin{equation*}
\begin{aligned}
(1-\Gamma^*_s)\big(\cA v_0\big)(X^0_s)&=-\e^{-\int_0^s\ell^*(X^0_u)\ud u}\big[rv_0(X^0_s)-\mu_0\big]^+\\
&=-\e^{-\int_0^s\ell^*(X^0_u)\ud u}\big(v_2(a_0,X^0_s)-v_0(X^0_s)\big)\ell^*(X^0_s).
\end{aligned}
\end{equation*}
Therefore, for any $\tau$
\begin{equation}\label{eq:indif}
\E_{x,x}\Big[\e^{-r\tau}v_0(X^0_{\tau})\big(1-\Gamma^*_{\tau}\big)1_{\{\tau<\gamma_0\}}\!+\!\int_0^{\tau\wedge\gamma_0}\!\!\e^{-rt}v_2(a_0,X^0_t)\ud \Gamma^*_t\Big]=v_0(x),
\end{equation}
which yields $V(x)=v_0(x)$ for all $x\in[0,\infty)$.

Starting from \eqref{eq:equilibrium_payoff} and integrating out the randomisation device $U_2$ of Player 2, we have
\begin{equation*}
\begin{aligned}
 &\cJ^2_{x,x}(\bar D^*,\bar L^*)\\ 
 &=\int_0^1 \bar\E_{x,x}\big[1_{\{\gamma^*_2(u)<\gamma^*_1 \wedge \gamma_0\}}\e^{-r \gamma^*_2(u)}v_0(X^0_{\gamma^*_2(u)})+1_{\{\gamma^*_1<\gamma^*_2(u) \wedge \gamma_0\}}\e^{-r \gamma^*_1}v_2(a_0, X^0_{\gamma^*_1})\big]\ud u\\
 &= \int_0^1 \E_{x,x}\Big[\e^{-r\gamma^*_2(u)}v_0(X^0_{\gamma^*_2(u)})\big(1-\Gamma^*_{\gamma^*_2(u)}\big)1_{\{\gamma^*_2(u)<\gamma_0\}}\!+\!\int_0^{\gamma^*_2(u)\wedge\gamma_0}\!\!\e^{-rt}v_2(a_0,X^0_t)\ud \Gamma^*_t\Big]\ud u=v_0(x),
\end{aligned}
\end{equation*}
where the second equality is due to \eqref{eq:VG} and the final one is by \eqref{eq:indif}. 
Therefore, $\cJ^2_{x,x}(\bar D^*,\bar L^*)=V(x)=v_0(x)$, as needed.
\end{proof}
\medskip

It remains to prove the formulae in \eqref{eq:claim1} and \eqref{eq:claim2}. 

\begin{proof}[Proof of \eqref{eq:claim1} and \eqref{eq:claim2}.]
Let us start with the proof of \eqref{eq:claim1}.
First we recall that by construction $v_2(x,y)=v_2(a_0,y)$ for $x\ge a_0$. Second we recall that $X^0_{\gamma^*_1}\ge a_0=X^{\bar L^*}_{\gamma^*_1}$ on $\{D_{\gamma_1^*-}=0\}$ by definition of $\bar L^*$ and $\gamma^*_1$ (c.f.\ \eqref{eq:meas}). 
Finally we recall that $v_2(x,y)=u_2(x,y-x)$ (see \eqref{eq:u2}) and that $Z^{\bar L^*,D}_t=Z^D_t=Y^D_t-X^{\bar L^*}_t$. Then, on $\{D_{\gamma_1^*-}=0\}$, we have
$\e^{-r\gamma^*_1}v_2(a_0,Y^D_{\gamma^*_1})=\e^{-r\gamma^*_1}v_2(X^*_{\gamma^*_1},Y^D_{\gamma^*_1})=\e^{-r\gamma^*_1}u_2 (X^*_{\gamma^*_1},Z^D_{\gamma^*_1})$.

For any $(\cG_t)$-stopping time $\rho \leq \gamma_{X^*}\wedge \gamma_Z$, It\^o's formula yields (c.f.\ \eqref{eq:D1} for the notation)
\begin{equation}\label{eq:ito0}
\begin{aligned}
\e^{-r\gamma^*_1}u_2(X^*_{\gamma^*_1},Z^D_{\gamma^*_1})&=\bar\E_{x,x}\Big[\e^{-r(\gamma^*_1\vee\rho)}u_2(X^*_{\gamma^*_1\vee\rho},Z^D_{\gamma^*_1\vee\rho})-\int_{\gamma^*_1}^{\gamma^*_1\vee\rho}\e^{-rs}(\cA u_2)(X^*_s,Z^D_s)\ud s\Big|\cG_{\gamma^*_1}\Big]\\
&\quad-\bar\E_{x,x}\Big[\int_{\gamma^*_1}^{\gamma^*_1\vee\rho}\e^{-rs}\big(\partial_z u_2-\partial_x u_2\big)(X^*_s,Z^D_s)\ud \bar L^{*,c}_s\Big|\cG_{\gamma^*_1}\Big]\\
&\quad+\bar\E_{x,x}\Big[\int_{\gamma^*_1}^{\gamma^*_1\vee\rho}\e^{-rs}\partial_z u_2(X^*_s,Z^D_s)\ud D^{c}_s\Big|\cG_{\gamma^*_1}\Big]\\
&\quad-\bar\E_{x,x}\Big[\sum_{s\in(\gamma^*_1,\gamma^*_1\vee\rho]}\e^{-rs}\big(u_2(X^*_s,Z^D_s)-u_2(X^*_{s-},Z^D_{s-})\big)\Big|\cG_{\gamma^*_1}\Big],
\end{aligned}
\end{equation}
where we removed the stochastic integral, which is a $(\cG_t)$-(local)martingale (standard localisation arguments may be used if needed).

Now we recall from Step 5 in the proof of Proposition \ref{prop:SC} that $(\cA u_2)(X^*_s,Z^D_s)\le 0$ for a.e.\ $s\ge 0$, $\partial_z u_2(X^*_s,Z^D_s)\ge 1$ for all $s\ge 0$. Moreover, on the event $\{D_{\gamma^*_1-}=0\}$ we have that $\bar L^*_s=\Phi_{\gamma^*_1}(s,B)$ for $s\ge \gamma^*_1$. That implies $\bar L^*_s=\bar L^{*,c}_s$ and $\ud \bar L^*_s=1_{\{X^*_s=a_0\}}\ud \bar L^*_s$ for $s>\gamma^*_1$. Thus, for every $s>\gamma^*_1$ we have 
\[
(\partial_z u_2-\partial_x u_2)(X^*_s,Z^D_s)\ud \bar L^{*,c}_s=(\partial_z u_2-\partial_x u_2)(a_0,Z^D_s)\ud \bar L^{*,c}_s=0,
\]
and, using $\partial_z u_2\ge 1$ yields
$u_2(X^*_s,Z^D_s)-u_2(X^*_{s-},Z^D_{s-})\big)=u_2(X^*_s,Z^D_s)-u_2(X^*_{s},Z^D_{s-})\big)\le -\Delta D_s$.
Combining these observations with \eqref{eq:ito0} and rewriting $u_2$ in terms of $v_2$ leads us to 
\begin{equation}\label{eq:ito1}
\begin{aligned}
&1_{\{D_{\gamma^*_1-}=0\}}\e^{-r\gamma^*_1}v_2(a_0,Y^D_{\gamma^*_1})= 1_{\{D_{\gamma^*_1-}=0\}}\e^{-r\gamma^*_1}v_2(X^*_{\gamma^*_1},Y^D_{\gamma^*_1})\\
&\ge 1_{\{D_{\gamma^*_1-}=0\}}\bar\E_{x,x}\Big[\e^{-r(\gamma^*_1\vee\rho)}v_2(X^*_{\gamma^*_1\vee\rho},Y^D_{\gamma^*_1\vee\rho})+\int_{(\gamma^*_1,\gamma^*_1\vee\rho]}\e^{-rs}\ud D_s\Big|\cG_{\gamma^*_1}\Big].
\end{aligned}
\end{equation}

Choose $\rho=\gamma_{X^*}\wedge\gamma_Y$. On the event $\{D_{\gamma^*_1-}=0\}\cap\{\gamma^*_1<\gamma_{X^*}\wedge\gamma_Y\}$ we have 
$v_2(X^*_{\gamma_{X^*}\wedge\gamma_Y},Y^D_{\gamma_{X^*}\wedge\gamma_Y})=1_{\{\gamma_{X^*}<\gamma_Y\}}v_2(X^*_{\gamma_{X^*}\wedge\gamma_Y},Y^D_{\gamma_{X^*}\wedge\gamma_Y})=1_{\{\gamma_{X^*}<\gamma_Y\}}\hat v(Y^D_{\gamma_{X^*}})$,
where the first equality holds because $v_2(x,0)=0$ and the second one because $v_2(0,y)=\hat v(y)$. For $\bar\omega\in\{D_{\gamma^*_1-}=0\}\cap\{\gamma^*_1=\gamma_{X^*}\wedge\gamma_Y\}$ the process $D$ has no jump at $\gamma^*_1$ because of the admissibility condition \eqref{eq:def0}.
Hence it must be also $\gamma_{X^*}(\bar \omega) = \gamma_Y(\bar \omega)=\gamma_0(\bar \omega)$ and $X^*_{\gamma_0}(\bar \omega)=Y^D_{\gamma_0}(\bar \omega)=X^0_{\gamma_0}(\bar \omega)=0$. Thus, on the event $\{D_{\gamma^*_1-}=0\}\cap\{\gamma^*_1= \gamma_{X^*}\wedge\gamma_Y\}$ we have 
\begin{equation*}
v_2(X^*_{\gamma^*_1},Y^D_{\gamma^*_1})+\int_{(\gamma^*_1,\gamma^*_1]}\e^{-rs}\ud D_s=v_2(0,0)=0=\hat v(Y^D_{\gamma^*_1}),
\end{equation*} 
where for the integral we use that $(\gamma^*_1,\gamma^*_1]=\varnothing$.

Since $\{\gamma^*_1\le \gamma_{X^*}\wedge\gamma_Y\}\in\cG_{\gamma^*_1}$, combining the observation above with \eqref{eq:ito1}, we conclude that 
\begin{equation}\label{eq:ito2}
\begin{aligned}
&\bar\E_{x,x}\Big[1_{\{D_{\gamma^*_1-}=0\}\cap\{\gamma^*_1\le \gamma_{X^*}\wedge\gamma_Y\}}\e^{-r\gamma^*_1}v_2(a_0,Y^D_{\gamma^*_1})\Big]\\
&\ge \bar\E_{x,x}\Big[1_{\{D_{\gamma^*_1-}=0\}\cap\{\gamma^*_1\le \gamma_{X^*}\wedge\gamma_Y\}}\Big(\int_{(\gamma^*_1,\gamma_{X^*}\wedge\gamma_Y]}\e^{-rs}\ud D_s+1_{\{\gamma_{X^*}<\gamma_Y\}}\e^{-r\gamma_{X^*}}\hat v(Y^D_{\gamma_{X^*}})\Big)\Big].
\end{aligned}
\end{equation}
Notice that 
$v_2(a_0,Y^D_{\gamma^*_1})-v_2(a_0,Y^D_{\gamma^*_1-})\le -\Delta D_{\gamma^*_1}$,
because $\partial_y v_2=\partial_z u_2\ge 1$. Moreover, on the event $\{D_{\gamma^*_1-}=0\}\cap\{\gamma^*_1\le \gamma_{X^*}\wedge\gamma_Y\}$ 
\[
\int_{(\gamma^*_1,\gamma_{X^*}\wedge\gamma_Y]}\e^{-rs}\ud D_s+\e^{-r\gamma^*_1}\Delta D_{\gamma^*_1}=\int_{[0,\gamma_{X^*}\wedge\gamma_Y]}\e^{-rs}\ud D_s.
\]
Then, adding on both sides of \eqref{eq:ito2} the quantity $\e^{-r\gamma^*_1}\Delta D_{\gamma^*_1}$
we obtain
\begin{equation}\label{eq:ito3}
\begin{aligned}
&\bar\E_{x,x}\Big[1_{\{D_{\gamma^*_1-}=0\}\cap\{\gamma^*_1\le \gamma_{X^*}\wedge\gamma_Y\}}\e^{-r\gamma^*_1}v_2(a_0,Y^D_{\gamma^*_1-})\Big]\\
&\ge  \bar\E_{x,x}\Big[1_{\{D_{\gamma^*_1-}=0\}\cap\{\gamma^*_1\le \gamma_{X^*}\wedge\gamma_Y\}}\Big(\int_{[0,\gamma_{X^*}\wedge\gamma_Y]}\e^{-rs}\ud D_s+1_{\{\gamma_{X^*}<\gamma_Y\}}\e^{-r\gamma_{X^*}}\hat v(Y^D_{\gamma_{X^*}})\Big)\Big].
\end{aligned}
\end{equation}
Finally, we notice that
\begin{equation*}
\begin{aligned}
&1_{\{D_{\gamma^*_1-}=0\}\cap\{\gamma^*_1> \gamma_{X^*}\wedge\gamma_Y\}}\Big(\int_{[0,\gamma_{X^*}\wedge\gamma_Y]}\e^{-rs}\ud D_s+1_{\{\gamma_{X^*}<\gamma_Y\}}\e^{-r\gamma_{X^*}}\hat v(Y^D_{\gamma_{X^*}})\Big)\\
&=1_{\{D_{\gamma^*_1-}=0\}\cap\{\gamma^*_1> \gamma_{X^*}\wedge\gamma_Y\}}1_{\{\gamma_{X^*}<\gamma_Y\}}\e^{-r\gamma_{X^*}}\hat v(X^0_{\gamma_{X^*}})=0,
\end{aligned}
\end{equation*}
where the final equality uses that for every $\bar \omega\in\{D_{\gamma^*_1-}=0\}\cap\{\gamma^*_1> \gamma_{X^*}\wedge\gamma_Y\}$ we have $D_t(\bar \omega)=\bar L^*_t(\bar \omega)=0$ for all $t\in[0,\gamma_{X^*}(\bar \omega)\wedge\gamma_Y(\bar \omega)]$, hence implying that $\gamma_{X^*}(\bar \omega)=\gamma_Y(\bar \omega)=\gamma_0(\bar \omega)$\footnote{Notice that $X^*_t(\bar \omega)=Y^D_t(\bar \omega)=X^0_t(\bar \omega)$ for $t\in[0,\gamma_{X^*}(\bar \omega)\wedge\gamma_Y(\bar \omega)]$. So even if we consider the event $\{\gamma_{X^*}\le \gamma_Y\}$ instead of $\{\gamma_{X^*}< \gamma_Y\}$ we obtain $1_{\{\gamma_{X^*}\le \gamma_Y\}}\e^{-r\gamma_{X^*}}\hat v(X^0_{\gamma_{X^*}})=1_{\{\gamma_{X^*}\le \gamma_Y\}}\e^{-r\gamma_{X^*}}\hat v(0)=0$.}. 
Combining with \eqref{eq:ito3} we arrive at  
\begin{equation*}
\begin{aligned}
&\bar\E_{x,x}\Big[1_{\{D_{\gamma^*_1-}=0\}\cap\{\gamma^*_1\le \gamma_{X^*}\wedge\gamma_Y\}}\e^{-r\gamma^*_1}v_2(a_0,Y^D_{\gamma^*_1-})\Big]\\
&\ge  \bar\E_{x,x}\Big[1_{\{D_{\gamma^*_1-}=0\}}\Big(\int_{[0,\gamma_{X^*}\wedge\gamma_Y]}\e^{-rs}\ud D_s+1_{\{\gamma_{X^*}<\gamma_Y\}}\e^{-r\gamma_{X^*}}\hat v(Y^D_{\gamma_{X^*}})\Big)\Big].
\end{aligned}
\end{equation*}
The expression in \eqref{eq:claim1} is finally obtained upon noticing that on $\{D_{\gamma^*_1-}=0\}$ we have $Y^D_{\gamma^*_1-}=X^0_{\gamma^*_1}$.

Now we prove \eqref{eq:claim2}. Recall $T(D)=\inf\{t\ge 0:D_t>0\}$. For any $(\cG_t)$-stopping time $\rho \leq \gamma_Y$, It\^o's formula yields
\begin{equation}\label{eq:ito4}
\begin{aligned}
&\e^{-rT(D)} v_0(Y^D_{T(D)})\\
&= \bar\E_{x,x}\Big[\e^{-r(\rho\vee T(D))}v_0\big(Y^D_{\rho\vee T(D)}\big)- \!\int_{T(D)}^{\rho\vee T(D)}\!\!\e^{-rs} \big(\cA v_0\big)(Y^D_s) \ud s\! +\!\int_{T(D)}^{\rho\vee T(D)}\!\! \e^{-rs}\partial_y v_0(Y^D_s)\ud D^c_s\\
&\quad - \sum_{s\in(T(D), T(D)\vee \rho]}\e^{-rs}\big(v_0(Y^D_s)-v_0(Y^D_{s-})\big)\Big|\cG_{T(D)}\Big], 
\end{aligned}
\end{equation}
where we removed the stochastic integral (using standard localisation if needed).
From \eqref{eq:fbpdiv} we know that for all $s\ge 0$, $\big(\cA v_0\big)(Y^D_s) \leq 0$, $\partial_y v_0(Y^D_s)\geq 1$ and $v_0(Y^D_s)-v_0(Y^D_{s-}) \leq \int_{Y^D_{s-}}^{Y^D_s} \partial_y v_0(u)du \leq -(D_s-D_{s-})$. 
Combining these with \eqref{eq:ito4} and taking $\rho=\gamma_Y$, we obtain
\begin{equation*}
\begin{aligned}
1_{\{D_{\gamma^*_1-}>0\}\cap\{\gamma_Y>T(D)\}}\e^{-rT(D)} v_0(Y^D_{T(D)})\ge 1_{\{D_{\gamma^*_1-}>0\}\cap\{\gamma_Y>T(D)\}}\bar\E_{x,x}\Big[\int_{(T(D),\gamma_Y]}\e^{-rs}\ud D_s\Big|\cG_{T(D)}\Big].
\end{aligned}
\end{equation*}

Since $v_0(Y^D_{T(D)})-v_0(Y^D_{T(D)-})\le -\Delta D_{T(D)}$, then adding $\e^{-rT(D)}\Delta D_{T(D)}$ on both sides in the expression above and recalling that $Y^D_t(\bar \omega)=X^0_t(\bar \omega)$ for $t\in[0,T(D)(\bar \omega))$, yields
\begin{equation}\label{eq:ito5}
\begin{aligned}
1_{\{D_{\gamma^*_1-}>0\}\cap\{\gamma_Y>T(D)\}}\e^{-rT(D)} v_0(X^0_{T(D)})&\ge 1_{\{D_{\gamma^*_1-}>0\}\cap\{\gamma_Y>T(D)\}}\bar\E_{x,x}\Big[\int_{[T(D),\gamma_Y]}\e^{-rs}\ud D_s\Big|\cG_{T(D)}\Big]\\
&= 1_{\{D_{\gamma^*_1-}>0\}\cap\{\gamma_Y>T(D)\}}\bar\E_{x,x}\Big[\int_{[0,\gamma_Y]}\e^{-rs}\ud D_s\Big|\cG_{T(D)}\Big].
\end{aligned}
\end{equation}

Now we notice that on $\{\gamma_Y= T(D)\}$ we have $Y^D_{T(D)}=Y^D_{\gamma_Y}=0$ and $Y^D_{T(D)-}=X^0_{T(D)}$. It follows that $\Delta D_{T(D)}=X^0_{T(D)}$ and $v_0(Y^D_{T(D)})-v_0(Y^D_{T(D)-})=v_0(0)-v_0(X^0_{T(D)})=-v_0(X^0_{T(D)})$. 
Since $v_0'\geq 1$ on $[0,\infty)$, then
\[
-v_0(X^0_{T(D)})=v_0(Y^D_{T(D)})-v_0(Y^D_{T(D)-})=-\int_{0}^{\Delta D_{T(D)}}v'_0(Y^D_{T(D)-}-y)\ud y\le -\Delta D_{T(D)}.
\] 
We deduce that  on the event $\{D_{\gamma^*_1-}>0\}\cap\{\gamma_Y= T(D)\}$
\[
\int_{[0,\gamma_Y]}\e^{-rs}\ud D_s=e^{-r T(D)}\Delta D_{T(D)} \leq e^{-r T(D)}v_0(X^0_{T(D)}).
\]
Notice also that $\int_{[0,\gamma_Y]}\e^{-rs}\ud D_s=0$ on the event $\{D_{\gamma^*_1-}>0\}\cap\{\gamma_Y< T(D)\}$.

Taking expectation in \eqref{eq:ito5} and using these observations we can conclude
\begin{equation*}
\bar\E_{x,x}\Big[ 1_{\{ D_{\gamma^*_1-}>0\}}\int_{[0, \gamma_Y]}\e^{-rs}\ud D_s\Big] \leq \bar\E_{x,x} \Big[1_{\{ D_{\gamma^*_1-}>0\}\cap\{\gamma_Y \geq T(D)\}} \e^{-r T(D)}v_0\big(X^0_{T(D)}\big)\Big],
\end{equation*}
as claimed in \eqref{eq:claim2}.
\end{proof}

\section{Some concluding remarks.}\label{sec:conclusions}

The model we studied in this paper is certainly a very stylized formulation of a complex economic problem. The assumption that both firms' capital evolves as an aBm with the same drift and diffusion coefficients is perhaps restrictive. From an economic perspective we interpret that assumption as saying that the two firms are equally efficient, they produce the same good and they operate on the same market. On that market, customers perceive the products manufactured by the two firms as completely equivalent and so they are equally likely to buy either of the two. That determines identical profit streams for the two firms. The same type of one-dimensional model with one exogenous shock variable has already been studied for example in exit decision problems for a duopoly (see \cite{Lambrecht}, \cite{Murto}) or more generally  in two-player continuous-time nonzero-sum stopping games modeling a war of attrition with symmetric information (see \cite{DGM}).

Despite the simplifications in the model, the mathematical complexity is significant. Yet, we are able to obtain an equilibrium with {\em explicit} expressions for players' payoffs and strategies. The crucial observation in our solution method is that when we look at the problem in the $(X,Z)$-coordinates, the process $Z$ is purely controlled with no exogenous component in its dynamics (i.e., no drift or diffusion). Similar structures are well-known in single-agent, singular control problems (e.g., \cite{FP} and \cite{MZ}). When translated into free boundary problems (e.g., in Proposition \ref{prop:SC}) this feature allows us to treat the $z$-variable (almost) as a parameter. Indeed, in \eqref{eq:fbp}, the differential operator associated to the dynamics $(X,Z)$ only involves derivatives in the $x$-variable, while $z$-derivatives only appear in some boundary conditions. That enables the explicit solution of a family of ordinary differential equations (ODEs), parametrized by $z$, via the general theory of fundamental/particular solutions. 

If instead the drift coefficients in the dynamics $(X,Y)$ were different (say $\mu^X_0\neq\mu^Y_0$), then the dynamics of $Z$ would acquire a drift $\mu^Z_0\coloneqq\mu^Y_0-\mu^X_0$. Then, the differential operator in \eqref{eq:fbp} would include a term $\mu^Z_0\partial_z$ and the free boundary problem would be in the form a true partial differential equation (PDE). Similarly, if the diffusion coefficients in the dynamics $(X,Y)$ were different, or if the two processes were driven by two (possibly correlated) Brownian motions, then the dynamics of $Z$ would acquire a diffusive term. Again, at the level of the free boundary problem we would have a PDE with a differential operator including second order partial derivatives in the $x$- and $z$-coordinates. Explicit solutions of free boundary problems with PDEs are fundamentally out of reach. Even if we could prove existence of a solution to the PDE analogue of \eqref{eq:fbp}, such solution's structure would be abstract and extremely complex. Then, finding a best response map for Player 1 (i.e., the analogue of Lemma \ref{lem:Phi}) appears a phenomenally challenging task. 

The discussion in the paragraph above shows that completely different methods would be needed to solve the problem at hand when drift and/or diffusion coefficients in the dynamics $(X,Y)$ are not the same (and/or two Brownian motions drive the dynamics). An approach with variational methods would require the solution of a complicated system of free boundary problems with Neumann-type boundary conditions at the (unknown) free boundaries and Dirichlet conditions at the default boundaries $\{0\}\times(0,\infty)$ and $(0,\infty)\times\{0\}$. We are not aware of any instance of such problems being studied in the literature. Alternatively, it would be tempting to study the game with methods from Backward Stochastic Differential Equations. As far as we know, a study of BSDEs associated to games of singular controls with absorbing states (defaults) is not available.

\section{Appendix.}

In this short appendix we recall a simple useful lemma (see, e.g., \cite[Lem.\ 4.4]{PK22}).
\begin{lemma}\label{lem:BV}
Let $(\nu_t)_{t\ge 0}$ be a c\`adl\`ag process of bounded variation and let $(M_t)_{t\ge 0}$ be a continuous semimartingale. Assume there is a positive, (locally) integrable process $(m_t)_{t\ge 0}$ such that 
$\langle M\rangle_t=\int_0^t m_s\ud s$ 
and $m_t\ge \eps$ for all $t\ge 0$, for some $\eps>0$.
Then, 
$\E\left[\int_0^S 1_{\{M_s=\nu_s\}}\ud s\right]=0$ for any $S>0$.
\end{lemma}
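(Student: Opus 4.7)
The plan is to reduce the statement to the occupation-time formula applied to the càdlàg semimartingale $X_t := M_t - \nu_t$, whose continuous local-martingale part coincides with that of $M$ since $\nu$ contributes only to the bounded-variation part.

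First I would write the canonical decomposition $M = M_0 + A + N$, with $A$ continuous of bounded variation and $N$ a continuous local martingale satisfying $\langle N\rangle_t = \langle M\rangle_t = \int_0^t m_s\,\ud s$. Then $X = X_0 + (A-\nu) + N$, so $X$ is a càdlàg semimartingale whose jumps coincide with $-\Delta\nu$ and whose continuous martingale part is $N$; in particular the continuous part of its quadratic variation is $[X]^c_t = \langle N\rangle_t = \int_0^t m_s\,\ud s$.

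Second, I would invoke the occupation-time formula for general (càdlàg) semimartingales (e.g.\ Protter, Thm.~IV.70): for any Borel set $B\subset \R$,
\begin{equation*}
\int_0^S \ind_{\{X_{s-}\in B\}}\,\ud [X]^c_s \;=\; \int_B L^a_S(X)\,\ud a.
\end{equation*}
Choosing $B=\{0\}$, the right-hand side vanishes, so $\int_0^S \ind_{\{X_{s-}=0\}}\,m_s\,\ud s = 0$ almost surely. Because the set of jump times of $X$ is at most countable and $\ud\langle N\rangle_s = m_s\,\ud s$ is absolutely continuous, the indicators $\ind_{\{X_s=0\}}$ and $\ind_{\{X_{s-}=0\}}$ agree $m_s\,\ud s$-a.e., whence
\begin{equation*}
\int_0^S \ind_{\{M_s=\nu_s\}}\,m_s\,\ud s \;=\; 0, \quad \P\text{-a.s.}
\end{equation*}

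Third, I would use the uniform lower bound $m_s\ge \eps>0$ pathwise to conclude
\begin{equation*}
\eps\int_0^S \ind_{\{M_s=\nu_s\}}\,\ud s \;\le\; \int_0^S \ind_{\{M_s=\nu_s\}}\,m_s\,\ud s \;=\; 0 \quad \P\text{-a.s.},
\end{equation*}
and taking expectation yields the claim. The only delicate point is choosing the correct version of the occupation-time formula, since $\nu$ is only càdlàg (not continuous), so $X$ may have jumps; switching to the continuous part $[X]^c$ of the quadratic variation and replacing $X_s$ by $X_{s-}$ in the indicator takes care of this, and the uniform ellipticity $m_s\ge \eps$ then finishes the argument cleanly.
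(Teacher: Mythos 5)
Your proof is correct and takes essentially the same approach as the paper: set $X=M-\nu$, apply the occupation-time formula to deduce that the time $X$ spends at level zero is $[X]^c$-negligible, and then use the lower bound $m\ge\eps$. The only cosmetic difference is that you invoke the Borel-set version of the occupation-time formula directly with $B=\{0\}$ (and are a bit more explicit about the jumps of $X$ via $X_{s-}$ and $[X]^c$), whereas the paper approximates $1_{\{0\}}$ by $1_{(-\delta,\delta)}$ and lets $\delta\downarrow 0$ by dominated convergence.
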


\begin{proof}
Set $N:=M-\nu$ and let $h_\delta(z):=1_{(-\delta,\delta)}(z)$. By the occupation time formula (\cite[Thm.\ IV.45.1]{RW})
$\int_0^S h_\delta(N_s)\ud \langle N \rangle_s=\int_\R h_\delta(z)\ell^{z}_S\ud z=\int_{-\delta}^{\delta}\ell^z_S\ud z$, $\P-a.s.$
where $(\ell^z_t)_{t\ge 0}$ is the local time at $z\in\R$ of the process $N$. The left-hand side of the expression above is finite and therefore $\ell^z_S<\infty$, $\P$-a.s., for a.e.\ $z\in(-\delta,\delta)$. Letting $\delta\downarrow 0$, using the dominated convergence theorem on both sides of the expression, we obtain
\begin{equation*}
0=\int_0^S 1_{\{N_s=0\}}\ud \langle N \rangle_s=\int_0^S 1_{\{M_s=\nu_s\}}\ud \langle M \rangle_s=\int_0^S 1_{\{M_s=\nu_s\}}m_s\ud s\ge \eps\int_0^S 1_{\{M_s=\nu_s\}}\ud s,\quad \P-a.s.,
\end{equation*}
where for the second equality we recall that $\nu$ is of bounded variation. This yields the lemma.
\end{proof}

\section{Notations.}\label{sec:notation}

We summarize here the main notations for the readers' convenience.
\begin{itemize}
\item $\Omega = C_0([0,\infty))$: continuous functions $\varphi:[0,\infty)\to \R$ with $\varphi(0)=0$; 
\item $\P$ is the Wiener measure on the Borel $\sigma$-algebra of $\Omega$;
\item $D^+_0([0,\infty))$: right-continuous, non-decreasing functions $\zeta:[0,\infty)\to \R$ with $\zeta(0-)=0$;
\item ${\mathbb W}_t(\varphi,\zeta):=(\varphi(t),\zeta(t))$ for $t\in[0,\infty)$: coordinate mapping on the canonical space $C_0([0,\infty))\times D_0^+([0,\infty))$. Its raw filtration is denoted $(\cF^{\mathbb W}_t)_{t\ge 0}$.
\item $(B_t)_{t\ge 0}$: standard, $1$-dimensional Brownian motion (notice that $B_t(\omega)=\omega(t)$ for $\omega\in\Omega$); 
\item $\cF_t=\sigma(B_s,s\le t)$ is the {\em raw} Brownian filtration;
\item $\hat v(x)$, $\hat a$, $\hat \xi_t$: value function, optimal boundary and optimal control of dividend problem with drift $\hat\mu$;
\item $v_0(x)$, $a_0$, $\xi^0_t$: value function, optimal boundary and optimal control of dividend problem with drift $\mu_0$; 
\item $\Sigma_R(x)$ (resp.\ $\Sigma(x)$): randomised (resp.\ pure) strategies with state dynamics starting from $x$ at time zero;
\item $\cD_R(x)$ (resp.\ $\cD(x)$): randomised (resp.\ pure) controls with state dynamics starting from $x$ at time zero;
\item $\Theta_R(x,y)$: control-inducing pairs with state dynamics starting from $(x,y)$ at time zero;
\item $b(x)$: optimal boundary for the firm with larger initial endowment; $\alpha=b(a_0)$;
\item $H=\{(x,z)\in[0,a_0]\times\R| z\ge -x\}$, $H_\le=\{(x,z)\in H|z\le 0\}$, $H_{[0,\alpha]}=\{(x,z)\in H|z\in[0,\alpha]\}$, $H_{[\alpha,b]}=\{(x,z)\in H|\alpha\le z\le b(x)\}$;
\item $\beta_2<0<\beta_1$: solutions of $\sigma^2\beta^2+2\mu_0\beta-2r=0$;
\item $(\bar \Omega, \bar \cF, \bar \P):=(\Omega\times [0,1]\times [0,1], \cF\otimes \cB([0,1])\otimes \cB([0,1]), \P\otimes \lambda \otimes \lambda)$;
\item For $\zeta\in D^+_0([0,\infty))$ we set $T(\zeta)=\inf \{ t \geq 0 : \zeta_t >0\}$.
\end{itemize} 

\medskip

\noindent{\bf Acknowledgements}: T.\ De Angelis was partially supported by PRIN2022 (project ID: BEMMLZ) {\em Stochastic control and games and the role of information}. Part of the research was conducted while S.\ Villeneuve was visiting Collegio Carlo Alberto in Torino, under a fellowship granted by LTI@UniTO. F.\ Gensbittel acknowledges financial support from ANR (Programmes d'Investissements d'Avenir CHESS ANR-17-EURE-0010 and ANITI ANR-19-PI3A-0004).


\begin{thebibliography}{99}

\bibitem{Aumann}
\textsc{Aumann, R.J.} (1964). Mixed and Behavior Strategies in Infinite Extensive Games.  {\em Advances in Game Theory}, Annals of Mathematics Study, Vol. 52, ed. by M. Dresher, L.S. Shapley, and A.W. Tucker. Princeton: Princeton University Press, 627--650.

\bibitem{Avanzi}
\textsc{Avanzi, B.} (2009). Strategies for dividend distribution: A review. {\em North Am.\ Actuarial J.}, {\bf 13} (2), 217-251.

\bibitem{BDeA}
\textsc{Bandini, E., De Angelis, T., Ferrari, G., Gozzi, F.} (2022). Optimal dividend payout under stochastic discounting. {\em Math.\ Finance}, {\bf 32} (2), 627-677.

\bibitem{BP}
\textsc{Back, K., Paulsen, D.} (2009). Open loop equilibria and perfect competition in option exercise games. {\em Rev.\ Financ.\ Stud.} {\bf 22}, 4531–4552. 

\bibitem{BaCh} \textsc{Bather, J., Chernoff H.} (1967). Sequential decisions in the control of a spaceship. {\it Fifth Berkeley Sympos. Math. Statist. Probab.} {\bf 3}, 181-207.

\bibitem{Be} \textsc{Benes, V. E., Shepp, L. A., Witsenhausen, H. S.} (1980). Some solvable stochastic control problems. {\it Stochastics}, {\bf4}(1), 39-83.

\bibitem{Cai}
\textsc{Cai, C., De Angelis, T.} (2023). A change of variable formula with applications to multi-dimensional optimal stopping problems. {\em Stoch.\ Process.\ Appl.}, {\bf 164}, 33-61. 

\bibitem{DGM}
\textsc{D\'{e}camps, J.-P., Gensbittel F., Mariotti T.} (2022). The War of Attrition under Uncertainty: Theory and Robust Testable Implications. {\it TSE Working Papers 22-1374}, Toulouse School of Economics (TSE), revised Jun 2024.


\bibitem{DeAFe2} \textsc{De Angelis, T., Ferrari, G.} (2017). Stochastic Nonzero-Sum Games: A new Connection between Singular Control and Optimal Stopping. 
{\it Adv. Appl. Probab.} {\bf 50}(2), 347-372. 

\bibitem{DeFinetti} \textsc{De Finetti, B.} (1957). Su un'impostazione alternativa della teoria colletiva del rischio. {\em Trans. 15th Int. Congress of Actuaries}, {\bf 2}, 433-443.

\bibitem{DRV} \textsc{Dammann, F., Rodosthenous N. and Villeneuve S.} (2023). A Stochastic Non-zero sum game of controlling the debt-to-GDP ratio, TSE working paper, April 2023.

\bibitem{EL} \textsc{Ekstr\"om, E., Lindensj\"o, K.} (2023). De Finetti’s Control Problem with Competition. {\em Appl. Math. Optim.}, {\bf 87} (16). 

\bibitem{FP}
\textsc{Federico, S., Pham, H.} (2014). Characterization of the optimal boundaries in reversible investment problems. {\em SIAM J.\ Control Optim.}, {\bf 52} (4), 2180-2223.

\bibitem{Gr}
\textsc{Grenadier, S.R.} (2002). Option exercise games: an application to the equilibrium investment strategies of firms. {\em Rev.\ Financ.\ Stud.} {\bf 15}, 691–721. 

\bibitem{HWW}
\textsc{Hendricks, K., Weiss A., Wilson C.} (1988). The War of Attrition in Continuous Time with Complete Information. {\em Int. Econ. Rev.}, 29 (4), 663-680.

\bibitem{JS} \textsc{Jeanblanc-Picqu\'e, M., Shiryaev, A.N.} (1995). Optimization of the flow of dividends, {\em Russian Mathematics Surveys}, {\bf 50}, 257-277.

\bibitem{Ka} \textsc{Karatzas, I.} (1981). The monotone follower problem in stochastic decision theory. {\it Appl. Math. Optim.}, {\bf 7} (1), 175-189.

\bibitem{KS} \textsc{Karatzas, I., Shreve, S.E.} (1988). {\em Brownian motion and stochastic calculus}, Springer-Verlag, New York.

\bibitem{Kobayashi} \textsc{Kobayashi, B.} (2010). {\em The Law and Economics of Predatory Pricing}, in
Keith N. Hylton (ed.), Antitrust Law and Economics, Edward Elgar Publishing.

\bibitem{Kw} \textsc{Kwon, H.D., Zhang, H.} (2015). Game of singular stochastic control and strategic exit. {\it Math. Oper. Res.} {\bf 40}, 869-887.

\bibitem{Lambrecht}
\textsc{Lambrecht, B.} (2001). The Impact of Debt Financing on Entry and Exit in a Duopoly. {\it Rev.\ Financial Stud.} {\bf 14} (2), 765-804.

\bibitem{MZ} \textsc{Merhi, A., Zervos, M.} (2007). A model for reversible investment capacity expansion. {\em SIAM J.\ Control Optim.}, {\bf 46} (3), 839-876.

\bibitem{Murto}
\textsc{Murto, P.} (2004). Exit in Duopoly under Uncertainty. {\it The RAND Journal of Economics}, {\bf 35} (1), 111-127. 

\bibitem{Neyman}
\textsc{Neyman, A.} (2017). Continuous-time stochastic games. {\it Games Econom.\ Behav.},
{\bf 104}, 92-130.

\bibitem{PK22}
\textsc{Kwon, H.D., Palczewski, J.} (2024). Exit game with private information. {\em To appear in Math.\ Oper.\ Res.} ({\em arXiv}: 2210.01610).

\bibitem{PTZ} \textsc{Possama\"i, D., Touzi, N., Zhang, J.} (2020). Zero-sum path-dependent stochastic differential games in weak formulation. {\it Ann.\ Appl.\ Probab.}, {\bf 30} (3), 1415-1457.

\bibitem{RS} \textsc{Radner, R., Shepp, L.} (1996), Risk versus profit potential: A model for corporate strategy,
{\em J. Econ. Dyn. Control}, {\bf 20}, 1373-1393.

\bibitem{RW}
\textsc{Rogers, L.C.G., Williams, D.} {\em Diffusions, Markov processes and martingales}, 2nd Edition (Volume 2). Cambridge University Press, Cambridge, 2000.

\bibitem{Schmidli} \textsc{Schmidli, H}. {\em Stochastic Control in Insurance}. Springer-Verlag, London, 2008.

\bibitem{Steg-a}
\textsc{Steg, J.H.} (2012). Irreversible investment in oligopoly. {\em Finance Stoch.}, {\bf 16} (2), 207-224.

\bibitem{Steg}
\textsc{Steg, J.-H.} (2015). Symmetric Equilibria in Stochastic Timing Games. \em{Center for Mathematical Economics Working Paper} No. 543, Universit\"at Bielefeld.

\end{thebibliography}
\end{document}